\newtheorem{thm}{Theorem} [section]
\newtheorem{lemma}[thm]{Lemma}
\newtheorem{corollary}[thm]{Corollary}
\newtheorem{prop}[thm]{Proposition}
\theoremstyle{definition}
\newtheorem{defn}[thm]{Definition}
\newtheorem{example}[thm]{Example}
\newtheorem{ansatz}[thm]{Ansatz}
\newtheorem{remark}[thm]{Remark}
\begin{document}

\numberwithin{equation}{section}

\newcommand{\hs}{\mbox{\hspace{.4em}}}
\newcommand{\ds}{\displaystyle}
\newcommand{\bd}{\begin{displaymath}}
\newcommand{\ed}{\end{displaymath}}
\newcommand{\bcd}{\begin{CD}}
\newcommand{\ecd}{\end{CD}}

\newcommand{\on}{\operatorname}
\newcommand{\proj}{\operatorname{Proj}}
\newcommand{\bproj}{\underline{\operatorname{Proj}}}

\newcommand{\spec}{\operatorname{Spec}}
\newcommand{\Spec}{\operatorname{Spec}}
\newcommand{\bspec}{\underline{\operatorname{Spec}}}
\newcommand{\pline}{{\mathbf P} ^1}
\newcommand{\aline}{{\mathbf A} ^1}
\newcommand{\pplane}{{\mathbf P}^2}
\newcommand{\aplane}{{\mathbf A}^2}
\newcommand{\coker}{{\operatorname{coker}}}
\newcommand{\ldb}{[[}
\newcommand{\rdb}{]]}

\newcommand{\Sym}{\operatorname{Sym}^{\bullet}}
\newcommand{\Symp}{\operatorname{Sym}}
\newcommand{\Pic}{\bf{Pic}}
\newcommand{\Aut}{\operatorname{Aut}}
\newcommand{\PAut}{\operatorname{PAut}}

\newcommand{\too}{\twoheadrightarrow}
\newcommand{\C}{{\mathbf C}}
\newcommand{\bD}{{\mathbf D}}
\newcommand{\Z}{{\mathbf Z}}
\newcommand{\Q}{{\mathbf Q}}
\newcommand{\Cx}{{\mathbf C}^{\times}}
\newcommand{\Cbar}{\overline{\C}}
\newcommand{\Cxbar}{\overline{\Cx}}
\newcommand{\cA}{{\mathcal A}}
\newcommand{\cS}{{\mathcal S}}
\newcommand{\cV}{{\mathcal V}}
\newcommand{\cM}{{\mathcal M}}
\newcommand{\bA}{{\mathbf A}}
\newcommand{\cB}{{\mathcal B}}
\newcommand{\cC}{{\mathcal C}}
\newcommand{\cD}{{\mathcal D}}
\newcommand{\D}{{\mathcal D}}
\newcommand{\cs}{{\mathbf C} ^*}
\newcommand{\boldc}{{\mathbf C}}
\newcommand{\cE}{{\mathcal E}}
\newcommand{\cF}{{\mathcal F}}
\newcommand{\bF}{{\mathbf F}}
\newcommand{\cG}{{\mathcal G}}
\newcommand{\G}{{\mathbb G}}
\newcommand{\cH}{{\mathcal H}}
\newcommand{\CI}{{\mathcal I}}
\newcommand{\cJ}{{\mathcal J}}
\newcommand{\cK}{{\mathcal K}}
\newcommand{\cL}{{\mathcal L}}
\newcommand{\baL}{{\overline{\mathcal L}}}

\newcommand{\fI}{{\mathfrak I}}
\newcommand{\fJ}{{\mathfrak J}}
\newcommand{\fF}{{\mathfrak F}}
\newcommand{\Mf}{{\mathfrak M}}
\newcommand{\bM}{{\mathbf M}}
\newcommand{\bm}{{\mathbf m}}
\newcommand{\cN}{{\mathcal N}}
\newcommand{\theo}{\mathcal{O}}
\newcommand{\cP}{{\mathcal P}}
\newcommand{\cR}{{\mathcal R}}
\newcommand{\Pp}{{\mathbb P}}
\newcommand{\boldp}{{\mathbf P}}
\newcommand{\boldq}{{\mathbf Q}}
\newcommand{\bbL}{{\mathbf L}}
\newcommand{\cQ}{{\mathcal Q}}
\newcommand{\cO}{{\mathcal O}}
\newcommand{\Oo}{{\mathcal O}}
\newcommand{\cY}{{\mathcal Y}}
\newcommand{\OX}{{\Oo_X}}
\newcommand{\OY}{{\Oo_Y}}
\newcommand{\otY}{{\underset{\OY}{\ot}}}
\newcommand{\otX}{{\underset{\OX}{\ot}}}
\newcommand{\cU}{{\mathcal U}}\newcommand{\cX}{{\mathcal X}}
\newcommand{\cW}{{\mathcal W}}
\newcommand{\boldz}{{\mathbf Z}}
\newcommand{\qgr}{\operatorname{q-gr}}
\newcommand{\gr}{\operatorname{gr}}
\newcommand{\rk}{\operatorname{rk}}
\newcommand{\SH}{{\underline{\operatorname{Sh}}}}
\newcommand{\End}{\operatorname{End}}
\newcommand{\uEnd}{\underline{\operatorname{End}}}
\newcommand{\Hom}{\operatorname{Hom}}
\newcommand{\uHom}{\underline{\operatorname{Hom}}}
\newcommand{\uHomY}{\uHom_{\OY}}
\newcommand{\uHomX}{\uHom_{\OX}}
\newcommand{\Ext}{\operatorname{Ext}}
\newcommand{\bExt}{\operatorname{\bf{Ext}}}
\newcommand{\Tor}{\operatorname{Tor}}

\newcommand{\inv}{^{-1}}
\newcommand{\airtilde}{\widetilde{\hspace{.5em}}}
\newcommand{\airhat}{\widehat{\hspace{.5em}}}
\newcommand{\nt}{^{\circ}}
\newcommand{\del}{\partial}

\newcommand{\supp}{\operatorname{supp}}
\newcommand{\GK}{\operatorname{GK-dim}}
\newcommand{\hd}{\operatorname{hd}}
\newcommand{\id}{\operatorname{id}}
\newcommand{\res}{\operatorname{res}}
\newcommand{\lrar}{\leadsto}
\newcommand{\im}{\operatorname{Im}}
\newcommand{\HH}{\operatorname{H}}
\newcommand{\TF}{\operatorname{TF}}
\newcommand{\Bun}{\operatorname{Bun}}

\newcommand{\F}{\mathcal{F}}
\newcommand{\Ff}{\mathbb{F}}
\newcommand{\nthord}{^{(n)}}
\newcommand{\Gr}{{\mathfrak{Gr}}}

\newcommand{\Fr}{\operatorname{Fr}}
\newcommand{\GL}{\operatorname{GL}}
\newcommand{\gl}{\mathfrak{gl}}
\newcommand{\SL}{\operatorname{SL}}
\newcommand{\ff}{\footnote}
\newcommand{\ot}{\otimes}
\def\Ext{\operatorname {Ext}}
\def\Hom{\operatorname {Hom}}
\def\Ind{\operatorname {Ind}}
\def\bbZ{{\mathbb Z}}

\newcommand{\nc}{\newcommand}
\nc{\ol}{\overline} \nc{\cont}{\on{cont}} \nc{\rmod}{\on{mod}}
\nc{\Mtil}{\widetilde{M}} \nc{\wb}{\overline} 
\nc{\wh}{\widehat}  \nc{\mc}{\mathcal}
\nc{\mbb}{\mathbb}  \nc{\K}{{\mc K}} \nc{\Kx}{{\mc K}^{\times}}
\nc{\Ox}{{\mc O}^{\times}} \nc{\unit}{{\bf \on{unit}}}
\nc{\boxt}{\boxtimes} \nc{\xarr}{\stackrel{\rightarrow}{x}}

\nc{\Ga}{\G_a}
 \nc{\PGL}{{\on{PGL}}}
 \nc{\PU}{{\on{PU}}}

\nc{\h}{{\mathfrak h}} \nc{\kk}{{\mathfrak k}}
 \nc{\Gm}{\G_m}
\nc{\Gabar}{\wb{\G}_a} \nc{\Gmbar}{\wb{\G}_m} \nc{\Gv}{G^\vee}
\nc{\Tv}{T^\vee} \nc{\Bv}{B^\vee} \nc{\g}{{\mathfrak g}}
\nc{\gv}{{\mathfrak g}^\vee} \nc{\BRGv}{\on{Rep}\Gv}
\nc{\BRTv}{\on{Rep}T^\vee}
 \nc{\Flv}{{\mathcal B}^\vee}
 \nc{\TFlv}{T^*\Flv}
 \nc{\Fl}{{\mathfrak Fl}}
\nc{\BRR}{{\mathcal R}} \nc{\Nv}{{\mathcal{N}}^\vee}
\nc{\St}{{\mathcal St}} \nc{\ST}{{\underline{\mathcal St}}}
\nc{\Hec}{{\bf{\mathcal H}}} \nc{\Hecblock}{{\bf{\mathcal
H_{\alpha,\beta}}}} \nc{\dualHec}{{\bf{\mathcal H^\vee}}}
\nc{\dualHecblock}{{\bf{\mathcal H^\vee_{\alpha,\beta}}}}
\newcommand{\ramBun}{{\bf{Bun}}}
\newcommand{\ramBuno}{\ramBun^{\circ}}

\nc{\Buntheta}{{\bf Bun}_{\theta}} \nc{\Bunthetao}{{\bf
Bun}_{\theta}^{\circ}} \nc{\BunGR}{{\bf Bun}_{G_\BR}}
\nc{\BunGRo}{{\bf Bun}_{G_\BR}^{\circ}}
\nc{\HC}{{\mathcal{HC}}}
\nc{\risom}{\stackrel{\sim}{\to}} \nc{\Hv}{{H^\vee}}
\nc{\bS}{{\mathbf S}}
\def\BRep{\operatorname {Rep}}
\def\Conn{\operatorname {Conn}}

\nc{\Vect}{{\operatorname{Vect}}}
\nc{\Hecke}{{\operatorname{Hecke}}}

\newcommand{\ZZ}{{Z_{\bullet}}}
\nc{\HZ}{{\mc H}\ZZ} \nc{\eps}{\epsilon}

\nc{\CN}{\mathcal N} \nc{\BA}{\mathbb A}

 \nc{\BB}{\mathbb B}

\nc{\ul}{\underline}

\nc{\bn}{\mathbf n} \nc{\Sets}{{\on{Sets}}} \nc{\Top}{{\on{Top}}}
\nc{\IntHom}{{\mathcal Hom}}

\nc{\Simp}{{\mathbf \Delta}} \nc{\Simpop}{{\mathbf\Delta^\circ}}

\nc{\Cyc}{{\mathbf \Lambda}} \nc{\Cycop}{{\mathbf\Lambda^\circ}}

\nc{\Mon}{{\mathbf \Lambda^{mon}}}
\nc{\Monop}{{(\mathbf\Lambda^{mon})\circ}}

\nc{\Aff}{{\on{Aff}}} \nc{\Sch}{{\on{Sch}}}

\nc{\bul}{\bullet}
\nc{\module}{{\operatorname{-mod}}}

\nc{\dstack}{{\mathcal D}}

\nc{\BL}{{\mathbb L}}

\nc{\BD}{{\mathbb D}}

\nc{\BR}{{\mathbb R}}

\nc{\BT}{{\mathbb T}}

\nc{\SCA}{{\mc{SCA}}}
\nc{\DGA}{{\mc DGA}}

\nc{\DSt}{{DSt}}

\nc{\lotimes}{{\otimes}^{\mathbf L}}

\nc{\bs}{\backslash}

\nc{\Lhat}{\widehat{\mc L}}

\newcommand{\Coh}{\on{Coh}}

\nc{\QCoh}{\operatorname{QCoh}}
\nc{\QC}{QC}
\nc{\Perf}{\on{Perf}}
\nc{\Cat}{{\on{Cat}}}
\nc{\dgCat}{{\on{dgCat}}}
\nc{\bLa}{{\mathbf \Lambda}}

\nc{\BRHom}{\mathbf{R}\hspace{-0.15em}\on{Hom}}
\nc{\BREnd}{\mathbf{R}\hspace{-0.15em}\on{End}}
\nc{\colim}{\on{colim}}
\nc{\oo}{\infty}
\nc{\Mod}{\on{Mod} }

\nc\fh{\mathfrak h}
\nc\al{\alpha}
\nc\la{\alpha}
\nc\BGB{B\bs G/B}
\nc\QCb{QC^\flat}
\nc\qc{\mathit{qc}}

\def\w{\wedge}
\nc{\vareps}{\varepsilon}

\nc{\fg}{\mathfrak g}

\nc{\Map}{\on{Map}} \nc{\fX}{\mathfrak X}

\nc{\ch}{\check}
\nc{\fb}{\mathfrak b} \nc{\fu}{\mathfrak u} \nc{\st}{{st}}
\nc{\fU}{\mathfrak U}
\nc{\fZ}{\mathfrak Z}
\nc{\fB}{\mathfrak B}

 \nc\fc{\mathfrak c}
 \nc\fs{\mathfrak s}

\nc\fk{\mathfrak k} \nc\fp{\mathfrak p}
\nc\fq{\mathfrak q}

\nc{\BRP}{\mathbf{RP}} \nc{\rigid}{\text{rigid}}
\nc{\glob}{\text{glob}}

\nc{\cI}{\mathcal I}

\nc{\La}{\mathcal L}

\nc{\quot}{/\hspace{-.25em}/}

\nc\aff{\mathit{aff}}
\nc\BS{\mathbb S}

\nc\Loc{{\mc Loc}}
\nc\Tr{{\on{Tr}}}
\nc\Ch{{\mc Ch}}

\nc\ftr{{\mathfrak {tr}}}
\nc\fM{\mathfrak M}

\nc\Id{\operatorname{Id}}

\nc\bimod{\on{-bimod}}

\nc\ev{\operatorname{ev}}
\nc\coev{\operatorname{coev}}

\nc\pair{\operatorname{pair}}
\nc\kernel{\operatorname{kernel}}

\nc\Alg{\operatorname{Alg}}

\nc\init{\emptyset_{\text{\em init}}}
\nc\term{\emptyset_{\text{\em term}}}

\nc\Ev{\on{Ev}}
\nc\Coev{\on{Coev}}

\nc\es{\emptyset}
\nc\m{\text{\it min}}
\nc\M{\text{\it max}}
\nc\cross{\text{\it cr}}
\nc\tr{\on{tr}}

\nc\perf{\on{-perf}}
\nc\inthom{\mathcal Hom}
\nc\intend{\mathcal End}

\newcommand{\Sh}{\mathit{Sh}}

\nc{\Comod}{\on{Comod}}
\nc{\cZ}{\mathcal Z}

\def\interiorsymbol {\on{int}}

\nc\frakf{\mathfrak f}
\nc\fraki{\mathfrak i}
\nc\frakj{\mathfrak j}
\nc\BP{\mathbb P}
\nc\stab{st}
\nc\Stab{St}

\nc\fN{\mathfrak N}
\nc\fT{\mathfrak T}
\nc\fV{\mathfrak V}

\nc\Ob{\on{Ob}}

\nc\fC{\mathfrak C}
\nc\Fun{\on{Fun}}

\nc\Null{\on{Null}}

\nc\BC{\mathbb C}

\nc\loc{\on{Loc}}

\nc\hra{\hookrightarrow}
\nc\fL{\mathfrak L}
\nc\R{\mathbb R}
\nc\CE{\mathcal E}

\nc\sK{\mathsf K}
\nc\sL{\mathsf L}
\nc\sC{\mathsf C}

\nc\Cone{\mathit Cone}

\nc\fY{\mathfrak Y}
\nc\fe{\mathfrak e}
\nc\ft{\mathfrak t}

\nc\wt{\widetilde}
\nc\inj{\mathit{inj}}
\nc\surj{\mathit{surj}}

\nc\Path{\mathit{Path}}
\nc\Set{\mathit{Set}}
\nc\Fin{\mathit{Fin}}

\nc\cyc{\mathit{cyc}}

\nc\per{\mathit{per}}

\nc\sym{\mathit{symp}}
\nc\con{\mathit{cont}}
\nc\gen{\mathit{gen}}
\nc\str{\mathit{str}}
\nc\rsdl{\mathit{res}}
\nc\impr{\mathit{impr}}
\nc\rel{\mathit{rel}}
\nc\pt{\mathit{pt}}
\nc\naive{\mathit{nv}}
\nc\forget{\mathit{For}}

\nc\sH{\mathsf H}
\nc\sW{\mathsf W}
\nc\sE{\mathsf E}
\nc\sP{\mathsf P}
\nc\sB{\mathsf B}
\nc\sS{\mathsf S}
\nc\fH{\mathfrak H}
\nc\fP{\mathfrak P}
\nc\fW{\mathfrak W}
\nc\fE{\mathfrak E}
\nc\sx{\mathsf x}
\nc\sy{\mathsf y}

\nc\ord{\mathit{ord}}

\nc\sm{\mathit{sm}}

\nc\rhu{\rightharpoonup}
\nc\dirT{\mathcal T}
\nc\dirF{\mathcal F}
\nc\link{\mathit{link}}
\nc\cT{\mathcal T}

\newcommand{\ssupp}{\mathit{ss}}
\newcommand{\cyl}{\mathit{Cyl}}
\newcommand{\ball}{\mathit{B(x)}}

 \nc\ssf{\mathsf f}
 \nc\ssg{\mathsf g}
\nc\sq{\mathsf q}
 \nc\sQ{\mathsf Q}
 \nc\sR{\mathsf R}

\nc\fa{\mathfrak a}
\nc\fA{\mathfrak A}

\nc\trunc{\mathit{tr}}
\nc\pre{\mathit{pre}}
\nc\expand{\mathit{exp}}

\nc\Sol{\mathit{Sol}}
\nc\direct{\mathit{dir}}

\nc\out{\mathit{out}}
\nc\Morse{\mathit{Morse}}
\nc\arb{\mathit{arb}}
\nc\prearb{\mathit{pre}}

\nc\BZ{\mathbb Z}
\nc\proper{\mathit{prop}}
\nc\torsion{\mathit{tors}}
\nc\Perv{\mathit{Perv}}
\nc\IC{\operatorname{IC}}

\nc\Conv{\operatorname{Conv}}
\nc\Span{\operatorname{Span}}

\nc\image{\operatorname{image}}
\nc\lin{\mathit{lin}}

\nc\inverse{\operatorname{inv}}

\nc\real{\operatorname{Re}}
\nc\imag{\operatorname{Im}}

\nc\fw{\mathfrak w}

\nc\sign{\on{sgn}}
\nc\conic{\mathit{con}}

\nc\even{\mathit{ev}}
\nc\odd{\mathit{odd}}
\nc\add{\mathit{add}}

\nc\orient{\mathit{or}}

\nc\op{\mathit{op}}
\nc\sing{\mathit{sing}}
\nc\MF{\operatorname{MF}}
\nc\grade{\mathit{gr}}
\nc\periodic{\mathit{per}}

\nc\fdMod{\operatorname{fdMod}}
\nc\arbor{\mathit{arb}}

\nc\un{\diamondsuit}
\nc\BW{\mathbb W}
\nc\BH{\mathbb H}
\nc\sfW{\mathsf W}

\nc\dgcat{\on{dgcat}}
\nc\dgst{\on{dgst}}
\nc\dgSt{\on{dgSt}}

\nc\segment{\mathit{seg}}
\nc\ray{\mathit{ray}}

\nc\crit{\mathit{Cr}}
\nc\skel{\on{Skel}}

\nc\imaginary{\mathit{im}}
\nc\realabbrev{\mathit{re}}
\nc\Log{\on{Log}}

\nc\dist{\on{dist}}

\title[Wrapped microlocal sheaves on pairs of pants]{Wrapped microlocal sheaves on pairs of pants}

\author{David Nadler}
\address{Department of Mathematics\\University of California, Berkeley\\Berkeley, CA  94720-3840}
\email{nadler@math.berkeley.edu}

\begin{abstract}
Inspired by the geometry of wrapped Fukaya categories, 
we introduce the notion of wrapped microlocal sheaves.
We show that traditional microlocal sheaves are equivalent to functionals on wrapped microlocal sheaves,
in analogy with the expected relation of infinitesimal to wrapped Fukaya categories. 
As an application, we calculate 
wrapped microlocal sheaves 
on higher-dimensional pairs of pants,  confirming expectations from  mirror symmetry.  
\end{abstract}

\maketitle


\tableofcontents


\section{Introduction}

The aim of this paper is twofold: to introduce a notion of wrapped microlocal sheaves parallel to wrapped
Fukaya categories as introduced by Abouzaid-Seidel~\cite{AS} and studied by Auroux~\cite{aurouxpw}, and to establish a homological mirror symmetry equivalence for higher-dimensional pairs of pants extending
the results of 
Seidel~\cite{seidelgenustwo}, Sheridan~\cite{sheridan}, and the collaboration~\cite{AAEKO}. 
More speculatively, we are also motivated by the expectation that wrapped microlocal sheaves will offer a good model for $A$-branes
in a Betti version of Geometric Langlands.

Wrapped microlocal sheaves  arise naturally
from both  geometric and categorical considerations, and  consequently enjoy many appealing features.
Most notably,  traditional microlocal sheaves are equivalent to functionals on wrapped microlocal sheaves
(see Theorem~\ref{thm:introduality} below)
in analogy with the expected relation of infinitesimal to wrapped Fukaya categories,
or the relation of perfect complexes with compact support to coherent sheaves~\cite{BNP}.
Returning to a more classical setting, one might also keep in mind the identification of compactly-supported cohomology
with functionals on Borel-Moore homology.

To provide a collection of simple examples of the theory, we
first calculate wrapped microlocal sheaves on exact symplectic surfaces
equipped with skeleta. In particular,  in the case of punctured spheres, we obtain versions
(see Theorem~\ref{thm:introsurface} below) of the mirror symmetry results of~\cite{AAEKO}.
We then turn to our main application
and calculate wrapped microlocal sheaves  supported along natural skeleta within
higher-dimensional  pairs of pants.
We establish their equivalence 
 with the expected Landau-Ginzburg $B$-models
in their guise as matrix factorizations (see Theorem~\ref{thm:intromirror} below).
This can be viewed as a generalization, in the language of microlocal sheaves, of  results of~\cite{AAEKO}
to higher dimensions, and the results of \cite{seidelgenustwo, sheridan} from compact to arbitrary branes.
It also provides the ``opposite direction"
of homological mirror symmetry to that undertaken in~\cite{Nlg3d, Nlg}.


\subsection{Wrapped microlocal sheaves}

Roughly speaking, to an exact symplectic manifold $M$ equipped with
 a Lagrangian  skeleton $L$,
 there are associated two versions
of the Fukaya category: 
the infinitesimal Fukaya-Seidel category~\cite{Seidel} with
Lagrangian branes in $M$ running along $L$, 
and the wrapped Fukaya category~\cite{AS}
with Lagrangian branes  in $M$ transverse to $L$. 
(When the skeleton is noncompact, the wrapped  variant is often called
 partially wrapped~\cite{aurouxpw}.)

There is a broad expectation that the infinitesimal Fukaya-Seidel category may be modeled by microlocal sheaves on $M$ supported along $L$. While there is not yet a general account of such an equivalence, there is a convincing 
and rapidly growing body of evidence, originally motivated by the intersection theory of Lagrangian cycles, and including the far from exhaustive list of references~\cite{B, ccc, Nequiv, NZ, tamarkin}. (We include below a brief review of microlocal sheaves, but for further details, all roads lead to the pioneering work  of Kashiwara-Schapira~\cite{KS}.)

In this paper, we propose a notion of wrapped microlocal sheaves 
which we   can  verify in many situations similarly models
  the wrapped Fukaya category.
%
%
We  primarily focus on the traditional microlocal setting of conic open subspaces $\Omega \subset T^*Z$ of the cotangent bundle
of  a real analytic manifold $Z$.
This suffices for the study of wrapped microlocal sheaves on exact symplectic manifolds $M$
that arise from
  such conic open subspaces $\Omega \subset T^*Z$ by Hamiltonian reduction. 
  In particular, it is the approach we take to calculate 
  wrapped microlocal sheaves on
  higher-dimensional pairs of pants.
  We also calculate wrapped microlocal sheaves on exact symplectic surfaces
  equipped with skeleta
   where a more general approach
  via the gluing of local constructions  is not too involved and fits within the scope of this paper.

Throughout, we fix an algebraically closed   field $k$ of characteristic zero (though more general settings are possible).
We work with 
  $k$-linear differential graded (dg) categories,
$k$-linear differential $\BZ/2$-graded ($\BZ/2$-dg) categories,
 and derived functors, though our language may not explicitly reflect this.
For example, by a $k$-module, we will mean a dg $k$-module, and by a perfect $k$-module, we will mean a dg $k$-module with finite-dimensional cohomology.

%
%
%


Given the cotangent bundle $T^*Z$ of
of  a real analytic manifold $Z$,
let us begin by listing some  prominent features   wrapped microlocal sheaves enjoy. 
(We recommend the analogy with coherent sheaves discussed in Remark~\ref{rem:introanalogy}
as an organizing framework.)
Fix a closed conic Lagrangian subvariety $\Lambda \subset T^*Z$ which we will refer to as a  {\em support Lagrangian}.
To  a conic open subspace $\Omega \subset T^*Z$, there is a dg category $\mu\Sh^w_{\Lambda}(\Omega)$
  of wrapped microlocal sheaves on $\Omega$ supported along $\Lambda$. 
 Given an inclusion of conic open subspaces $\Omega' \subset \Omega$,
  there is a natural   corestriction functor $\mu\Sh^w_{\Lambda}(\Omega')\to \mu\Sh^w_{\Lambda}(\Omega)$.
  These assignments assemble into a cosheaf 
  $\mu\Sh^w_{\Lambda}$
 of dg categories supported along $\Lambda$. 
  There exists a stratification of $\Lambda$ such that the restriction of $\mu\Sh^w_{\Lambda}$  to each stratum is locally constant. The stalk of $\mu\Sh^w_{\Lambda}$ at a smooth point of $\Lambda$ is (not necessarily canonically) equivalent to perfect $k$-modules.
   Given a closed embedding of support Lagrangians $\Lambda\subset \Lambda'$,
  there is a natural localization $\mu\Sh_{\Lambda'} \to \mu\Sh_{\Lambda}$.

\begin{example}\label{ex:introlocsys}
For the zero-section $\Lambda = Z$ and the entire cotangent bundle $\Omega = T^*Z$, 
one can interpret $\mu\Sh^w_{Z}(T^*Z)$ as a dg category of certain locally constant sheaves. More precisely, it is  equivalent
to the dg category of perfect modules over chains on the Poincar\'e $\oo$-groupoid of~$Z$.
In particular, if $Z$ is connected, it is equivalent 
to the dg category of perfect modules over chains on the based loop space of $Z$. 
(One could compare with the parallel result for the wrapped Fukaya category proved by Abouzaid~\cite{abouzaid}.)
In contrast, the traditional dg category of local systems is  
equivalent
to the dg category of those modules over chains on the Poincar\'e $\oo$-groupoid of~$Z$ whose underlying $k$-module
is perfect.\end{example}

\begin{example}\label{ex:lgsingthimble}
In the papers~\cite{Nlg3d, Nlg}, we calculated the Landau-Ginzburg 
$A$-model $M= \BC^n, W= z_1\cdots z_n$ 
taking its branes in the form of microlocal sheaves.
In the most basic formulation, we took as support Lagrangian 
the  ``singular thimble" $L_c = \Cone(T^{n-1}) \subset M$ given by the cone over a compact torus
in a regular fiber.
We established a mirror equivalence between traditional microlocal sheaves
 on $M$ supported along $L_c$ and   perfect complexes with proper support on the $(n-2)$-dimensional pair of pants $P_{n-2}$.
  If one repeats  the arguments of ~\cite{Nlg3d, Nlg} with 
  the wrapped microlocal sheaves of this paper, one 
will arrive at a mirror equivalence
with all coherent complexes on $P_{n-2}$.
\end{example}

The essential nature of wrapped microlocal sheaves is perhaps best captured by their relation with traditional
microlocal sheaves.
To  pursue this, to a conic open subspace $\Omega \subset T^*Z$, let us denote by  $\mu\Sh^\un_{\Lambda}(\Omega)$
the cocomplete dg category 
  of {\em large microlocal sheaves} on $\Omega$ whose microstalks are arbitrary $k$-modules supported along $\Lambda$. 
Thus we require objects of $\mu\Sh^\un_{\Lambda}(\Omega)$ to be geometrically tame in the sense that their singular support lies within $\Lambda$, but we do not impose any algebraic restriction on their size.

We will focus on  two natural ways to cut out a small dg subcategory of  $\mu\Sh^\un_{\Lambda}(\Omega)$
 by imposing finiteness conditions.

First, there is the full dg subcategory 
$$
\xymatrix{
\mu\Sh_{\Lambda}(\Omega)\subset \mu\Sh^\un_{\Lambda}(\Omega)
}
$$
   of objects whose microstalks are perfect $k$-modules supported along $\Lambda$.    
   We refer to these as {\em traditional microlocal sheaves}, though more  technically
    they  might also be termed microlocally constructible. They are the version of microlocal sheaves closely related to
    the infinitesimal Fukaya-Seidel category, and can be represented locally by constructible sheaves.

Traditional microlocal sheaves enjoy  the following contravariant versions of the features of wrapped microlocal sheaves listed above.
Given an inclusion of conic open subspaces $\Omega' \subset \Omega$,
  there is a natural   restriction functor $\mu\Sh_{\Lambda}(\Omega)\to \mu\Sh_{\Lambda}(\Omega')$.
  These assignments assemble into a sheaf 
  $\mu\Sh_{\Lambda}$
 of dg categories supported along $\Lambda$. 
  There exists a stratification of $\Lambda$ such that the restriction of $\mu\Sh_{\Lambda}$  to each stratum is locally constant. The stalk of $\mu\Sh_{\Lambda}$ at a smooth point of $\Lambda$ is (not necessarily canonically) equivalent to perfect $k$-modules.
   Given a closed embedding of support Lagrangians  $\Lambda\subset \Lambda'$,
  there is a natural fully faithful embedding $\mu\Sh_{\Lambda} \to \mu\Sh_{\Lambda'}$.

Second, we can  cut out a small dg subcategory of  $\mu\Sh^\un_{\Lambda}(\Omega)$ by imposing the standard categorical finiteness of compact objects. Recall that an  object $c\in\cC$ of a stable dg category  is compact if and only if the functor it corepresents $\Hom(c, -):\cC\to \Mod_k$ preserves coproducts. 
If $\cC$ is cocomplete, in particular it contains coproducts,  
then we may recover it as the ind-category $\cC\simeq \Ind\cC_c$ of its full dg subcategory of compact objects.

\begin{defn} \label{defn:introwrapped}
The dg category of {\em wrapped microlocal sheaves} is the full dg subcategory
$$
\xymatrix{
\mu\Sh^w_{\Lambda}(\Omega)\subset \mu\Sh^\un_{\Lambda}(\Omega)
}
$$
of compact objects within the cocomplete dg category of all microlocal sheaves. 
   \end{defn}

The above definition is useful for the clean characterization it provides, but there is an equivalent
and more geometric way to approach wrapped microlocal sheaves that better illuminates their relation to the wrapped Fukaya category.
%
%
%
%

   To explain this, let us briefly recall the geometry of microstalks.
   For simplicity, let us focus on microstalks at a generic point $(z, \xi) \in \Lambda$, where $z\in Z$ and $\xi\in T^*_z Z$, 
   so that in particular $\Lambda \subset T^*Z$ is a smooth Lagrangian submanifold near to $(z, \xi)$.
   
   Choose a small open ball $B\subset Z$ around $z\in Z$, 
   and a smooth function $f:B\to \BR$ such that $f(z) = 0$, and $df_z = \xi$. The graph $L = \Gamma_{df} \subset T^*Z$ is a small Lagrangian ball centered at $(z, \xi)$. Let us assume it intersects $\Lambda \subset T^*Z$ transversely at the single point  $(z, \xi)$. 
   
   Then for $\Omega \subset T^*Z$ a conic open subspace containing $(z, \xi)$,
   the microstalk along $L\subset \Omega$ of a microlocal sheaf $\cF \in \mu\Sh_\Lambda^\un(\Omega)$ is the vanishing cycles
  $$
 \xymatrix{
 \phi_L:\mu\Sh_\Lambda^\un(\Omega)\ar[r] & \Mod_k
}
$$
$$
\xymatrix{
 \phi_L(\cF) = \Gamma_{\{f\geq 0\}}(B, \tilde \cF|_B) = \Cone( \Gamma(B, \tilde \cF) \to \Gamma(\{f<0\}, \tilde \cF|_{\{f<0}\}))[-1]
 }
 $$
where the sheaf $\tilde \cF$ represents the restriction of the microlocal sheaf $\cF$
 to a small conic open neighborhood of $(z, \xi)$.

By abstract formalism, there is an object $\cF_L \in \mu\Sh_\Lambda^\un(\Omega)$,
which we call a {\em microlocal skyscraper},
corepresenting the microstalk in the sense of a natural equivalence
$$
\xymatrix{
 \phi_L(\cF) \simeq \Hom(\cF_L, \cF) & \cF \in \mu\Sh_\Lambda^\un(\Omega)
}
$$

We have the following geometric alternative to the above categorical definition.

\begin{lemma}[Lemma~\ref{lem:splitgen} below]
The  microlocal skyscrapers $\cF_L\in \mu\Sh_\Lambda^\un(\Omega)$  form a collection of compact generators,
and thus wrapped microlocal sheaves
$$
\xymatrix{
\mu\Sh_\Lambda^w(\Omega)
\subset \mu\Sh_\Lambda^\un(\Omega)}
$$ 
 form the full dg subcategory split-generated by the microlocal skyscrapers. 
\end{lemma}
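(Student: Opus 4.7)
The statement decomposes into three claims: each microlocal skyscraper $\cF_L$ is compact in $\mu\Sh_\Lambda^\un(\Omega)$, the collection $\{\cF_L\}$ detects nonzero objects, and wrapped microlocal sheaves coincide with the split-closure of this collection. The third assertion is a formal consequence of the first two, via the standard principle that in a compactly generated stable cocomplete dg category the subcategory of compact objects equals the thick (idempotent-closed) subcategory generated by any collection of compact generators. So the real content lies in compactness and generation.

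For compactness, the plan is to exploit the explicit formula
$$\Hom(\cF_L, \cF) = \phi_L(\cF) = \Cone\bigl(\Gamma(B, \tilde\cF) \to \Gamma(\{f<0\}, \tilde\cF|_{\{f<0\}})\bigr)[-1],$$
and to show that both global-section functors appearing on the right preserve filtered colimits when restricted to the subcategory of sheaves with microsupport in $\Lambda$. The key input is that the microsupport condition confines propagation in prescribed codirections, allowing one to replace the sheaf locally on the relevant opens by a constructible model adapted to a Whitney stratification compatible with $\Lambda$; sections of constructible sheaves on contractible relatively compact opens commute with filtered colimits. Since filtered colimits in a stable cocomplete setting are generated by coproducts and sequential colimits, this suffices to conclude that each $\cF_L$ is compact.

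For generation, I would argue contrapositively: if a nonzero $\cF \in \mu\Sh_\Lambda^\un(\Omega)$ had vanishing microstalk along every Lagrangian transversal $L$ meeting $\Lambda$ at a smooth point, then the non-characteristic deformation lemma---the fundamental propagation principle in Kashiwara--Schapira microsupport theory---would force $\cF$ to vanish on all of $\Omega$. Since the smooth locus is dense in $\Lambda$, this is enough to produce some transversal $L$ with $\Hom(\cF_L, \cF) \neq 0$, so the collection $\{\cF_L\}$ jointly generates.

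The principal technical hurdle is the compactness step. The category $\mu\Sh_\Lambda^\un(\Omega)$ admits objects with arbitrarily large microstalks, and $\Gamma(B,-)$ on an unrestricted sheaf does not preserve filtered colimits. One must genuinely leverage the microsupport bound to reduce to the constructible situation; in practice I expect the cleanest implementation to pass through a presentation of $\mu\Sh_\Lambda^\un(\Omega)$ as the ind-completion of a small dg category of locally constructible models, so that compactness of the generators $\cF_L$ becomes essentially automatic once they are exhibited as objects of the small subcategory.
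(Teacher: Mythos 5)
Your decomposition into compactness, generation, and a formal split-generation deduction matches the paper's structure, and your instinct that the non-characteristic deformation principle drives the generation step is in the right spirit. However, you've made the compactness step much harder than the paper does, and you never quite close it. The paper observes that the microstalk can be written as a hyperbolic restriction $\phi_L(\cF) \simeq p_{-!}q_-^*(\cF|_B)$, so that $\phi_L$ is manifestly built from colimit-preserving functors and hence preserves coproducts; since $\Hom(\cF_L,-)\simeq \phi_L(-)$ by construction, this is precisely the statement that $\cF_L$ is compact. (The paper phrases this via "$\phi_L$ preserves coproducts, hence its left adjoint $\phi_L^\ell$ preserves compact objects, and $\cF_L = \phi_L^\ell(k)$ with $k$ compact" --- same content.) Your route through $\Gamma(B,-)$ and filtered colimits correctly flags that sections do not commute with colimits in general, but you fix on the $*$-section presentation of vanishing cycles where the issue is real, rather than switching to the dual $!$-form where it evaporates; and your fallback suggestion of exhibiting $\cF_L$ inside a small dg model whose ind-completion is $\mu\Sh^\un_\Lambda(\Omega)$ would work, but is a heavier tool and is left as an expectation rather than carried out. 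For the generation step, the paper's Lemma~\ref{lemma: microstalks} is more concrete than your appeal to non-characteristic deformation: it reduces locally (via Remark~\ref{rem:cases}) to the generic codimension-one front situation, picks the representative in $\Sh^\un_\Lambda(B)^0_*$ with vanishing global sections, and then argues directly that every stalk is an iterated extension of microstalks as one crosses walls of the front projection, so all microstalks vanishing forces the sheaf to vanish. Your version is plausible and morally the same propagation principle, but the paper's is more self-contained because it doesn't invoke the Kashiwara--Schapira deformation lemma as a black box on the large (non-constructible) category. The third step --- compact generators of $\Ind\cC_c$ split-generate $\cC_c$ --- you both treat as a formal standard fact, which it is.
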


\begin{remark}
From the above geometric characterization, we can see more clearly the relation with the wrapped Fukaya category. The microlocal skyscraper $\cF_L\in \mu\Sh_\Lambda^w(\Omega)$
corresponds to the object of the wrapped Fukaya category given by the  Lagrangian ball $L\subset \Omega$ transverse to 
 $\Lambda$ equipped with a suitable brane structure.

The morphisms between microlocal skyscrapers
involve the potentially complicated global geometry of the exact symplectic manifold
 $\Omega$
 and  support Lagrangian $\Lambda$.
This is
  in parallel with  the potentially complicated global  dynamics of the  wrapping Hamiltonian $H:\Omega\to \BR$  
  found in
  the construction of the wrapped Fukaya category. 
   \end{remark}

%

   Now let us focus on how the above versions of microlocal sheaves are related. Since 
   the full dg subcategory 
   $
\mu\Sh^w_{\Lambda}(\Omega)\subset \mu\Sh^\un_{\Lambda}(\Omega)
$
of wrapped microlocal sheaves comprises the compact objects,  we can recover all microlocal sheaves
from it  by forming the ind-category 
   $$
   \xymatrix{
   \mu\Sh^\un_\Lambda(\Omega) \simeq \Ind \mu\Sh_\Lambda^w(\Omega)
   }
   $$

   It turns out we can also recover the  small dg category
   $
\mu\Sh_{\Lambda}(\Omega)\subset \mu\Sh^\un_{\Lambda}(\Omega)
$
of traditional microlocal sheaves from wrapped microlocal sheaves (though not vice versa, see Remark~\ref{rem:introasymmetry}).
   The natural hom-pairing between
   $\mu\Sh_{\Lambda}(\Omega)$ and the opposite dg category $\mu\Sh^w_{\Lambda}(\Omega)^{op}$ lands in perfect $k$-modules.
   We prove this is in fact a perfect pairing in the following sense.
   
\begin{thm}[Theorem~\ref{thm:duality} below]\label{thm:introduality}

The natural hom-pairing provides an equivalence
$$
\xymatrix{
\mu\Sh_\Lambda(\Omega)  \ar[r]^-\sim & \Fun^{ex}(\mu\Sh_\Lambda^w(\Omega)^{op}, \Perf_k) 
}
$$
where $ \Fun^{ex}$ denotes the dg category of exact functors, and $\Perf_k$ that of perfect $k$-modules.
%
\end{thm}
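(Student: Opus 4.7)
The proof proceeds by identifying both sides as subcategories of $\mu\Sh_\Lambda^\un(\Omega)$ and matching them objectwise. The starting point is the preceding Lemma: the microlocal skyscrapers $\cF_L$ (for transverse Lagrangian balls $L$ at smooth generic points of $\Lambda$) form a collection of compact generators of $\mu\Sh_\Lambda^\un(\Omega)$ lying in $\mu\Sh_\Lambda^w(\Omega)$. Invoking the standard Yoneda theorem for compactly generated stable dg categories yields the canonical equivalence
$$\mu\Sh_\Lambda^\un(\Omega) \xrightarrow{\ \sim\ } \Fun^{ex}(\mu\Sh_\Lambda^w(\Omega)^{op}, \Mod_k), \qquad \cF \longmapsto \Hom(-, \cF)|_{\mu\Sh_\Lambda^w(\Omega)}.$$
Under this equivalence, evaluation at a microlocal skyscraper $\cF_L$ recovers the microstalk $\phi_L(\cF) \simeq \Hom(\cF_L, \cF)$.

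The task reduces to checking that $\cF \in \mu\Sh_\Lambda^\un(\Omega)$ belongs to $\mu\Sh_\Lambda(\Omega)$ if and only if the associated functor factors through $\Perf_k \subset \Mod_k$. If $\cF$ has perfect microstalks, then $\Hom(\cF_L, \cF) \simeq \phi_L(\cF)$ is perfect for every microlocal skyscraper; since $\Perf_k \subset \Mod_k$ is thick (closed under finite colimits, shifts, and retracts), and since microlocal skyscrapers split-generate $\mu\Sh_\Lambda^w(\Omega)$ by the Lemma, the functor $\Hom(-,\cF)$ then takes values in $\Perf_k$ on every compact object. Conversely, if $\Hom(-,\cF)$ lands in $\Perf_k$, then $\phi_L(\cF)$ is perfect for every generic microlocal skyscraper, so the microstalks of $\cF$ are perfect at all smooth generic points of $\Lambda$.

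The main technical step I expect is propagating perfectness of microstalks from the generic smooth locus of $\Lambda$ to its singular strata, as demanded by the definition of $\mu\Sh_\Lambda(\Omega)$. I would handle this by fixing a Whitney stratification of $\Lambda$ with respect to which $\mu\Sh_\Lambda$ is constructible (such a stratification exists by the features recorded before the theorem): along each stratum microstalks are locally constant, and the microstalk at a point of a lower-dimensional stratum is computed via the standard specialization triangles as a finite iterated extension of and retraction from microstalks at nearby points of higher-dimensional strata. Thickness of $\Perf_k$ then propagates perfectness from the open smooth locus to all of $\Lambda$, completing the identification.
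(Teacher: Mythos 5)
Your proof is correct and takes a genuinely different route from the paper's. The paper's proof proceeds by localization and arboreal singularities: it first reduces (via the sheaf/cosheaf structures) to a germ of $\Lambda$ at a point, then invokes~\cite{Nexp} to non-characteristically deform the germ to one with arboreal singularities, localizes again, and applies~\cite{Narb} to identify $\mu\Sh^\un_\Lambda$, $\mu\Sh_\Lambda$, $\mu\Sh^w_\Lambda$ locally with $\Mod_k(\cT)$, $\Perf_k(\cT)$, $\Perf_k(\cT)$ for a directed tree $\cT$, whose path algebra is smooth and proper. Your argument instead works globally: you use compact generation by microlocal skyscrapers (Lemma~\ref{lem:splitgen}) together with the standard identification $\Ind(\cD) \simeq \Fun^{ex}(\cD^{op}, \Mod_k)$ for an idempotent-complete small stable dg category $\cD$, and then match the subcategory $\mu\Sh_\Lambda(\Omega)$ with the functors landing in $\Perf_k$ via the microstalk characterization. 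Note that what you call the ``main technical step'' (propagating perfectness of microstalks into the singular strata) is precisely the content of the already-established Lemma~\ref{lemma: microstalks}, which you can simply cite instead of re-sketching via specialization triangles; once you do so, your argument closes cleanly. What your approach buys is a shorter, machinery-free argument (no localization, no arboreal deformation); what the paper's approach buys is independence from the somewhat delicate stalk-to-microstalk induction in Lemma~\ref{lemma: microstalks}, plus the sharper local structural output that microlocal sheaves are modules over directed trees, which the author exploits elsewhere. Both are valid; yours is arguably the more elementary proof given the lemmas already in hand.
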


\begin{remark}\label{rem:introanalogy}
With mirror symmetry in mind, we advocate the following informal analogy.

To the  conic open subspace $\Omega \subset T^*Z$
and support Lagrangian $\Lambda \subset T^*Z$, let us imagine assigning a variety $X_{\Lambda}(\Omega)$.
We can think of wrapped microlocal sheaves $\mu\Sh^w_\Lambda(\Omega)$ as coherent sheaves $\Coh(X_\Lambda(\Omega))$,
and thus all microlocal sheaves $\mu\Sh^{\un}_\Lambda(\Omega)$ as ind-coherent sheaves $\Ind\Coh(X_\Lambda(\Omega))$. In line with the results of~\cite{BNP}, 
 Theorem~\ref{thm:introduality} says we can  compatibly  think of traditional microlocal sheaves $\mu\Sh_\Lambda(\Omega)$
as perfect complexes with proper support $\Perf_\proper(X_\Lambda(\Omega))$. 

Let us further extend the analogy to  the natural functoriality in 
the  conic open subspace $\Omega \subset T^*Z$ and support Lagrangian  $\Lambda \subset T^*Z$.
To an open inclusion $\Omega' \subset \Omega$, and closed embedding $\Lambda\subset \Lambda'$,
we 
can think of assigning a correspondence
$$
\xymatrix{
X_{\Lambda}(\Omega) & \ar[l]_-p Y  \ar[r]^-q &  X_{\Lambda'}(\Omega')
}
$$
where $p$ is proper and Gorenstein, and $q$ is smooth.
 Then we can think of the natural functors $\mu\Sh^w_{\Lambda'}(\Omega') \to \mu\Sh^w_\Lambda(\Omega)$
and $\mu\Sh_\Lambda(\Omega)\to \mu\Sh_{\Lambda'}(\Omega')$ as corresponding respectively to  functors
$$
\xymatrix{
p_*q^*:\Coh( X_{\Lambda'}(\Omega')) \ar[r] & \Coh( X_{\Lambda}(\Omega)) 
&
q_*p^!:\Perf_\proper ( X_{\Lambda}(\Omega)) \ar[r] & \Perf_\proper( X_{\Lambda'}(\Omega')) 
}
$$
Note that these are the restrictions of adjoint functors on all ind-coherent sheaves.
 %


\end{remark}

%
%

\begin{remark}\label{rem:introasymmetry}
While objects of $\mu\Sh_\Lambda^w(\Omega)$ similarly give functionals on
$\mu\Sh_\Lambda(\Omega)^{op}$, it is not true that they produce all possible functionals. 
For example, take $Z=T^1$ to be the circle, $\Lambda = T^1$ the zero-section, and $\Omega = T^*T^1$ the entire cotangent bundle.
Then we have equivalences
 $$
 \xymatrix{
 \mu\Sh_{T^1}(T^*T^1) \simeq \Perf_\proper(\G_m)
 &
 \mu\Sh^w_{T^1}(T^*T^1) \simeq \Coh(\G_m)
 }
 $$
and by~\cite{BNP}, the hom-pairing gives an equivalence
$$
\xymatrix{
 \Perf_\proper(\G_m)  \ar[r]^-\sim & \Fun^{ex}(\Coh(\G_m)^{op}, \Perf_k) 
}
$$
compatibly with Theorem~\ref{thm:introduality}.
But clearly there are more functionals on $\Perf_\proper(\G_m)$ than those coming from $\Coh(\G_m)$ alone.
For example, one could take the hom-pairing with a direct sum of skyscraper sheaves at infinitely many points.  
\end{remark}

The proof  we give of Theorem~\ref{thm:introduality}
is an application of the theory of arboreal singularities developed in~\cite{Narb, Nexp}. Let us sketch the argument here. First, by abstract formalism, it suffices to prove the theorem locally in $\Lambda$, 
so we may focus on the germ  of $\Lambda$ at a  point.  Then 
applying~\cite{Nexp}, we may non-characteristically deform  the germ of $\Lambda$ to a nearby conic Lagrangian subvariety
$\Lambda_\arb$ with arboreal singularities. Thus microlocal sheaves along the germ are equivalent
to microlocal sheaves along $\Lambda_\arb$. 
Once again, by abstract formalism, it suffices to prove the theorem locally in $\Lambda_\arb$,
so we may focus further on the germ  of $\Lambda_\arb$ at a  point. 
 Now 
applying~\cite{Narb}, we find that large microlocal sheaves on the germ  of $\Lambda_\arb$  are equivalent to modules over  a directed tree~$\cT$.
Here the  perfect and coherent modules coincide, and form a smooth and proper dg category, 
and thus the assertion of the theorem holds.


\subsection{Mirror symmetry for pairs of pants}

Now let us turn to a concrete application in the setting of homological mirror symmetry.
We will introduce natural skeleta within  pairs of pants
and calculate wrapped microlocal sheaves supported along them. 
We can not immediately invoke the preceding theory since pairs of pants are not conic open subspaces of
cotangent
bundles. Thus we will first  pass to the symplectification
of the contactification of neighborhoods of their skeleta and identify these with  conic open subspaces of
cotangent
bundles.
Since wrapped microlocal sheaves are invariant under this modification, it provides a natural avenue to
the preceding theory.

Let $\BP^{n+1}_\BC = \proj \BC[z_0, \ldots, z_{n+1}]$ be complex projective space, and $T^{n+1}_\BC \subset \BP^{n+1}_\BC$
the  complex torus complementary to the coordinate hyperplanes $H_a = \{z_a = 0\} \subset  \BP^{n+1}_\BC$,
for $a = 0, \ldots, n+1$.

By the $n$-dimensional {\em pair of pants}, we will mean the smooth affine variety 
$$
\xymatrix{
P_{n} = \{1 +  z_1 + \cdots + z_{n+1}   = 0 \} \subset  T^{n+1}_\BC
}
$$
or equivalently but more symmetrically, the open complement of $n+2$ hyperplanes in general position in an $n$-dimensional projective space
  $$
\xymatrix{
P_n =  H \setminus (\bigcup_{a = 0}^{n+1} H \cap H_a)
&
H  = \{ z_0 + \cdots + z_{n+1} = 0\} \simeq \BP_\BC^{n}
}
$$  
Note that the symmetric group $\Sigma_{n+2}$ naturally acts on $P_n$ by permuting the homogeneous coordinates.
Since $P_n$ is a smooth affine variety, it
  is naturally a Stein manifold and hence a Liouville manifold,
and we will  be  interested in this latter structure.

Following Mikhalkin~\cite{mik}, we will work with the pair of pants 
in  a slightly modified form where we alter its embedding near infinity.
This modified form is particularly suited to gluing constructions,
and our results have natural extensions to smooth toric hypersurfaces
(see Remark~\ref{rem: symmetry} below for a brief discussion). 
By the $n$-dimensional {\em tailored pair of pants}. 
we will mean the Liouville manifold $Q_n\subset T^{n+1}_\BC$
constructed in~\cite[Proposition 4.6]{mik}
under the name ``localized" pair of pants,
 and recalled in Section~\ref{s: tailored} below.
It is isotopic to the usual pair of pants $P_n \subset T^{n+1}_\BC$,
and enjoys the same  permutation symmetries,
but near infinity the ends of~$Q_n$ have a technically useful conic structure
compatible with that of $T^{n+1}_\BC$.

The  mirror of the pair of pants is
 the Landau-Ginzburg $B$-model with background $\BA^{n+2}$ and superpotential the product of the coordinates
$$
\xymatrix{
W_{n+2} :\BA^{n+2} \ar[r] &  \BA^1
&
W_{n+2} = z_1 \cdots z_{n+2}
}
$$

We will establish a homological mirror equivalence taking the $A$-model in the form of wrapped microlocal sheaves along a natural skeleton $L_n \subset Q_n$, and the 
 Landau-Ginzburg $B$-model in its guise as matrix factorizations. 
 The matching of a distinguished compact $A$-brane on the pair of pants with the skyscraper $B$-brane at the origin of $\BA^{n+2}$  was accomplished by Sheridan~\cite{sheridan}, building on the groundbreaking case $n=1$ 
of Seidel~\cite{seidelgenustwo}. Going beyond compact $A$-branes, a mirror equivalence in the case $n=1$ 
 was a main result
of the collaboration~\cite{AAEKO}.

To construct  the skeleton $L_n \subset Q_n$,  
we will break symmetry as follows. (See Remark~\ref{rem: symmetry} below for a brief discussion of forthcoming work returning full symmetry to the picture.)
Introduce the compact torus $T^{n+1} \subset T^{n+1}_\BC$, and the moment map for its
natural  Hamiltonian translation action
$$
\xymatrix{
\Log_{n+1}:T^{n+1}_\BC \ar[r] & \BR^{n+1}
&
\Log_{n+1}(z_1, \ldots, z_{n+1}) = (\log|z_1|, \ldots, \log|z_{n+1}|)
}
$$
The amoeba $\Log_{n+1}(Q_n) \subset \BR^{n+1}$ retracts onto its tropical spine, and its complement consists
of the disjoint union of $n+2$ open contractible domains. 
Fix a point $x_\ell= (-\ell, \ldots, -\ell)\in \BR^{n+1} \setminus \Log_{n+1}(Q_n)$ sufficiently far from the amoeba,
and consider the Liouville form 
$$
\xymatrix{
\beta_{{Q_n}} =  \sum_{a = 1}^{n+1} (\xi_a + \ell) d\theta_a |_{Q_n}
}
$$
expressed in polar coordinates $z_a = e^{\xi_a + i\theta_a}$, for $a= 1, \ldots, n+1$.
The corresponding Liouville vector field is gradient-like for the Morse-Bott function
given by the squared-distance 
$$
\xymatrix{
 |\Log_{n+1} - x_\ell|^2 :Q_n \ar[r] &  \BR
}
$$

We will work with the skeleton $L_n \subset Q_n$ given by the union of stable manifolds
for the resulting Liouville flow.
All of the above constructions including the skeleton $L_n \subset Q_n$ are invariant under the subgroup $\Sigma_{n+1} \subset \Sigma_{n+2}$ permuting the coordinates of $T^{n+1}_\BC$ and fixing the $0$th  homogeneous coordinate of $\BP^{n+1}_\BC$.
To describe the geometry of a neighborhood of $L_n \subset Q_n$,
let us outline some further  constructions. 

Let $T^1_\Delta \subset T^{n+1}$  be the diagonal compact torus,
and introduce the quotient $\BT^n \simeq T^{n+1}/T^1_\Delta$.
For any $\chi \in \BR$, we have  a Hamiltonian reduction correspondence
$$
\xymatrix{
T^*T^{n+1} & \ar@{_(->}[l]_-{q_\chi} \mu^{-1}_\Delta(\chi) \ar@{->>}[r]^-{p_\chi}
&
T^*\BT^n
}
$$
where $ \mu_\Delta: T^* T^{n+1} \to \BR$ is the moment map for the natural $T^1_\Delta$-action by translations.
In particular, when $\chi= 0$, we recover the usual Hamiltonian reduction correspondence
$$
\xymatrix{
T^*T^{n+1} & \ar@{_(->}[l]_-{q} T^*_{T^1_\Delta} T^{n+1} \ar@{->>}[r]^-{p}
&
T^*\BT^n
}
$$

Introduce the conic Lagrangian subvariety
$$
\xymatrix{
\Lambda_{1} = \{(\theta, 0) \, | \, \theta\in T^1\} \cup \{(0, \xi) \, |\, \xi \in \BR_{\geq 0}\} \subset T^1 \times \BR \simeq
T^*T^1
}$$
and the product conic Lagrangian subvariety
$$
\xymatrix{
\Lambda_{n+1} = (\Lambda_1)^{n+1} \subset (T^* T^1)^{n+1} \simeq T^* T^{n+1}
}$$

For $\chi>0$, define the Lagrangian subvariety
$$
\xymatrix{
\fL_n = p_\chi(q_\chi^{-1}( \Lambda_{n+1}) \subset T^*\BT^n
}
$$

Let $\ft_n^* =\{\sum_{a=1}^{n+1} \xi_a = 0 \} \subset \BR^{n+1}$ be the dual of the Lie algebra of $\BT^n$.
Under the natural moment projection 
$$
\xymatrix{
T^* \BT^n \simeq  \BT^n \times \ft_n^* \ar[r] & \ft_n^*
}
$$
the image of $\fL_n \subset T^*\BT^n$
 is a closed simplex $\Xi_n \subset \BR^n$,
and  we have a simple combinatorial description
$$
\xymatrix{
 \fL_n = \bigcup_{I} \BT^I \times \Xi_I \subset \BT^n \times \ft_n^*
 \simeq  T^* \BT^n 
}
$$ 
where each proper subset $I \subset \{1, \ldots, n+1\}$ naturally indexes  a relatively open subsimplex $\Xi_I \subset \Xi_n$
and an orthogonal subtorus $\BT^I \subset \BT^n$.

Now we can describe the geometry of a neighborhood of the skeleton $L_n \subset Q_n$.

\begin{thm}[Theorem~\ref{thm: symplecto} below]\label{introthm: symplecto}
Fix $\chi>0$.
There is an open neighborhood $U_n \subset Q_n$ of the skeleton $L_n \subset Q_n$,
 an open neighborhood $\fU_n \subset T^*\BT^n$ of the Lagrangian subvariety $\fL_n \subset T^* \BT^n$,
and 
a symplectomorphism
$$
\xymatrix{
\mathfrak j:U_n \ar[r]^-\sim & \fU_n 
}
$$
restricting to an isomorphism
$$
\xymatrix{
\mathfrak j |_{L_n}:L_n \ar[r]^-\sim & \fL_{n}
}
$$
\end{thm}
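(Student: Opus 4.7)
The plan is to construct $\mathfrak j$ as a two-step operation on the ambient complex torus $T^{n+1}_\BC$: first, a polar coordinate identification with the cotangent bundle $T^*T^{n+1}$; second, Hamiltonian reduction by the diagonal circle $T^1_\Delta$ down to $T^*\BT^n$. The tailored structure of $Q_n$ is precisely what is needed to make the two neighborhoods match.

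First I would introduce polar coordinates $z_a = e^{\xi_a + i\theta_a}$, giving a diffeomorphism $T^{n+1}_\BC \cong T^{n+1} \times \BR^{n+1}$; under the shift $\xi_a \mapsto \xi_a + \ell$ the Liouville form $\beta_{Q_n} = \sum_a (\xi_a + \ell)\, d\theta_a |_{Q_n}$ becomes the restriction of the standard Liouville form $\sum_a \xi_a\, d\theta_a$ on $T^*T^{n+1}$. This yields a symplectic embedding of the ambient complex torus, and in particular of $Q_n$, into $T^*T^{n+1}$.

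Next I would match the skeleton $L_n$ stratum-by-stratum with $\fL_n$. By construction $L_n$ lies over the bounded portion of the tropical spine of $Q_n$ under $\Log_{n+1}$, and Mikhalkin's tailoring ensures that near $L_n$ the pair of pants $Q_n$ is a torus fibration over this spine whose fiber over the open face indexed by a proper subset $I \subset \{1, \ldots, n+1\}$ is the image in $\BT^n$ of the coordinate subtorus $\{\theta_a = 0 : a \in I\} \subset T^{n+1}$. This matches the combinatorial description of $\fL_n = \bigcup_I \BT^I \times \Xi_I$ face-by-face.

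Then I would carry out the Hamiltonian reduction. Choose $\chi > 0$ to equal the value of $\mu_\Delta(\theta, \xi) = \sum_a \xi_a$ at the bounded tropical spine, and perform a short Liouville isotopy of $Q_n$ inside $T^{n+1}_\BC$, supported near $L_n$ and preserving the skeleton, to place the portion of $Q_n$ near $L_n$ cleanly into $\mu^{-1}_\Delta(\chi)$. Applying $p_\chi: \mu_\Delta^{-1}(\chi) \twoheadrightarrow T^*\BT^n$, the skeleton $L_n$ maps isomorphically onto $\fL_n = p_\chi(q_\chi^{-1}(\Lambda_{n+1}))$, and a Weinstein neighborhood $U_n$ of $L_n$ is carried to a neighborhood $\fU_n$ of $\fL_n$, producing $\mathfrak j$.

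The main technical obstacle will be the isotopy step: arranging $Q_n$ near $L_n$ to lie in a single level set of $\mu_\Delta$. A priori $\mu_\Delta |_{Q_n} = \sum_a \log |z_a| + (n+1)\ell$ is not constant, but Mikhalkin's tailoring flattens $Q_n$ near infinity to agree with the tropical structure, on which this function is piecewise linear and can be straightened symplectically in a controlled way. Once this is arranged, invariance of Weinstein neighborhoods under Liouville isotopy of the skeleton delivers the desired symplectomorphism.
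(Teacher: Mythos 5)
Your first two steps are sound and broadly consonant with the paper: the polar coordinate identification $z_a = e^{\xi_a + i\theta_a}$ together with the shift $\xi_a \mapsto \xi_a + \ell$ does trivialize the Liouville form, and the paper also exhibits $L_n$ and $\fL_n$ face-by-face as unions of torus orbits over simplicial pieces of the respective real Lagrangians $R_n^- \subset Q_n$ and $\ft_n^* \subset T^*\BT^n$. Where the proposal breaks down is the third step.

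You propose to isotope $Q_n$ near $L_n$ into the level set $\mu_\Delta^{-1}(\chi)$ and then project by $p_\chi$. The difficulty is that $\mu_\Delta|_{L_n} = \sum_a \log|z_a| + (n+1)\ell$ is \emph{not} constant on the skeleton. Already on the top-dimensional open stratum $\Delta_\emptyset(\ell) \subset R_n^-$ the quantity $\sum_a \log|x_a|$ varies (for $n=1$, on $\{x_1 + x_2 = -1,\ x_a<0\}$ the product $x_1 x_2$ ranges over an interval), and on each torus orbit $T^I \cdot \Delta_I(\ell)$ the value of $\mu_\Delta$ still varies over the base simplex. So there is no $\chi$ equal to ``the value of $\mu_\Delta$ at the bounded tropical spine,'' and your stated requirement that the Liouville isotopy ``preserve the skeleton'' is directly inconsistent with placing $L_n$ into a single $\mu_\Delta$-level. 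Any isotopy of $Q_n$ landing its skeleton inside $\mu_\Delta^{-1}(\chi)$ must move $L_n$ to the set $\Lambda_{n+1} \cap \mu_\Delta^{-1}(\chi)$, and constructing that isotopy symplectically, with control of a neighborhood and transversality to the $T^1_\Delta$-orbits, is essentially the entire content of the theorem; the appeal to ``straightening symplectically in a controlled way'' conceals all of it. Your closing appeal to Weinstein neighborhoods is also too quick: $L_n$ is a singular Lagrangian (the faces where coordinate tori collapse), so the smooth Weinstein neighborhood theorem does not directly apply.

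The paper avoids this by never attempting to globally straighten $\mu_\Delta$. It first identifies the skeleton as $L_n = \bigcup_I T^I \cdot \Delta_I(\ell)$ via the stable-manifold analysis of the translated Liouville flow, then chooses a symplectomorphism $\mathfrak j^\circ$ between small neighborhoods of the base simplices $\Delta_n(\ell)$ and $\Xi_n(\chi)$ matching the respective Lagrangian real loci and, crucially, the \emph{local} coisotropic foliations along each boundary face (these foliations are the level sets of $\log|z_a|$, $a\in I$, respectively $\xi_a$, $a\in I$). Because these foliations are the moment maps for the subtorus actions $T^I$, $\BT^I$, the base symplectomorphism extends canonically and equivariantly over the torus orbits $T^I\cdot U_I^\circ$, $\BT^I\cdot\fU_I^\circ$ to give $\mathfrak j$. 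If you want to rescue your Hamiltonian-reduction viewpoint, you would need to replace the single global level set by this system of compatible \emph{local} coisotropic reductions indexed by faces $I$, which is exactly what the paper's construction does.
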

%
%

Now let us turn to wrapped microlocal sheaves on
the tailored pair of pants  $Q_n$ supported along the skeleton $L_n$.
Thanks to Theorem~\ref{introthm: symplecto}, we may equivalently study 
wrapped microlocal sheaves on
the cotangent bundle  $T^* \BT^n$ supported along the Lagrangian subvariety $\fL_n$.
But we  must be careful: under Theorem~\ref{introthm: symplecto}, the Liouville form on 
$Q_n$ does not match the 
canonical Liouville form on $T^*\BT^n$.
In particular,  while the skeleton $L_n$ is exact, the Lagrangian subvariety $\fL_n$ is not exact much less conic.
We must take take one further step to transport the exact symplectic geometry of $Q_n$ into that of a cotangent bundle.
To achieve this, we will take advantage of the basic fact that microlocal sheaves are invariant under the operation of
 forming the symplectification of the contactification and lifting support Lagrangians.

On the one hand,  let us form the 
 circular contactification $Q_n \times T^1$, and then its symplectification 
$\tilde Q_n = Q_n \times T^1 \times \BR$,
with their natural projections
 $$
 \xymatrix{
\tilde Q_n = Q_n \times T^1 \times \BR \ar[r]^-s &   Q_n \times T^1 \ar[r]^-c &  Q_n
 }
 $$
 The skeleton $L_n \subset Q_n$ lifts under $c$ to the Legendrian subvariety $L_n \times\{0\}\subset  Q_n\times S^1$, and we can take its inverse-image under $s$ to obtain a conic Lagrangian subvariety
  $$
  \xymatrix{
  \tilde L_n = s^{-1}(L_n \times\{0\}) \subset\tilde Q_n
  }
  $$

 On the other hand,
 introduce the conic open subspace 
 $$
 \xymatrix{
 \Omega_{n+1} = \mu_\Delta^{-1}(\BR_{>0}) = \{\sum_{a=1}^{n+1} \xi_a > 0\} \subset T^*T^{n+1}
 }$$
and recall the conic Lagrangian subvariety $\Lambda_{n+1} \subset T^* T^{n+1}$.
 
Then Theorem~\ref{introthm: symplecto} lifts to the following.

\begin{thm}[Therorem~\ref{thm: contacto} below]
Fix $\chi=n+1$.
There is a conic open neighborhood $\tilde U_n \subset \tilde Q_n$ of
the  Lagrangian subvariety  $\tilde L_n  \subset \tilde Q_n$,
 a conic open neigborhood $\Upsilon_{n+1} \subset \Omega_{n+1}$
 of
the  intersection  $\Lambda_{n+1} \cap \Omega_{n+1}$,
 and an exact symplectomorphism
$$
\xymatrix{
 \tilde{\mathfrak \jmath}:\tilde U_n\ar[r]^-\sim & \Upsilon_{n+1} 
 }
$$
restricting to an isomorphism
$$
\xymatrix{
 \tilde{\mathfrak \jmath} |_{\tilde L_n}:\tilde L_n \ar[r]^-\sim & \Lambda_{n+1} \cap \Omega_{n+1}
}
$$
\end{thm}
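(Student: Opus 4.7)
The plan is to lift the symplectomorphism $\mathfrak j : U_n \to \fU_n$ provided by Theorem~\ref{introthm: symplecto} through the symplectification--of--contactification construction, and then identify the resulting conic exact symplectic manifold on the cotangent side with an open subset of $\Omega_{n+1} \subset T^*T^{n+1}$. Because wrapped microlocal sheaves are invariant under this construction, the lifted map is exactly what is needed to import the symplectic statement into the conic cotangent-bundle setting where the earlier theory applies.

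The geometric key is the following identification, which I would establish first. Letting $\chi$ vary over $\BR_{>0}$ in the Hamiltonian reduction family $p_\chi : \mu_\Delta^{-1}(\chi) \to T^*\BT^n$ realizes $\Omega_{n+1} = \mu_\Delta^{-1}(\BR_{>0})$ as a principal $T^1_\Delta \times \BR_{>0}$-bundle over $T^*\BT^n$, in which the $T^1_\Delta$-translation supplies the circular contactification direction and the fiberwise dilation supplies the symplectification direction. Concretely, the canonical Liouville form $\sum \xi_a d\theta_a$ on $T^*T^{n+1}$ restricts at each level $\mu_\Delta = \chi$ to a $T^1_\Delta$-invariant 1-form that descends, up to rescaling by $\chi$, to the canonical Liouville form on $T^*\BT^n$; along $\Lambda_{n+1} = (\Lambda_1)^{n+1}$ the value of $\mu_\Delta = \sum \xi_a$ records precisely the $\BR_{\geq 0}$ fiber direction of the conic structure. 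This exhibits $\Omega_{n+1}$ as a conic symplectic manifold naturally identified with the symplectification of the circular contactification of $T^*\BT^n$.

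With this in hand, I would lift $\mathfrak j$ as follows. The Liouville form $\beta_{Q_n} = \sum_{a=1}^{n+1}(\xi_a + \ell)\, d\theta_a$ pulls back under $\mathfrak j^{-1}$ to a primitive on $\fU_n$ that agrees with the canonical Liouville form on $T^*\BT^n$ modulo an exact 1-form; this exact discrepancy can be absorbed by an additive translation in the contactification circle, producing a contactomorphism of a neighborhood of $L_n \times \{0\}$ in $U_n \times T^1$ onto a neighborhood of the lift of $\fL_n$ in $\fU_n \times T^1$. Symplectifying both sides then yields an exact symplectomorphism $\tilde{\mathfrak \jmath}$ of conic neighborhoods, and the identification of the previous paragraph places the target inside $\Omega_{n+1} \subset T^*T^{n+1}$. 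The specific value $\chi = n+1$ is forced by matching the shift $\ell \sum d\theta_a$ in $\beta_{Q_n}$ against the canonical conic structure, so that the lift of $\fL_n$ picks out precisely $\Lambda_{n+1} \cap \Omega_{n+1}$, each of the $n+1$ half-line branches of $\Lambda_{n+1} = (\Lambda_1)^{n+1}$ contributing one unit to $\mu_\Delta$.

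The main obstacle will be the bookkeeping of Liouville forms and moment-map normalizations: one must track how the $T^1_\Delta$-quotient and the $\BR_{>0}$-dilation interact with $\beta_{Q_n}$ and with the canonical Liouville form on $T^*\BT^n$, and pin down the scaling so that the conic structure on $\Lambda_{n+1}$ emerges correctly and the value $\chi = n+1$ falls out of the computation. Once the forms are correctly aligned, matching skeleta reduces to verifying that the diagonal $T^1_\Delta$-lift of the combinatorial decomposition $\fL_n = \bigcup_I \BT^I \times \Xi_I$ recovers the product decomposition of $\Lambda_{n+1} \cap \Omega_{n+1}$ inherited from $\Lambda_{n+1} = (\Lambda_1)^{n+1}$, which is a direct stratum-by-stratum check.
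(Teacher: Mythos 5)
Your overall strategy matches the paper: transport the Liouville forms through Theorem~\ref{thm: symplecto}, pass to the circular contactification, correct by a translation in the circle factor, and then symplectify and land inside $\Omega_{n+1}$. However, the central geometric identification in your second paragraph is not quite right, and this is where the technical content of the paper's proof lies. You claim the restriction of $\sum \xi_a\, d\theta_a$ to $\mu_\Delta^{-1}(\chi)$ ``descends, up to rescaling by $\chi$, to the canonical Liouville form on $T^*\BT^n$.'' This is false: that restriction is $T^1_\Delta$-invariant but not basic, since its contraction with the generator of the $T^1_\Delta$-action is $\mu_\Delta = \chi \ne 0$. What is true (Lemma~\ref{lem: contact cover}) is that under the map $\fp_\chi = p_\chi \times \delta : \mu_\Delta^{-1}(\chi) \to T^*\BT^n \times T^1$, the contact form $dt + \alpha_{T^*\BT^n}$ on the circular contactification pulls back to the restricted Liouville form $\sum \xi_a\, d\theta_a|_{\mu_\Delta^{-1}(\chi)}$, precisely when $\hat\chi = 1$, i.e.\ $\chi = n+1$. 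So $\chi = n+1$ arises from this computation with the diagonal character $\delta$, not, as you suggest, from matching the $\ell \sum d\theta_a$ shift in $\beta_{Q_n}$.

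There is a second, related gap: $\fp_\chi$ is a $\BZ/(n+1)\BZ$-covering, not an isomorphism (the restriction $\delta|_{T^1_\Delta}$ has degree $n+1$). So being a principal $T^1_\Delta\times\BR_{>0}$-bundle over $T^*\BT^n$ does not by itself exhibit $\Omega_{n+1}$ as the symplectification of the circular contactification of $T^*\BT^n$; one must additionally show that the cover trivializes over a neighborhood of the skeleton, and choose a section matching $\cL_n$ with $\Lambda_{n+1}^\oo$, which is the second half of Lemma~\ref{lem: contact cover}. Finally, a minor imprecision: the discrepancy $\gamma_n = \alpha_{T^*\BT^n}|_{\fU_n} - \beta_n$ is closed with integral periods (integral, not exact) --- this is exactly why one must use the circular rather than the straight contactification, and it deserves to be stated since it is checked by restricting to the skeleton $\fL_n$ and using that the inclusion $\fL_n \subset \fU_n$ is a homotopy equivalence.
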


Now we can access wrapped microlocal  sheaves 
by passing to the conic open subspace $\Upsilon_{n+1}$
and support Lagrangian $\Lambda_{n+1}$. Note that the precise shape of $\Upsilon_{n+1}$
is not important, and we may indeed replace it with the more explicit $\Omega_{n+1}$.
Since wrapped microlocal sheaves along  $\Lambda_{n+1}$
form a cosheaf  along  $\Lambda_{n+1}$, and $\Upsilon_{n+1}$
is a conic open neighborhood of  $\Lambda_{n+1} \cap \Omega_{n+1}$,
their evaluation over $\Upsilon_{n+1}$
is equal to their evaluation over $\Omega_{n+1}$.
 
 \begin{ansatz}\label{pofpansatz}
  Set the dg category $\mu\Sh_{L_n}^w(Q_n)$ of wrapped microlocal sheaves on the 
  tailored pair of pants $Q_n$
supported along the skeleton $L_n$ to be  that of wrapped
microlocal sheaves on $\Omega_{n+1}$ supported  along
 $\Lambda_{n+1}$
in the sense of Definition~\ref{defn:introwrapped}.
\end{ansatz}

\begin{remark}
Following established  patterns in the subject (for example~\cite{kquant, KSdq, PS}), 
we expect it is possible  to develop a general definition of
a $\BZ/2$-dg category of  wrapped microlocal sheaves
on an exact symplectic manifold $M$ supported along  an exact Lagrangian $L$.
Roughly speaking, one should pass to the symplectification of the contactification of $M$,
choose local identifications of it with conic open subspaces of cotangent bundles, and
then glue together local dg categories of wrapped microlocal sheaves
using  the theory of contact transformations.  
(It is possible the technical demands of such a construction are already available in the literature, but  we have not
attempted to thoroughly understand the state of the art.)
 
 In the relatively simple case of symplectic surfaces, we directly check the required invariance 
 of wrapped microlocal sheaves under such choices of   local identifications and thus establish a general definition
 (see Section~\ref{s:introsurf} below). 
In higher dimensions, 
we will not pursue the details of a general definition here, but 
focus on the situation of Ansatz~\ref{pofpansatz} where 
 the symplectification of the contactification of $M$ is itself a conic open subspace $\Omega$ of a cotangent bundle.
 In this case, any general definition of wrapped microlocal sheaves on $M$ will agree with wrapped microlocal sheaves on
  $\Omega$ in the sense of Definition~\ref{defn:introwrapped}.
   (For a basic example of this consistency, see Remark~\ref{rem:unambiguous} below.)
\end{remark}

Finally, to calculate the dg category $\mu\Sh_{L_n}^w(Q_n)$ of wrapped microlocal sheaves, we proceed as follows. We begin by verifying the elementary mirror equivalence 
$$
\xymatrix{
\mu\Sh^w_{\Lambda_1}(T^* T^1) \simeq \Coh(\BA^1)
}
$$
along with straightforward  analogues of it in higher dimensions.
Here  already is an instance where the wrapped theory,
as opposed to the traditional,
can be appreciated for providing an object mirror to the structure sheaf of the non-proper variety $\BA^1$.
Then we invoke that wrapped microlocal sheaves form a cosheaf to reduce their global calculation
to the colimit of a diagram of such building blocks. Finally, we match this diagram with a natural descent diagram 
whose colimit calculates matrix factorizations.

We arrive at the anticipated mirror equivalence. 

\begin{thm}[Corollary~\ref{cor:mirrorsym} below]\label{thm:intromirror}
There is an  equivalence of  $\BZ/2$-dg categories
$$
\xymatrix{
\mu\Sh_{L_n}^w(Q_n)_{\BZ/2} \simeq \MF(\BA^{n+2}, W_{n+2})
}
$$
between  the underlying  $\BZ/2$-dg category
of wrapped microlocal sheaves on the tailored pair of pants $Q_n$ supported along the 
 skeleton $L_n$,
and  that of matrix factorizations for the superpotential  $W_{n+2} = z_1\cdots z_{n+2} :\BA^{n+2}\to \BA^1$.
\end{thm}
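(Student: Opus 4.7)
The plan is to execute the four-step strategy sketched in the paragraph preceding the theorem. First, apply Ansatz~\ref{pofpansatz} to replace $\mu\Sh^w_{L_n}(Q_n)$ with $\mu\Sh^w_{\Lambda_{n+1}}(\Omega_{n+1})$, which now lies inside a cotangent bundle and hence falls under the general framework set up earlier in the paper, including Theorem~\ref{thm:introduality} and the cosheaf property of $\mu\Sh^w$.

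Next, I would establish the local mirror equivalence $\mu\Sh^w_{\Lambda_1}(T^*T^1) \simeq \Coh(\BA^1)$. Since $\Lambda_1$ is the zero-section joined to a positive half of a single cotangent fiber, one first obtains $\mu\Sh^w_{T^1}(T^*T^1) \simeq \Coh(\Gm)$ as in Example~\ref{ex:introlocsys} and Remark~\ref{rem:introasymmetry}: this is the loop-space calculation using $\pi_1(T^1)=\BZ$ and $C_*(\Omega T^1)\simeq k[t,t^{\pm 1}]$. Adjoining the positive cotangent ray introduces one additional compact microlocal skyscraper supported on the ray, whose inclusion corepresents the restriction $\Coh(\BA^1)\to\Coh(\Gm)$; by the framework of Remark~\ref{rem:introanalogy}, the enlargement of support $T^1\subset\Lambda_1$ corresponds precisely to the open inclusion $\Gm\subset\BA^1$. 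The product structure $\Lambda_{n+1}=(\Lambda_1)^{n+1}$ then yields analogous local descriptions in higher dimensions by Künneth.

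To globalize, I would invoke that $\mu\Sh^w_{\Lambda_{n+1}}$ is a cosheaf along $\Lambda_{n+1}$ to express $\mu\Sh^w_{\Lambda_{n+1}}(\Omega_{n+1})$ as a colimit over a stratification of $\Lambda_{n+1}\cap\Omega_{n+1}$. The combinatorial model $\fL_n=\bigcup_I \BT^I\times\Xi_I$ with $I\subsetneq\{1,\ldots,n+1\}$ lifts to such a stratification; by the previous step each stratum contributes $\Coh$ of a product of $\BA^1$ and $\Gm$ factors, with face maps given by restriction along coordinate hyperplanes. In parallel, $\MF(\BA^{n+2},W_{n+2})$ admits a classical descent presentation as a colimit indexed by subsets $J\subset\{1,\ldots,n+2\}$ governed by the stratification of the critical locus of $W_{n+2}$ by coordinate subspaces, each local piece reducing to coherent sheaves on an affine space with localizations along coordinate hyperplanes. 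After $\BZ/2$-folding on the microlocal side to match the intrinsic $\BZ/2$-grading of $\MF$, the desired equivalence is obtained by identifying these two diagrams.

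The main obstacle is this last matching of descent diagrams. The microlocal construction manifests only the $\Sigma_{n+1}$-action permuting the coordinates of $T^{n+1}$, whereas $\MF(\BA^{n+2},W_{n+2})$ carries the full $\Sigma_{n+2}$-symmetry, so the last coordinate of $\BA^{n+2}$ must emerge from the Hamiltonian reduction direction $\mu_\Delta$ together with the constraint $\sum\xi_a>0$ cutting out $\Omega_{n+1}$. Pinning down this hidden symmetry, and confirming that the local identifications commute with all restriction and localization functors up to the delicate signs introduced by $\BZ/2$-folding, will be the bulk of the argument; the invariance statements under arboreal deformation underlying Theorem~\ref{thm:introduality} provide the flexibility needed to reorganize the diagram on the microlocal side into the form dictated by the mirror.
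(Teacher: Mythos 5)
Your overall plan is in the right spirit—invoke the Ansatz, compute locally, globalize by the (co)sheaf property, and match descent diagrams—and your local calculation $\mu\Sh^w_{\Lambda_1}(T^*T^1)\simeq\Coh(\BA^1)$ is the correct building block (the paper establishes it via the Kronecker quiver model $\Sh^\un_{\cS_1}(T^1)\simeq\QCoh(\BP^1)$ and the identification of the support condition with inverting one quiver map, which is tidier than your loop-space-plus-skyscraper heuristic but gives the same answer).

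However, the final step as you have outlined it contains a genuine mismatch that you yourself flag but do not resolve. You propose to present $\MF(\BA^{n+2},W_{n+2})$ as a colimit indexed by subsets of $[n+2]$ and match that against the cosheaf diagram for $\mu\Sh^w_{\Lambda_{n+1}}(\Omega_{n+1})$, whose natural index is proper subsets of $[n+1]$ (the factors of $T^{n+1}$). These index categories do not agree, and there is no descent presentation of $\MF(\BA^{n+2},W_{n+2})$ over $[n+2]$ in the paper or in the standard literature that would plug in here. The paper dissolves this asymmetry by postponing the passage to matrix factorizations: the cosheaf diagram for $\mu\Sh^w_{\Lambda_{n+1}}$ over the conic cover $\Omega_I = \Omega_{n+1}\cap\{\xi_a\neq 0,\ a\notin I\}$, $I\subsetneq[n+1]$, is matched term-by-term (via hyperbolic restrictions $\eta_I$ and pushforwards $j_{I*}$ along $T^I\hookrightarrow T^{n+1}$) with the descent diagram for $\Coh(X_n)$ over coordinate subspaces $\BA^I\subset\BA^{n+1}$, both indexed by $\cI^\circ_{n+1}$; this gives the $\BZ$-graded equivalence $\mu\Sh^w_{\Lambda_{n+1}}(\Omega_{n+1})\simeq\Coh(X_n)$ before any folding. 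Only afterwards is $\BZ/2$-folding applied and combined with a purely algebraic statement (Proposition~\ref{prop:mfcohequiv}, which rests on Orlov's equivalence together with the coordinate projection $\pi: X_{n+1}\to X_n$) to obtain $\Coh(X_n)_{\BZ/2}\simeq\MF(\BA^{n+2},W_{n+2})$; the "extra" $(n+2)$-th coordinate arises there as the degree-$2$ generator $u$ (cf.~Proposition~\ref{prop:cohhoms}). So the resolution of your ``main obstacle'' is not to confront the hidden $\Sigma_{n+2}$-symmetry geometrically, but to factor the theorem through $\Coh(X_n)$ and relegate the appearance of $z_{n+2}$ to the $\BZ/2$-folding step.

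One further technical point: you suggest computing the colimit over a stratification of $\Lambda_{n+1}\cap\Omega_{n+1}$, but the paper works with the limit of the \emph{sheaf} $\mu\Sh^\un_{\Lambda_{n+1}}$ over the conic open cover $\{\Omega_I\}$, identifies each term with $\Sh^\un_{\Lambda_I}(T^I)\simeq\QCoh(\BA^I)$ by exhibiting an explicit inverse $j_{I*}$ to the hyperbolic restriction, and only then passes to compact objects to get the cosheaf colimit. Your K\"unneth heuristic does not by itself supply the equivalence $\mu\Sh^\un_{\Lambda_I}(\Omega_I)\simeq\Sh^\un_{\Lambda_I}(T^I)$, which is the place where the change from $T^{n+1}$ to $T^I$ is actually justified.
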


\begin{remark}\label{rem: symmetry}
The invariance of microlocal sheaves under mutations of support Lagrangians
is an important general question.
Let us briefly mention how it plays a prominent role in gluing together microlocal sheaves on copies of $Q_n$  into
microlocal sheaves on smooth toric hypersurfaces.
First, by applying elements of the symmetric group $\Sigma_{n+2}$ to the initial skeleton~$L_n$, one obtains  a collection of alternative skeleta with
analogous structure. Each of these skeleta is  well suited to gluings of $Q_n$ in different directions along its ends.  
To pursue such gluings  simultaneously, we must understand an interpolating family of  support Lagrangians,
and verify the invariance of  microlocal sheaves  as we move in the family.

In forthcoming work~\cite{Nperm}, we construct such a family and verify the sought-after invariance.
The symmetric group $\Sigma_{n+2}$ naturally acts on the family,
and in particular preserves its   central member $L_n^0$.
As a  topological space, we can obtain $L_n^0$ from the boundary of the $(n+1)$-dimensional permutohedron  by the equivalence relation
determined by its tessellation of $\BR^{n+1}$. For example, for $n=1$, it is obtained by 
from the boundary of a hexagon by identifying its opposite sides. 
Moreover, the singularities of $L_n^0$ are arboreal in the sense of \cite{Narb, Nexp},
and consequently wrapped microlocal sheaves supported along $L_n^0$ admit a simple combinatorial description.
Combined with the results of \cite{mik} and this paper, one arrives at a satisfying mirror description of  microlocal sheaves
on smooth toric hypersurfaces.
\end{remark}

\subsection{Microlocal sheaves on surfaces}\label{s:introsurf}

To provide a collection of simple additional examples, 
we also study microlocal sheaves on  exact symplectic surfaces  $\Sigma$.
We prove they are equivalent to well-known combinatorial constructions
(found for example in \cite{DK, HKK, Ncyc}),
and in particular  depend only on the induced orientation of $\Sigma$.

Fix an exact Lagrangian skeleton $\Gamma \subset \Sigma$ which we may regard as a locally finite embedded graph.
Our constructions will  depend only upon $\Sigma$ in a small neighborhood of $\Gamma$.

We introduce  $\BZ/2$-dg categories 
$
\mu\Sh_\Gamma(\Sigma)$, $\mu\Sh^w_\Gamma(\Sigma)
$
of  respective traditional and wrapped microlocal sheaves
on the surface $\Sigma$ supported along $\Gamma$. They are the  global sections
of a respective sheaf and cosheaf $\mu\Sh_\Gamma$, $\mu\Sh^w_\Gamma$ supported along $\Gamma$, similarly
to the structure of traditional and wrapped microlocal sheaves on conic open subspaces of a cotangent bundle discussed above.
In particular, they may be  calculated as the respective limit and colimit of their sections over small open balls. 
Over small open balls, their sections are given by passing to the symplectification of the contactification, 
choosing a local identification  with a conic open subspace of a cotangent bundle, and
then taking traditional and wrapped microlocal sheaves there. To insure this makes sense, we check
their required invariance
   under such choices of   local identifications (see Lemma~\ref{lem:inv} below).  
This is particularly simple here for surfaces where the moduli of such choices essentially reduces to a circle. 
 
 To calculate the $\BZ/2$-dg categories
$\mu\Sh_\Gamma(\Sigma)$, $\mu\Sh^w_\Gamma(\Sigma)$, we compare them with simple combinatorial models.

First, the exact Lagrangian skeleton $\Gamma \subset \Sigma$
 carries a natural constructible cosheaf $\cO$ of  cyclically ordered finite sets whose stalk at a point $x\in \Gamma$ 
comprises the components
$$
\xymatrix{
 \cO_x = \pi_0((\Sigma \setminus \Gamma) \cap B_x)
}$$
where $B_x \subset \Sigma$ is a small open ball around $x$, and $\cO_x$ inherits a cyclic ordering
from the orientation of $\Sigma$.

Let $\Lambda$ denote the cyclic category, and $\BZ/2\on{-}\dgst_k$ the $\oo$-category of small stable
$\BZ/2$-dg categories. By the results of~\cite{DK, Ncyc}, there is a functor
$$
\xymatrix{
\cC_{st}: \Lambda^{op}\ar[r] &  \BZ/2\on{-}\dgst_k
}
$$
that assigns to  the 
standard cycle $\Lambda_n  = \{ 1 \to 2\to  \cdots \to n\to n+1 \to 1\} \in \Lambda^{op}$
a  $\BZ/2$-dg category equivalent to  perfect $\BZ/2$-dg modules over the $A_n$-quiver
$$
\xymatrix{
\cC_{st}(\Lambda_n) \simeq (A_{n}\on{-\Perf}_k)_{\BZ/2}
}
$$
Passing to left adjoints,  we obtain an additional functor of opposite variance
$$
\xymatrix{
\cC^w_{st}:   \Lambda\ar[r] &  \BZ/2\on{-}\dgst_k
}
$$

Introduce the  respective composite sheaf and cosheaf   $\cF_{\Gamma} = \cC_{st} \circ \cO$,
$\cF_{\Gamma}^w =  \cC^w_{st} \circ \cO$, and form their respective
 global sections
$$
\xymatrix{
\cF_{\Gamma}(\Sigma) = \lim_{\Gamma} \cF_{\Gamma}
&
\cF_{\Gamma}^w(\Sigma) =  \colim_{\Gamma}  \cF_{\Gamma}^w
}
$$

\begin{thm}[Theorem~\ref{thm:surface} below]\label{thm:introsurface}
There are canonical equivalences of $\BZ/2$-dg categories 
$$
\xymatrix{
\cF_\Gamma(\Sigma) \ar[r]^-\sim & \mu\Sh_\Gamma(\Sigma)
&
\cF^w_\Gamma(\Sigma) \ar[r]^-\sim & \mu\Sh^w_\Gamma(\Sigma)
}
$$
\end{thm}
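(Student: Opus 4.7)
The plan is to use the (co)sheaf structure on both sides to reduce to a local calculation on small balls around points of $\Gamma$, then match the local models. By construction, both microlocal (co)sheaves $\mu\Sh_\Gamma, \mu\Sh_\Gamma^w$ and the combinatorial (co)sheaves $\cF_\Gamma = \cC_{st}\circ \cO$, $\cF_\Gamma^w = \cC^w_{st}\circ\cO$ are constructible with respect to the stratification of $\Gamma$ by vertices and open edges. Hence it suffices to produce canonical equivalences over small open balls $B_x\subset\Sigma$ and verify compatibility under the specialization map from a vertex into each adjacent edge.

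There are two local cases. At an interior point $x$ of an edge, $\cO_x$ is the $2$-element cyclic set $\Lambda_1$, so $\cF_\Gamma$ and $\cF_\Gamma^w$ both give $(A_1\on{-\Perf}_k)_{\BZ/2} \simeq (\Perf_k)_{\BZ/2}$. On the microlocal side, passing to the symplectification of the contactification identifies a neighborhood of the edge with a conic neighborhood of the zero section of a line in $T^*\BR^2$, for which both traditional and wrapped microlocal sheaves reduce to perfect $k$-modules. At a vertex $v$ of valence $n+1$, the cyclic set $\cO_v$ is $\Lambda_n$, so combinatorially $\cF_\Gamma(B_v) \simeq (A_n\on{-\Perf}_k)_{\BZ/2}$, and similarly for $\cF_\Gamma^w$. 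Under symplectification-contactification, a neighborhood of $v$ is realized as a neighborhood in $T^*\BR^2$ of a conic Lagrangian consisting of the zero section over the incident edges together with $n+1$ Lagrangian rays in the cotangent fiber above $v$. This is an arboreal singularity of type $A_n$ in the sense of \cite{Narb, Nexp}, and the calculation there identifies its traditional microlocal sheaves with $A_n\on{-\Perf}_k$; the wrapped variant arises as the compact objects in the ind-completion and again gives $A_n\on{-\Perf}_k$ in this smooth-and-proper local setting.

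For the gluing between strata, I would check that the specialization from a vertex neighborhood to an edge-interior neighborhood corresponds on the combinatorial side to a coface map in the cyclic category $\Lambda$, and on the microlocal side to the sectorial restriction from the arboreal germ to the adjacent smooth stratum. Both are determined by the data of which two cyclic sectors lie on either side of the chosen edge, so they match by direct comparison of the descriptions in \cite{Narb}.

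The main obstacle is canonicity: the cyclic ordering on $\cO_v$ is induced by the orientation of $\Sigma$, while the $A_n$-module structure on the microlocal side a priori depends on the choice of local contactification-symplectification identification. The space of such choices near $v$ is essentially a torsor over the circle, and Lemma \ref{lem:inv} is precisely what ensures that rotating through this torsor corresponds to the cyclic rotation action on $A_n\on{-\Perf}_k$. Once this compatibility is established, the local equivalences glue uniquely over the constructible stratification of $\Gamma$ and yield the canonical global equivalences of the theorem.
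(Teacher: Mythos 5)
Your proposal follows the paper's local-to-global framework — reduce via the sheaf/cosheaf structure to local calculations at vertices and edges, match local models, and establish canonicity via Lemma~\ref{lem:inv} — but diverges at the vertex calculation. The paper computes directly with a concrete front model: $\# v$ curves $Y_e\subset\BR^2$ emanating from the origin, all tangent to the $x$-axis there, with the equivalence $A_{\# v-1}\on{-Mod}_k\simeq\mu\Sh^\un_{\Lambda}(\Omega)$ exhibited by sending the skyscraper, projective, and injective modules to explicit standard extensions $i_{a!}k_{U_a}$, $j_{a!}k_{V_a}$, $h_{a!}k_{W_a}$. These explicit objects are also precisely what the proof of Lemma~\ref{lem:inv} tracks through the cyclic mutation, so the direct calculation is doing double duty. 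You instead invoke arboreal singularity theory from \cite{Narb, Nexp} as a black box. This is a valid alternative, but the paper's front is not presented in arboreal normal form (all $Y_e$ are mutually tangent at the origin), so you would either need to supply an isotopy to the arboreal $A_{\# v-1}$ germ or first apply the non-characteristic expansion of \cite{Nexp} as in the proof of Theorem~\ref{thm:duality}; the paper deliberately avoids this dependence for the surface case. Note also that your description of the vertex Lagrangian model (zero section over the incident edges plus fiber rays in $T^*\BR^2$) describes $\Gamma$ sitting in $\Sigma$, not its image after contactification-symplectification, which is the cone over the Legendrian lift of the front $Y$ inside the conic open set $\{\eta>0\}$ — the distinction matters if one wants to apply \cite{Narb} directly.
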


\begin{corollary}[Corollary~\ref{cor:topol} below]\label{cor:introtopol}
 The  $\BZ/2$-dg categories 
  $\mu\Sh_\Gamma(\Sigma)$, $\mu\Sh^w_\Gamma(\Sigma)$,
only depend on the exact symplectic structure on $\Sigma$ through the orientation
it defines. 
\end{corollary}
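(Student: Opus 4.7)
The plan is to use Theorem~\ref{thm:introsurface} to transport the question to the combinatorial models $\cF_\Gamma(\Sigma) = \lim_\Gamma \cF_\Gamma$ and $\cF^w_\Gamma(\Sigma) = \colim_\Gamma \cF^w_\Gamma$, where it becomes a matter of inspecting which pieces of input data to the construction actually depend on the exact symplectic structure of $\Sigma$. By the canonical equivalences of Theorem~\ref{thm:introsurface}, it suffices to establish the analogous statement for $\cF_\Gamma(\Sigma)$ and $\cF^w_\Gamma(\Sigma)$.

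Next I would decompose the construction into its constituent pieces. The skeleton $\Gamma \subset \Sigma$ is fixed in the setup, the functors $\cC_{st}:\Lambda^{op}\to \BZ/2\on{-}\dgst_k$ and $\cC^w_{st}:\Lambda \to \BZ/2\on{-}\dgst_k$ are purely abstract cyclic-category constructions independent of $\Sigma$, and the global sections $\lim_\Gamma$ and $\colim_\Gamma$ depend only on $\Gamma$ as an abstract topological space. Therefore the only avenue by which the ambient exact symplectic structure on $\Sigma$ can influence $\cF_\Gamma(\Sigma)$ or $\cF^w_\Gamma(\Sigma)$ is through the cosheaf $\cO$ of cyclically ordered finite sets on $\Gamma$.

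Then I would examine $\cO$ directly. Its stalk at $x \in \Gamma$ is the finite set $\pi_0((\Sigma\setminus\Gamma)\cap B_x)$, which is determined by the smooth structure of $\Sigma$ in a neighborhood of $\Gamma$ and not the symplectic form. The sole datum beyond the underlying smooth topology that enters is the cyclic ordering on each stalk, and by construction this cyclic ordering is read off from the orientation of $\Sigma$ at $x$. Hence $\cO$, regarded as a constructible cosheaf of cyclically ordered finite sets, depends on the symplectic form on $\Sigma$ only via the orientation it induces.

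The main obstacle I anticipate is really just bookkeeping: one must check that the equivalences of Theorem~\ref{thm:introsurface} themselves are canonically constructed using only this same data, so that when two exact symplectic structures on $\Sigma$ induce the same orientation and the same skeleton, one obtains a strictly compatible identification of the cosheaves $\cO$ and hence of the associated $\BZ/2$-dg categories. Once this naturality is in hand, combining the three steps above yields the corollary.
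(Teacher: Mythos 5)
Your proposal is correct and follows the same route the paper takes: Corollary~\ref{cor:topol} is an immediate consequence of Theorem~\ref{thm:surface}, since the target combinatorial models $\cF_\Gamma$, $\cF^w_\Gamma$ are built from $\cC_{st}$, $\cC^w_{st}$ (pure cyclic-category data) composed with the cosheaf $\cO$, which reads off only the orientation. The one remark worth making is that your concluding worry about ``strictly compatible identification of the cosheaves'' is a non-issue: for two exact symplectic structures inducing the same orientation on $\Sigma$ (and the same embedded $\Gamma$), the cosheaf $\cO$ is literally identical, not merely canonically identified, so no naturality bookkeeping is needed beyond what Theorem~\ref{thm:surface} already supplies.
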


\begin{remark}
Suppose we restrict to compact Lagrangian skeleta, or more generally,
Lagrangian skeleta  with fixed structure near  the circular ends of $\Sigma$.
Then Dyckerhoff-Kapranov~\cite{DK} have explained that the 
the  $\BZ/2$-dg categories 
  $\cF_\Gamma(\Sigma)$, $\cF^w_\Gamma(\Sigma)$ are canonically independent
  of the specific choice of  skeleton. They provide invariants
  of the oriented surface $\Sigma$ equipped with a finite subset of its circular ends.
\end{remark}

The proof of Theorem~\ref{thm:introsurface} is local in the following sense.  As mentioned above, 
there is a respective sheaf and cosheaf
$\mu\Sh^w_{\Gamma}$, $\mu\Sh^w_{\Gamma}$ supported along $\Gamma$,
and canonical restriction equivalences of $\BZ/2$-dg categories
$$
\xymatrix{
\mu\Sh_{\Gamma}(\Sigma) \simeq \lim_{\Gamma} \mu\Sh_\Gamma
&
\mu\Sh^w_{\Gamma}(\Sigma) \simeq  \colim_{\Gamma}   \mu\Sh^w_\Gamma
}
$$
We show there are canonical equivalences of respective sheaves and cosheaves
$$
\xymatrix{
\cF_{\Gamma}\ar[r]^-\sim &  \mu\Sh_{\Gamma}
&
\cF_{\Gamma}^w \ar[r]^-\sim &   \mu\Sh^w_{\Gamma}
}
$$
thus implying the equivalences of Theorem~\ref{thm:introsurface}. Moreover, this provides a
 combinatorial model for calculating $\mu\Sh_{\Gamma}$, $\mu\Sh^w_{\Gamma}$.

We also obtain a simple proof of the following analogue of Theorem~\ref{thm:introduality}.
One could compare with the result of~\cite{HKK} classifying objects of $\cF_{\Gamma}^w(\Sigma)$
in terms of compact immersed curves in $\Sigma$ equipped with local systems.

\begin{corollary}[Corollary~\ref{cor:surfduality} below]
The natural hom-pairing provides an equivalence
$$
\xymatrix{
\mu\Sh_\Gamma(\Sigma)  \ar[r]^-\sim & \Fun^{ex}(\mu\Sh_\Gamma^w(\Sigma)^{op}, \Perf_k) 
}
$$
\end{corollary}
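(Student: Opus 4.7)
The plan is to reduce the global duality claim to a stalkwise statement on $\Gamma$ and then invoke the local duality already established in Theorem~\ref{thm:introduality}. By Theorem~\ref{thm:introsurface}, the $\BZ/2$-dg categories $\mu\Sh_\Gamma(\Sigma)$ and $\mu\Sh_\Gamma^w(\Sigma)$ are the global sections of the sheaf $\mu\Sh_\Gamma$ and the cosheaf $\mu\Sh_\Gamma^w$ on $\Gamma$ respectively. The hom-pairing $(\cF,\cG)\mapsto \Hom(\cG,\cF)$ is contravariant and local in $\cG$, so corestriction on the cosheaf side is sent to restriction on the functor side; this assembles the pairing into a morphism of sheaves on $\Gamma$,
$$
\mu\Sh_\Gamma \longrightarrow \Fun^{ex}\bigl(\mu\Sh_\Gamma^w{}^{op},\,\Perf_k\bigr),
$$
where the target is a sheaf because $\Fun^{ex}(-^{op},\Perf_k)$ carries colimits of small stable $\BZ/2$-dg categories to limits. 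In particular, taking global sections recovers the desired natural map
$$
\mu\Sh_\Gamma(\Sigma) \longrightarrow \Fun^{ex}\bigl(\mu\Sh_\Gamma^w(\Sigma)^{op},\,\Perf_k\bigr).
$$

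Next, I would verify this map is a stalkwise equivalence on $\Gamma$. At any point $x\in\Gamma$, choose a small open ball $B_x\subset \Sigma$ and pass to the symplectification of its contactification; by construction (as in the setup preceding Lemma~\ref{lem:inv}), this is identified with a conic open subspace $\Omega_x\subset T^*Z_x$ for some real analytic $Z_x$, and $\Gamma\cap B_x$ lifts to a conic Lagrangian $\Lambda_x\subset\Omega_x$. Under this identification the sections of $\mu\Sh_\Gamma$ and $\mu\Sh_\Gamma^w$ over $B_x$ agree with the traditional and wrapped microlocal sheaves of Definition~\ref{defn:introwrapped}. Applying Theorem~\ref{thm:introduality} to the pair $(\Omega_x,\Lambda_x)$ then gives the required stalk-level equivalence.

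It remains to check that these local dualities are compatible with the (co)restriction maps built into the sheaf and cosheaf structures. This is automatic: all local equivalences come from the single globally defined hom-pairing, and the identifications with conic open subspaces of cotangent bundles are used only to invoke Theorem~\ref{thm:introduality} as a black box. Consequently the morphism of sheaves above is an equivalence on stalks, hence an equivalence globally, and taking global sections yields the claimed equivalence.

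The main obstacle is not deep but bookkeeping-intensive: one must confirm that the hom-pairing genuinely assembles into a morphism of sheaves (the contravariant interplay of the sheaf $\mu\Sh_\Gamma$ with the cosheaf $\mu\Sh_\Gamma^w$), and that $\Fun^{ex}(-^{op},\Perf_k)$ converts the cosheaf condition on $\mu\Sh_\Gamma^w$ into the sheaf condition on the target within the $\BZ/2$-dg setting used for surfaces. Once this formalism is in place, the statement follows formally from Theorems~\ref{thm:introsurface} and~\ref{thm:introduality} together with the invariance of microlocal sheaves under symplectification of the contactification.
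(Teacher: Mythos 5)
Your proof is correct, but at the local step it takes a genuinely different route from the paper. The reduction to the stalkwise statement via the sheaf/cosheaf structure and the fact that $\Fun^{ex}((-)^{op},\Perf_k)$ carries colimits to limits is exactly the paper's first move, borrowed from the proof of Theorem~\ref{thm:duality}. Where you diverge is in the local case: you re-enter the cotangent-bundle setting and invoke Theorem~\ref{thm:duality} (hence the arboreal-singularity machinery of \cite{Narb, Nexp}) as a black box. The paper instead stays entirely within the combinatorial surface picture already established in Theorem~\ref{thm:surface}: locally both $\mu\Sh_\Gamma(p)$ and $\mu\Sh^w_\Gamma(p)$ are identified with $(A_{\#v-1}\on{-\Perf}_k)_{\BZ/2}$, perfect and coherent modules over the $A_n$-quiver coincide, and this category is smooth and proper, so it is tautologically its own dual. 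The paper's argument is therefore more elementary and self-contained within Section~4; yours is shorter to state but imports the full strength of Theorem~\ref{thm:duality}. One small point you pass over: the local sections of $\mu\Sh_\Gamma$ and $\mu\Sh_\Gamma^w$ on a surface are by definition the $\BZ/2$-foldings of the cotangent-bundle microlocal categories (Lemma~\ref{lem:inv}), whereas Theorem~\ref{thm:duality} is stated for genuine dg categories. You would need to note that folding commutes with passing to $\Fun^{ex}((-)^{op},\Perf_k)$ in the smooth proper case; this is automatic but worth saying, and it is precisely the observation the paper makes explicit by pointing at $A_n$-quiver modules.
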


\begin{remark}
For  the choice of a bicanonical  trivialization of $\Sigma$, 
it is possible to lift the above theory from $\BZ/2$-dg categories to dg categories.
On the one hand, the contact transformations providing the local invariance of
the respective sheaf  and cosheaf $\mu\Sh_\Gamma$, $\mu\Sh^w_\Gamma$ will no longer have a ``metaplectic anomaly" 
requiring the trivialization of the shift $[2]$. On the other hand, 
the combinatorial functors $\cC_{st}$,  $\cC_{st}^w$,  and hence the respective sheaf  
and cosheaf
$\cF_{\Gamma}^w $, $\cF_{\Gamma}$  will lift to dg categories. 
Finally, the proof of Theorem~\ref{thm:introsurface} can be repeated 
to give an equivalence of dg categories of global sections for these respective lifts.
\end{remark}

%
%
%
%
%

As a concrete example, we describe traditional and wrapped microlocal sheaves on the $n$-punctured sphere $\Sigma_n = S^2 \setminus \{n \mbox{ points}\}$, for $n\geq 2$, 
with respect to a natural compact Lagrangian skeleton $\Gamma_n \subset \Sigma_n$.
 To state the answer,
for $n = 2$, set $Q_2 = \G_m$, 
and for $n \geq 3$, set
$$
\xymatrix{
Q_n = \BA^1 \cup_{pt} \BP^1 \cup_{pt} \BP^1  \cdots   \cup_{pt} \BP^1 \cup_{pt} \BA^1
}
$$
with $n-3$ copies of $\BP^1$, and where the inclusions of $pt= \Spec k$
 into each copy of $\BP^1$ from the  left and right  have distinct images.
 
 Then we have the following version of the results of~\cite{AAEKO},
 but with the $B$-side given in the form of coherent sheaves
 rather than matrix factorizations.

\begin{thm}[Theorem~\ref{thm:surfacemirror} below]\label{thm:introsurfacemirror}
There are mirror equivalences of $\BZ/2$-dg categories
$$
\xymatrix{
 \mu\Sh_{\Gamma_n}(\Sigma_n) \ar[r] &\Perf_\proper(Q_n)_{\BZ/2}
&
\mu\Sh^w_{\Gamma_n}(\Sigma_n) \ar[r]^-\sim &  \Coh(Q_n)_{\BZ/2}
}
$$
\end{thm}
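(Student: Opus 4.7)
The plan is to combine the combinatorial equivalence of Theorem~\ref{thm:introsurface} with a descent presentation of $\Coh(Q_n)_{\BZ/2}$ and $\Perf_\proper(Q_n)_{\BZ/2}$. For $n=2$, the cylinder $\Sigma_2$ admits the compact skeleton $\Gamma_2=S^1$, and the equivalences $\mu\Sh^w_{S^1}(T^*S^1)\simeq\Coh(\G_m)_{\BZ/2}$ and $\mu\Sh_{S^1}(T^*S^1)\simeq\Perf_\proper(\G_m)_{\BZ/2}$ are the base case already recorded in Example~\ref{ex:introlocsys} and Remark~\ref{rem:introasymmetry}. So the substance of the argument is the case $n\geq 3$.

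For $n\geq 3$, I would take $\Gamma_n\subset\Sigma_n$ to be a chain of $n-1$ circles $C_1,\ldots,C_{n-1}$, arranged so that $C_i$ and $C_{i+1}$ meet at a single point $v_i$ for $i=1,\ldots,n-2$, and so that the $n$ complementary regions of $\Sigma_n\setminus\Gamma_n$ each contain exactly one puncture. As a graph, $\Gamma_n$ has $n-2$ tetravalent vertices $v_1,\ldots,v_{n-2}$: the end circles $C_1$ and $C_{n-1}$ are loops based at $v_1$ and $v_{n-2}$, while each middle circle is the union of two parallel arcs joining consecutive vertices. The orientation of $\Sigma_n$ endows each $v_i$ with a cyclic ordering of its four incident edge-ends in the combinatorial cosheaf $\cO$, and the Dyckerhoff--Kapranov invariance noted in the introduction shows that this choice of compact skeleton does not affect the final answer.

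I would then invoke Theorem~\ref{thm:introsurface} to identify $\mu\Sh^w_{\Gamma_n}(\Sigma_n)\simeq\colim_{\Gamma_n}\cF^w_{\Gamma_n}$ and $\mu\Sh_{\Gamma_n}(\Sigma_n)\simeq\lim_{\Gamma_n}\cF_{\Gamma_n}$. Using that $\cF^w_{\Gamma_n}$ is a cosheaf, the colimit decomposes into successive pushouts indexed by the linear chain: local contributions from each circle, glued at the nodes $v_i$. The key local identifications are: an unmarked circle gives $\Coh(\G_m)_{\BZ/2}$ (the $n=2$ case); a circle with one vertex is a pushout of $\Coh(\G_m)_{\BZ/2}$ with $\Perf_k$ that produces $\Coh(\BA^1)_{\BZ/2}$; and a circle with two vertices is a double pushout realizing the two-chart affine cover of $\BP^1$, producing $\Coh(\BP^1)_{\BZ/2}$. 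On the algebraic side, $Q_n$ is itself a pushout in schemes of its components along the nodes, and correspondingly $\Coh(Q_n)_{\BZ/2}$ is the fiber product of the $\Coh$ of the components over $\Coh(\pt)_{\BZ/2}$ at each node. Matching the cosheaf corestriction maps at each $v_i$ with the fiber restrictions at the nodes of $Q_n$ produces the wrapped equivalence $\mu\Sh^w_{\Gamma_n}(\Sigma_n)\simeq\Coh(Q_n)_{\BZ/2}$.

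The main obstacle is the two-vertex local identification: realizing $\Coh(\BP^1)_{\BZ/2}$ as the prescribed iterated pushout of values of $\cC^w_{st}$ is essentially a derived, $\BZ/2$-folded form of Beilinson's description of $\Coh(\BP^1)$ via the affine cover $\BP^1=\BA^1\cup_{\G_m}\BA^1$, and one must carefully verify that the two cyclic gluing directions encoded by the orientation of $\Sigma_n$ reconstruct the correct algebraic gluing. Once this is established, the traditional/sheaf equivalence $\mu\Sh_{\Gamma_n}(\Sigma_n)\simeq\Perf_\proper(Q_n)_{\BZ/2}$ follows by the parallel limit argument, either directly from the sheaf side of Theorem~\ref{thm:introsurface} or by applying Corollary~\ref{cor:surfduality} together with the corresponding duality between $\Coh(Q_n)$ and $\Perf_\proper(Q_n)$.
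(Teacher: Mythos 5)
Your overall strategy matches the paper's: reduce to combinatorial Fukaya categories via Theorem~\ref{thm:introsurface}, present $\Coh(Q_n)_{\BZ/2}$ and $\Perf_\proper(Q_n)_{\BZ/2}$ by descent along the iterated pushout of $Q_n$, and match the two colimit/limit diagrams piece by piece. But the specific skeleton you chose breaks the local matching, and your local identifications are incorrect for that skeleton.

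In your chain-of-circles skeleton every vertex $v_i$ is a transverse crossing of two Lagrangian circles, so locally the support Lagrangian near $v_i$ is the zero section together with the \emph{full} conormal fiber $T^*_{v_i}T^1$ — exactly the situation of Example~\ref{ex:cylinder}. Consequently a tubular neighborhood of an end circle $C_1$, which contains its single tetravalent vertex, has wrapped microlocal sheaves $\Sh^w_{\cS_1}(T^1)\simeq \Coh(\BP^1)_{\BZ/2}$, not $\Coh(\BA^1)_{\BZ/2}$ as you claim. To produce $\Coh(\BA^1)_{\BZ/2}$ one needs a \emph{trivalent} vertex where the skeleton has only the half-conormal ray $\{\xi\geq 0\}$, which is the content of Lemma~\ref{lemma:cylinder} ($\Lambda_1 = T^1\cup\{(0,\xi):\xi\geq 0\}$). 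Similarly, a middle circle of your skeleton carries \emph{two} tetravalent vertices with full conormal lines; its wrapped category is not $\Coh(\BP^1)_{\BZ/2}$ but rather $\Coh$ of a cycle of two projective lines glued at two points. Finally, the overlaps in your cover are neighborhoods of the tetravalent vertices, contributing $(A_3\on{-\Perf}_k)_{\BZ/2}$ rather than $(\Perf_k)_{\BZ/2}$, so the glue data does not line up with $\Coh(\pt)_{\BZ/2}$ either. DK invariance guarantees that the \emph{global} answer is the same regardless of skeleton, but it does not make your term-by-term matching with the scheme-theoretic descent of $Q_n$ work; with your skeleton it simply fails.

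The paper sidesteps all of this by using the ``ladder'' skeleton $\Gamma_n$ — $n-1$ disjoint horizontal circles joined by a single transverse vertical segment — and covering $\Sigma_n$ by horizontal strips. In this cover the two end strips contain a circle with one trivalent vertex (half-conormal), yielding $\Coh(\BA^1)_{\BZ/2}$ by Lemma~\ref{lemma:cylinder}; each middle strip contains a circle with one tetravalent vertex (full conormal), yielding $\Coh(\BP^1)_{\BZ/2}$ by Example~\ref{ex:cylinder}; and the overlaps contain only a piece of the vertical segment, yielding $(\Perf_k)_{\BZ/2}$. These are exactly the terms of the iterated pushout computing $\Coh(Q_n)_{\BZ/2}$ via Proposition~\ref{prop:moredescent}, and the corestrictions match the pushforwards $\Coh(\pt)\to\Coh(\BA^1)$ and $\Coh(\pt)\to\Coh(\BP^1)$. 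A secondary issue is your appeal to the open (Zariski) cover $\BP^1 = \BA^1\cup_{\G_m}\BA^1$: the descent results the paper uses (Theorem~\ref{thm:descent} and its corollaries) concern pushouts along \emph{closed} embeddings, for which $\Coh_*$ sends pushouts to pushouts; descent for open covers in $\Coh$ is a different statement and is not the mechanism at play. To repair your proof you would need to replace the chain-of-circles skeleton with one whose end vertices are trivalent, at which point your argument essentially becomes the paper's.
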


\begin{remark}\label{rem:unambiguous}
Theorems~\ref{thm:intromirror} and~\ref{thm:introsurfacemirror} overlap when we take the one-dimensional
 pair of pants $P_1$
and the thrice-punctured sphere $\Sigma_3$. 

On the $B$-side, the statements are related by a natural  $\BZ/2$-dg  equivalence
$$
\xymatrix{
\Coh(Q_3)_{\BZ/2} \ar[r]^-\sim &  \MF(\BA^3, W_3)
}
$$
found in Proposition~\ref{prop:mfcohequiv} below.

On the $A$-side, they are related by a natural  $\BZ/2$-dg  equivalence
$$
\xymatrix{
\mu\Sh^w_{\Gamma_3}(\Sigma_3) \ar[r]^-\sim & \mu\Sh^w_{L_1}(P_1)
}
$$
Here the left hand side is defined by covering $\Sigma_3$ by open balls equivalent 
to conic open subspaces of a cotangent bundle and gluing local dg categories of microlocal sheaves. 
The right hand side is defined by realizing   $P_1$ as a Hamiltonian reduction of a single conic open subspace of a cotangent bundle and taking its dg category of microlocal sheaves.
In both cases, their calculation reduces to the property that they complete the diagram 
of $\BZ/2$-dg categories
  $$
 \xymatrix{
\ar[d] \Coh(\{0\})_{\BZ/2} \ar[r] & \Coh(\BA^1)_{\BZ/2}  \\ 
 \Coh(\BA^1)_{\BZ/2}  & 
 }
 $$ 
 to a pushout square,
 where the maps of the diagram are given by the evident pushforwards along the inclusion $ \Spec k = \{0\} \to \BA^1$.

\end{remark}


\subsection{Acknowledgements}
I  thank  D. Auroux, D. Ben-Zvi, K. Cieliebak, Y. Eliashberg, D. Gaistgory, N. Sheridan, D. Treumann, Z. Yun,
and E. Zaslow for their interest, encouragement, and valuable comments. I am particularly indebted to S. Ganatra
for suggesting the viewpoint of the amoeba, and pointing me toward Mikhalkin's construction of ``tailored pairs of pants".

I am grateful to the NSF for the support of grant DMS-1502178.


\section{Landau-Ginzburg $B$-model}

This  primary aim of this section
is to present  the Landau-Ginzburg $B$-model  with background $\BA^{n+2}$ and superpotential $W_{n+1}= z_1, \ldots, z_{n+1}$ in a form suited to  match our $A$-model calculations.


\subsection{Preliminaries}

Fix a characteristic zero algebraically closed field $k$.

By a dg category $\cC$, we will  always mean a stable differential $\BZ$-graded category.
The morphisms $\Hom_\cC(c_1, c_2)$,
for $c_1, c_2\in \cC$, form $\BZ$-graded cochain complexes, and 
we have the shift functors $[n]$, for $n\in \BZ$.

By a 2-periodic dg category $\cC$, we will mean a stable differential $\BZ$-graded category such that the shift functor $[2]$
is equivalent to the identity.
The morphisms $\Hom_\cC(c_1, c_2)$,
for $c_1, c_2\in \cC$, continue to form $\BZ$-graded cochain complexes,
though
they are invariant under even shifts.

By a $\BZ/2$-dg category, we will  always mean a stable differential $\BZ/2$-graded category.
 The morphisms $\Hom_\cC(c_1, c_2)$,
for $c_1, c_2\in \cC$, form $\BZ/2$-graded cochain complexes, and
the shift $[2]$ is equivalent to the identity.
 
 To any dg category $\cC$, we can assign a $\BZ/2$-dg category $\cC_{\BZ/2}$ by taking the same objects
 and only remembering the underlying $\BZ/2$-grading on morphism complexes
 $$
 \xymatrix{
 \Hom_{ \cC_{\BZ/2}}^0(c_1, c_2) = \oplus_{n\in \BZ} \Hom_{\cC}^{2n} (c_1, c_2)
&
 \Hom_{ \cC_{\BZ/2}}^1(c_1, c_2) = \oplus_{n\in \BZ} \Hom_{\cC}^{2n+1} (c_1, c_2)
 }
 $$
We will refer to $\cC_{\BZ/2}$ as the folding of $\cC$.

 To any $\BZ/2$-dg category $\cC$, we can assign a  2-periodic dg category $\cC_{2\BZ}$ by taking the same objects
 and taking the morphism complexes
 $$
 \xymatrix{
 \Hom_{ \cC_{2\BZ}}^n(c_1, c_2) = \Hom_{\cC}^{\ol n} (c_1, c_2)
 &n\in \BZ, \ol n\in\BZ/2
 }
 $$
We will refer to $\cC_{2\BZ}$ as the unfurling of $\cC$. 
%
This   provides an equivalence between 2-periodic and $\BZ/2$-dg categories,
and we will go back and forth between them without much comment.


\subsection{Matrix factorizations}

Consider the background $M = \Spec A$, with $A= k[z_1, \ldots, z_n] $, and a superpotential $W\in A$ such that $0\in \BA^1$ is its only possible critical value.

Introduce the special fiber $X = W^{-1}(0) = \Spec B$, with $B  = A /(W)$. 

Let $\Perf(X)$ be the dg category of perfect complexes on $X$,
and $\Coh(X)$ the dg category of bounded coherent complexes of sheaves on $X$.

Let $\D_\sing(X) = \Coh(X)/\Perf(X)$ be the 2-periodic dg quotient category of singularities. 


Let $\MF(M, W)$ be the  $\BZ/2$-dg category of matrix factorizations.
Its objects are pairs $(V, d)$ of a $\BZ/2$-graded  free $A $-module $V$
of finite rank
equipped with an odd endomorphism $d$
such that $d^2 =W\id$.
Thus we have $V=  V^0 \oplus V^1$, $d= (d_0, d_1) \in \Hom(V^0, V^1)\oplus \Hom(V^1, V^0)$, and
$d^2 =  (d_1 d_0, d_0 d_1) =(W\id, W\id)  \in \Hom(V^0, V^0)\oplus \Hom(V^1, V^1)$.
We denote the data of a matrix factorization by a diagram
$$
\xymatrix{
V^0 \ar[r]^-{d_0} &  V^1 \ar[r]^-{d_1}   & V^0
}
$$

Let $\MF(M, W)_{2\BZ} $ denote the unfurling of $\MF(M, W)$.
Then thanks to Orlov~\cite{orlov}, there is an equivalence of 2-periodic dg categories
$$\xymatrix{
\MF(M, W)_{2\BZ} \ar[r]^-\sim &\D_\sing(X)
& 
(V = V^0 \oplus V^1, d = (d_0, d_1)) \ar@{|->}[r] & \coker(d_1)
}
$$


\subsection{Coordinate hyperplanes}

For $n\in \mathbb N$, set  $[n] = \{1, \ldots, n\}$.

We will focus on 
 the background $\BA^{n+1} = \Spec A$, with $A_{n+1} = k[z_a \, |\, a\in [n+1]] $, and the superpotential 
  $$
  \xymatrix{
  W_{n+1} = z_1 \cdots z_{n+1} \in A_{n+1}
  }
  $$
  
  Introduce the union of coordinate hyperplanes 
  $$
  \xymatrix{
  X_n= W_{n+1}^{-1}(0) = \Spec B_n
  }$$ 
  where we set $B_n =A_{n+1} /(W)$.

It will also be convenient to set $W_{n+1}^a = W_{n+1}/z_a \in A$, for $a\in [n+1]$.

For $a\in [n+1]$, let $X_{n}^a =  \Spec A/(z_a) \subset X_n$ denote the coordinate hyperplane,  
and  $\cO_n^a$  its structure sheaf. As an object of $\Perf(\BA^{n+1})$, it admits the free resolution
$$
\xymatrix{
A_{n+1} \ar[r]^-{z_a} & A_{n+1} \ar[r] & \cO_n^a
}
$$
and as an object of $\Coh(X_{n})$, it admits the infinite resolution
$$
\xymatrix{
\cdots \ar[r]^-{W_{n+1}^a} & B_n \ar[r]^-{z_a} & B_n \ar[r]^-{W_{n+1}^a} & B_n \ar[r]^-{z_a} & B_n \ar[r] & \cO_n^a
}
$$

For $a\in [n+1]$,  let $\ul \cO_{n}^a\in \MF(\BA^{n+1}, W_{n+1})$ denote the matrix factorization
$$
\xymatrix{
A_{n+1} \ar[r]^-{W_{n+1}^a} &  A_{n+1}  \ar[r]^-{z_a}   & A_{n+1}
}
$$

\begin{prop}\label{prop:mfhoms}
The $\BZ/2$-dg category $\MF(\BA^{n+1}, W_{n+1})$ is split-generated by the collection of objects
$ \ul \cO_n^a$, for $a\in [n]$. There are equivalences of $\BZ/2$-graded $k$-modules
$$
\xymatrix{
H^*(\Hom(\ul \cO_n^a, \ul \cO_n^a)) \simeq  A_{n+1}/(z_a, W_{n+1}^a)
&
a\in [n+1]
}
$$
$$
\xymatrix{
H^*(\Hom(\ul \cO_n^a, \ul \cO_n^b)) =
 A_{n+1}/(z_a, z_b)[-1]
&
a\not = b\in [n+1]
}
$$
\end{prop}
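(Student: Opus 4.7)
My plan is to separate the proposition into its two assertions and handle split-generation categorically, via Orlov's equivalence, and the hom-space computations by direct manipulation of the $\BZ/2$-dg morphism complex.

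For the split-generation, I would invoke the equivalence $\MF(\BA^{n+1}, W_{n+1})_{2\BZ} \simeq \D_\sing(X_n)$ recalled above, under which $\ul\cO_n^a$ corresponds to the structure sheaf $\cO_{X_n^a}$ of the coordinate hyperplane. It then suffices to show that the collection $\{\cO_{X_n^a}\}_{a\in [n+1]}$ split-generates $\D_\sing(X_n)$. Since $X_n$ is a reduced scheme with smooth irreducible components $X_n^a \simeq \BA^n$, every object of $\Coh(X_n)$ admits a finite filtration by coherent subsheaves whose subquotients are pushforwards along the closed immersions $i_a: X_n^a \to X_n$ of coherent sheaves on individual components. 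Each $\Coh(X_n^a)$ coincides with $\Perf(X_n^a)$ and is split-generated by $\cO_{X_n^a}$, and proper pushforward along $i_a$ preserves this property up to perfect complexes on $X_n$. Quotienting by $\Perf(X_n)$ then delivers the desired split-generation of $\D_\sing(X_n)$.

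For the hom-space calculations, I would write out the $\BZ/2$-graded morphism complex for each pair explicitly, representing matrix factorizations as $2\times 2$ matrices over $A_{n+1}$, and compute the internal differential $\partial A = dA - (-1)^{|A|} Ad$ directly. The computation is driven by the identity $z_a \cdot W_{n+1}^a = W_{n+1}$ together with the UFD structure of $A_{n+1}$. For endomorphisms, closed even morphisms reduce to diagonal scalars $(f,f)$ with $f\in A_{n+1}$, while the image of odd morphisms under $\partial$ is the ideal $(z_a, W_{n+1}^a)$ sitting diagonally, producing $H^{\mathit{even}} \simeq A_{n+1}/(z_a, W_{n+1}^a)$; the odd cocycle condition $z_a g_0 + W_{n+1}^a g_1 = 0$ forces $(g_0, g_1)$ to lie in the cyclic syzygy module generated by $(W_{n+1}^a, -z_a)$, which one checks coincides exactly with the image of even morphisms under $\partial$, giving $H^{\mathit{odd}} = 0$.

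For the cross-homs with $a \neq b$, the key observation is that $W_{n+1}^a$ and $W_{n+1}^b$ share the common factor $W_{n+1}^{ab} := W_{n+1}/(z_a z_b)$, so that $W_{n+1}^a = z_b W_{n+1}^{ab}$ and $W_{n+1}^b = z_a W_{n+1}^{ab}$. With this factorization the differential on the morphism complex reduces to a Koszul-style problem in $(z_a, z_b)$, and the analogous analysis shows $H^{\mathit{even}} = 0$ while $H^{\mathit{odd}} \simeq A_{n+1}/(z_a, z_b)$, matching the $[-1]$ shift in the statement. I anticipate the main obstacle to be the split-generation step: one must be careful that split-generation transports correctly through Orlov's equivalence of 2-periodic dg categories, and that modding out by $\Perf(X_n)$ does not interfere with the component-by-component filtration argument on $\Coh(X_n)$. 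The hom calculations themselves are routine linear algebra once the commutator differential is set up with correct signs, though bookkeeping must be attentive to the $\BZ/2$-grading conventions.
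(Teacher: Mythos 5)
Your hom-space computations are correct and essentially what the paper has in mind (the paper just calls them ``a straightforward calculation''); the reduction in the cross-hom case via the common factor $W_{n+1}^{ab}$ and coprimality of $z_a, z_b$ is the right mechanism. Your worry about transporting split-generation through Orlov's equivalence is unfounded: it is an equivalence of $2$-periodic dg categories and automatically preserves split-generation.

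The generation argument, however, has a genuine gap. The proposition asserts split-generation by $\ul\cO_n^a$ for $a\in[n]$ only --- $n$ of the $n+1$ coordinate objects --- whereas you set out to show that the full collection $\{\cO_{X_n^a}\}_{a\in[n+1]}$ split-generates $\D_\sing(X_n)$. That is a strictly weaker claim and does not suffice: the restriction to $a\in[n]$ is exactly what is used in Proposition~\ref{prop:mfcohequiv}, where the equivalence $\Coh(X_{n-1})_{\BZ/2}\simeq\MF(\BA^{n+1},W_{n+1})$ matches $\cO_{n-1}^a\mapsto\ul\cO_n^a$ for $a\in[n]$ and there is no $(n+1)$st coordinate hyperplane on the $\Coh(X_{n-1})$ side. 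The paper's proof explicitly contains the step you are missing: after noting that the full set $a\in[n+1]$ generates $\Coh(X_n)$ (which your d\'evissage by irreducible supports does establish), it observes that $\ul\cO_n^{n+1}$ already lies in the triangulated envelope of the $\ul\cO_n^a$ with $a\in[n]$, so that the last object can be dropped. To see why, note that under Orlov's correspondence the shift $\ul\cO_n^{n+1}[1]$ is sent to $\coker(z_1\cdots z_n)\simeq \cO_{Y_1\cup\cdots\cup Y_n}$, where $Y_a=X_n^a$; the structure sheaf of this union sits in the (exact, since the $z_a$ are coprime) inclusion--exclusion complex whose terms are $\cO_{Y_I}$ for nonempty $I\subset[n]$, and each $\cO_{Y_I}$, viewed on a single smooth component $Y_a$ with $a\in I$, has a finite Koszul resolution by free $\cO_{Y_a}$-modules, so $\cO_{Y_I}\in\langle\cO_{Y_a}\rangle$. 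Without this reduction from $n+1$ to $n$ generators, your argument proves a different statement than the proposition.
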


\begin{proof}
 The collection of objects $ \cO_n^a$, for $a\in [n+1]$, generates $\Coh(X_n)$,    
and $\ul \cO_n^{n+1}$ is in the triangulated envelope of the collection of objects $\ul \cO_a$, for $a\in [n]$,
hence
the collection of objects $\ul \cO_n^a$, for $a\in [n]$, generates $\MF(\BA^{n+1}, W_{n+1})$.
The cohomology of morphism complexes is a straightforward calculation.
\end{proof}

Next, let us reduce the dimension by one, and consider the space $\BA^{n} = \Spec A_n$, with $A_n = k[z_a \, |\, a\in [n]]$,
and the function 
  $$
  \xymatrix{
  W_{n} = z_1 \cdots z_{n} \in A_{n}
  }
  $$

Introduce the union of coordinate hyperplanes  
$$
\xymatrix{
X_{n-1}= W_n^{-1}(0) = \Spec B_{n-1}
}
$$
where we set $B_{n-1}  =A_n /(W_n)$.

It will also be convenient to set $W_n^a = W_n/z_a \in A_n$, for $a\in [n]$.

For $a\in [n]$, let $X_{n-1}^a =  \Spec A_n/(z_a)$ denote the coordinate hyperplane,  
and  $\cO_{n-1}^a$  its structure sheaf. As an object of $\Perf(\BA^n)$, it admits the resolution
$$
\xymatrix{
A_n \ar[r]^-{z_a} & A_n \ar[r] & \cO_{n-1}^a
}
$$
and as an object of $\Coh(X_{n-1})$, it admits the infinite resolution
$$
\xymatrix{
\cdots \ar[r]^-{W_n^a} &  B_{n-1} \ar[r]^-{z_a} &  B_{n-1} \ar[r]^-{W_n^a} &  B_{n-1} \ar[r]^-{z_a} & B_{n-1} \ar[r] & \cO_{n-1}^a 
}
$$

 \begin{prop}\label{prop:cohhoms}
 Let $u$ be a variable of cohomological degree $2$.

The dg category $\Coh(X_{n-1})$ is generated by the collection of objects
$ \cO^a_{n-1}$, for $a\in [n]$. 
There are equivalences of $\BZ$-graded $k$-modules
$$
\xymatrix{
H^*(\Hom( \cO_{n-1}^a,  \cO_{n-1}^a)) \simeq  A_{n}[u]/(z_a, uW_{n}^a)
&
a\in [n]
}
$$
$$
\xymatrix{
H^*(\Hom( \cO_{n-1}^a,  \cO_{n-1}^b)) =
 A_{n}[u]/(z_a, z_b)[-1]
&
a\not = b\in [n]
}
$$
 \end{prop}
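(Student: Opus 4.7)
The plan is to mirror the proof of Proposition~\ref{prop:mfhoms}, adapting from matrix factorizations to $\Coh(X_{n-1})$. There are two independent things to prove: the generation statement, and the $\Ext$ calculations. The resolution already displayed in the text does almost all the work for the latter.

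For generation, any $\cF \in \Coh(X_{n-1})$ admits a finite filtration whose subquotients are coherent sheaves supported set-theoretically on a single coordinate hyperplane $X^a_{n-1} \cong \BA^{n-1}$ (for example, take successive subsheaves of $z_a$-torsion). Because each $X^a_{n-1}$ is regular and affine, $\Coh(X^a_{n-1}) = \Perf(X^a_{n-1})$ is split-generated by its structure sheaf, and pushforward along the closed embedding $X^a_{n-1}\hookrightarrow X_{n-1}$ sends $\cO_{X^a_{n-1}}$ to $\cO^a_{n-1}$; iterating, every $\cF$ lies in the triangulated envelope of the collection $\{\cO^a_{n-1}\}_{a\in[n]}$.

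For the diagonal $\Ext$, apply $\Hom_{B_{n-1}}(-,\cO^a_{n-1})$ to the infinite resolution. Writing $M = \cO^a_{n-1} = A_n/(z_a)$, one gets the complex
$$
M \xrightarrow{z_a} M \xrightarrow{W_n^a} M \xrightarrow{z_a} M \xrightarrow{W_n^a} \cdots
$$
In $M$, multiplication by $z_a$ is identically zero, while $M$ is a polynomial ring on which multiplication by $W_n^a$ is injective. Hence $H^0 = A_n/(z_a)$, odd cohomology vanishes, and each even $H^{2k}$ for $k\geq 1$ equals $A_n/(z_a,W_n^a)$. Packaging this $2$-periodic tail with the degree-$0$ piece yields exactly $A_n[u]/(z_a,uW_n^a)$ with $\deg u = 2$.

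For the off-diagonal case $a\neq b$, the same procedure with $N = \cO^b_{n-1} = A_n/(z_b)$ produces
$$
N \xrightarrow{z_a} N \xrightarrow{W_n^a} N \xrightarrow{z_a} N \xrightarrow{W_n^a} \cdots
$$
Here $N$ is a polynomial ring in the variables $\{z_c : c\neq b\}$, so $z_a$ acts injectively; meanwhile $W_n^a = \prod_{c\neq a} z_c$ contains $z_b$ as a factor and hence vanishes in $N$. Thus even cohomology is zero and each odd degree contributes $N/z_aN = A_n/(z_a,z_b)$, matching $A_n[u]/(z_a,z_b)[-1]$ as claimed. There is no substantive obstacle: the only care needed is tracking that the induced differential on the $\Hom$ complex is still multiplication by the same element, which is standard.
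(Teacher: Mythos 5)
Your computation of the Ext groups is correct and is exactly what the paper leaves to the reader: apply $\Hom(-,\cO^b_{n-1})$ to the displayed $2$-periodic free resolution and observe that in $\cO^a_{n-1}$ multiplication by $z_a$ vanishes and by $W_n^a$ is injective (and dually for $a\neq b$), which yields the graded pieces of $A_n[u]/(z_a,uW_n^a)$ and $A_n[u]/(z_a,z_b)[-1]$. One step in the generation argument is stated a bit loosely: a sheaf merely set-theoretically supported on $X^a_{n-1}$ is not necessarily pushed forward from $X^a_{n-1}$; you should insert the further (standard) finite filtration by powers of $z_a$, whose subquotients are honest modules over $A_n/(z_a)$, before invoking that $\Coh(\BA^{n-1})$ is generated by its structure sheaf --- and by the Hilbert syzygy theorem this is genuine generation, not just split-generation, so the conclusion matches the ``generated'' of the statement. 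With that refinement your argument is complete and takes the same route the paper's terse proof implicitly indicates.
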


\begin{proof}
 The collection of objects $ \cO_{n-1}^a$, for $a\in [n]$, clearly generates,
and the cohomology of morphism complexes is a straightforward calculation.
\end{proof}

Now let us consider the natural coordinate projection
$$
\xymatrix{
\pi:X_{n} \ar[r] & X_{n-1}
 }$$

Recall that we write $\Coh(X_{n-1})_{\BZ/2}$ for the folding of $\Coh(X_{n-1})$.

 \begin{prop}\label{prop:mfcohequiv}
 The pullback of coherent sheaves 
 $$
\xymatrix{
 \pi^*:\Coh(X_{n-1}) \ar[r] & \Coh(X_{n}) 
 }
 $$
 induces an equivalence of $\BZ/2$-dg categories
$$
\xymatrix{
\Coh(X_{n-1})_{\BZ/2} \ar[r]^-\sim & \MF(\BA^{n+1}, W_{n+1})
}
$$
such that $\cO_{n-1}^a$ maps to $\ul \cO_n^a$, for $a\in [n]$, and such that $u$ maps to $z_{n+1}$. 
 \end{prop}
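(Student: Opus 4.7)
The strategy is to reduce the comparison to a finite check on explicit generating sets, using Propositions~\ref{prop:mfhoms} and~\ref{prop:cohhoms}. On the coherent side, $\Coh(X_{n-1})$ is generated by $\{\cO_{n-1}^a\}_{a\in[n]}$; on the matrix-factorization side, $\MF(\BA^{n+1},W_{n+1})$ is split-generated by $\{\ul\cO_n^a\}_{a\in[n]}$. Composing $\pi^*$ with Orlov's equivalence $\D_\sing(X_n)\simeq\MF(\BA^{n+1},W_{n+1})$ yields a candidate $\BZ/2$-dg functor, and the proof reduces to showing (i) that this functor carries $\cO_{n-1}^a$ to $\ul\cO_n^a$ for $a\in[n]$, and (ii) that it induces isomorphisms on Hom complexes between these generators with the parameter $u$ identified with $z_{n+1}$.

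For task (i), I would transport the infinite periodic resolution
$$
\cdots \xrightarrow{W_n^a} B_{n-1} \xrightarrow{z_a} B_{n-1} \xrightarrow{W_n^a} B_{n-1} \xrightarrow{z_a} B_{n-1}
$$
of $\cO_{n-1}^a$ over $X_{n-1}$ through the pullback to $B_n$. The key algebraic observation is the identity $W_{n+1}^a = z_{n+1}W_n^a$ for $a\in[n]$: the naive lift of the resolution from $B_{n-1}$ to $B_n$ fails to be a chain complex, since successive compositions give $W_n\neq 0$ in $B_n$, and must be corrected by inserting a factor of $z_{n+1}$ into the ``$W_n^a$'' differentials, so that successive compositions equal $W_{n+1}=0$. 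The resulting periodic complex $\cdots\to B_n\xrightarrow{W_{n+1}^a}B_n\xrightarrow{z_a}B_n$ is precisely the reduction modulo $W_{n+1}$ of the matrix factorization $\ul\cO_n^a$. Under Orlov's equivalence $\ul\cO_n^a$ corresponds to the structure sheaf $\cO_{X_n^a}$, which is represented by this complex. The identification $u\leftrightarrow z_{n+1}$ then emerges naturally: $u$ is the degree-$2$ generator of the periodicity over $B_{n-1}$, and under the lift it is realized by multiplication by $z_{n+1}$.

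For task (ii), Propositions~\ref{prop:cohhoms} and~\ref{prop:mfhoms} give the explicit Ext algebras $A_n[u]/(z_a, uW_n^a)$ and $A_n[u]/(z_a,z_b)[-1]$ on the coherent side, versus $A_{n+1}/(z_a, W_{n+1}^a)$ and $A_{n+1}/(z_a,z_b)[-1]$ on the matrix-factorization side. Folding mod~$2$ and substituting $u\mapsto z_{n+1}$ on the first pair recovers the second pair verbatim, using $W_{n+1}^a = z_{n+1}W_n^a$. Because $\pi^*$ is $A_n$-linear and sends $u$ to $z_{n+1}$ by task~(i), the induced map on Ext algebras is precisely this substitution, hence a bijection.

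Full faithfulness on a generating set, together with the fact that these generators map to a split-generating collection of $\MF(\BA^{n+1},W_{n+1})$, then forces the folded functor to be an equivalence of $\BZ/2$-dg categories. The main obstacle I anticipate is task~(i): verifying with care that the ``lifted'' periodic complex over $B_n$ represents the correct class in $\D_\sing(X_n)$ up to perfect complexes, and that the correspondence $u\leftrightarrow z_{n+1}$ is canonical rather than a choice imposed after the fact.
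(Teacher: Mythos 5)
Your proposal follows the same route as the paper: pass through Orlov's equivalence $\MF(\BA^{n+1},W_{n+1})_{2\BZ}\simeq\D_\sing(X_n)$, match the generators $\cO_{n-1}^a\mapsto\ul\cO_n^a$, and verify the equivalence by comparing the Ext algebras of Propositions~\ref{prop:mfhoms} and~\ref{prop:cohhoms} under the substitution $u\mapsto z_{n+1}$, using $W_{n+1}^a=z_{n+1}W_n^a$. The extra discussion of lifting periodic resolutions is a sound elaboration of what the paper calls a ``straightforward calculation.''
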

 
 \begin{proof}
 The pullback $\pi^*$ induces a functor via passage to $D_\sing(X_n)$. By the first assertion of Proposition~\ref{prop:mfhoms}, it is essentially surjective. Thus it suffices to check it is an isomorphism on the cohomology of the morphism complexes
  of the generating collection of Proposition~\ref{prop:cohhoms}.
This   is a straightforward calculation using Propositions~\ref{prop:mfhoms} and \ref{prop:cohhoms}.
 \end{proof}


\subsection{Descent description}

We will collect here useful descent statements proved in~\cite{GR} in the setting of stable dg categories~\cite{dgdg}.
They are specializations of   analogous statements possible in  the setting
of stable $\oo$-categories~\cite{htopos, halgebra}.

Let $\dgst_k$ be the $\oo$-category of  $k$-linear small stable dg categories with exact functors.

Let $\dgSt_k$ be the $\oo$-category of $k$-linear cocomplete dg categories with continuous functors.
Let $\dgSt_k^{c}$ be the not full $\oo$-subcategory of  $\dgSt^c_k$ of $k$-linear cocomplete dg categories with functors those that preserve compact objects.
Taking ind-categories provides an equivalence   
$$
\xymatrix{
\Ind:\dgst_k \ar[r]^-\sim & \dgSt_k^{c} &
}
$$
and taking compact objects provides an inverse equivalence
$$
\xymatrix{
\kappa:\dgSt_k^{c} \ar[r]^-\sim & \dgst_k
}
$$
%

Let $\fX_k$ denote the category of affine lci $k$-schemes and closed embeddings. (We work in this setting
for concreteness but far more generality is possible.)

Passing to coherent sheaves and pushforwards  provides a functor
$$
\xymatrix{
\Coh_*: \fX_k\ar[r] &  \dgst_k
}
$$
Passing to perfect complexes and $*$-pullbacks provides a functor 
$$
\xymatrix{
\Perf^*: \fX_k^{op}\ar[r] &  \dgst_k
}
$$
and similarly for perfect complexes with proper support
$$
\xymatrix{
\Perf^*_\proper: \fX_k^{op}\ar[r] &  \dgst_k
}
$$
  since morphisms of $\fX_k$ are closed embeddings.
  
Passing to ind-coherent sheaves and pushforwards provides a functor
$$
\xymatrix{
\Ind\Coh_*: \fX_k\ar[r] &  \dgSt_k^{c}
}
$$
Passing to quasi-coherent sheaves and $*$-pullbacks provides a functor 
$$
\xymatrix{
\QCoh^* \simeq \Ind \Perf^*: \fX_k^{op}\ar[r] &  \dgSt_k^{c}
}
$$

Passing to ind-coherent sheaves and $*$-pullbacks or $!$-pullbacks provide respective functors
$$
\xymatrix{
\Ind\Coh^*: \fX_k^{op}\ar[r] &  \dgSt_k
&
\Ind\Coh^!: \fX_k^{op}\ar[r] &  \dgSt_k
}
$$
Tensoring with the dualizing complex provides a natural intertwining equivalence
$$
\xymatrix{
\otimes \omega:\Ind\Coh^*\ar[r]^-\sim & \Ind\Coh^!
}
$$

Now let us quote the folllowing result of Gaitsgory-Rozenblyum~ \cite{GR} and then note the further assertions it implies.

\begin{thm}[Theorem A.1.2 of Chapter IV.4 of \cite{GR}]\label{thm:descent}
A pushout square 
in $\fX_k$ is taken to a pullback square by $\Ind\Coh^!$.
\end{thm}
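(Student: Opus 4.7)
The plan is to reduce the statement for $\Ind\Coh^!$ to an equivalent statement for $\Ind\Coh^*$ using the intertwining equivalence $\otimes \omega: \Ind\Coh^* \risom \Ind\Coh^!$, and then to prove the latter by an explicit gluing construction at the level of affine rings. Since the objects of $\fX_k$ are affine lci schemes, the dualizing complex behaves well under closed embeddings, and the intertwining equivalence commutes with the pushout/pullback structure up to twisting by compatible dualizing complexes. Care must be taken here because $\omega_W$ restricts differently to $X$, $Y$, $Z$ under $!$-pullback than under $*$-pullback, but the lci hypothesis ensures these twists are line bundles that assemble coherently over the square.

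Having made this reduction, a pushout $W = X \sqcup_Z Y$ of closed embeddings in $\fX_k$ translates at the ring level to a fiber product: if $X = \Spec A$, $Y = \Spec B$, $Z = \Spec C$ with surjections $A \twoheadrightarrow C$ and $B \twoheadrightarrow C$, then $W = \Spec R$ with $R = A \times_C B$. The assertion becomes that
$$
\Ind\Coh(R) \risom \Ind\Coh(A) \times_{\Ind\Coh(C)} \Ind\Coh(B),
$$
with maps on the right given by $!$-pullback (or equivalently, via the reduction above, by derived base change). The functor in this direction sends $M$ to the triple $(M \otimes_R^{\mathbf L} A, M \otimes_R^{\mathbf L} B, \text{canonical gluing over } C)$. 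The inverse sends a triple $(M_X, M_Y, \phi)$ to the homotopy fiber product $M_X \times_{M_X \otimes_A^{\mathbf L} C} M_Y$, identified via $\phi$.

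To verify this is an equivalence, I would argue on compact generators. The category $\Ind\Coh(W)$ is compactly generated by $\Coh(W)$, and the closed cover by $X$ and $Y$ (with intersection $Z$) is jointly set-theoretically surjective, so every coherent sheaf on $W$ can be reconstructed from its restrictions to $X$ and $Y$ together with gluing data over $Z$. Concretely, one checks the unit and counit of the adjunction between the two functors above are equivalences by computing on coherent sheaves supported scheme-theoretically in $X$ or $Y$, which by the recollement for the closed embeddings reduces to classical Mayer-Vietoris for $R = A \times_C B$.

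The main obstacle is organizing the higher coherence data so that the equivalence is one of $\oo$-categories, not merely an equivalence of underlying triangulated categories, and ensuring functoriality of the gluing in the pushout square (rather than just its existence for a fixed square). This is the reason \cite{GR} develops the formalism of $\Ind\Coh$ as a functor out of a category of correspondences with $*$- and $!$-structures compatible via base change; once that apparatus is in place, the pushout-to-pullback property is a formal consequence of the adjunction $(i^*, i_*)$ being recovered from $(i_*, i^!)$ together with the observation that closed embeddings in $\fX_k$ are preserved under base change along the pushout. I would thus follow \cite{GR} in encoding the entire argument inside the category of correspondences and invoking their abstract descent machinery to upgrade the pointwise equivalence above to the required equivalence of squares in $\dgSt_k$.
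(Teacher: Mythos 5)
The paper does not prove this statement; it is quoted verbatim from Gaitsgory--Rozenblyum (Theorem A.1.2 of Chapter IV.4 of \cite{GR}) and used here as a black box. The paper's own contribution in this passage is Corollary~\ref{cor:descent}, which transfers the $\Ind\Coh^!$-descent to $\Coh_*$, $\Ind\Coh_*$, $\Perf^*$, $\Perf^*_\proper$, and $\Ind\Coh^*$. So there is no internal proof to compare your attempt against; the relevant question is whether your argument is correct on its own terms.

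There is a genuine gap, and it sits precisely where $\Ind\Coh$ and $\QCoh$ diverge. After reducing to the $\Ind\Coh^*$-statement via $\otimes\omega$ (valid, though running opposite to the paper, which derives the $\Ind\Coh^*$-descent \emph{from} the $\Ind\Coh^!$-theorem), you describe the comparison functor explicitly by $M \mapsto M\otimes^{\mathbf L}_R A$ and its putative inverse by a homotopy fiber product of modules. These are the $\QCoh^*$-pullback and the limit in $\QCoh$, not $\Ind\Coh^*$. They agree with the $\Ind\Coh$ constructions only on smooth schemes, but the pushout $W = X\sqcup_Z Y$ of a closed embedding is essentially never smooth --- for example $\BA^1\cup_{\{0\}}\BA^1 = \Spec k[x,y]/(xy)$. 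The Remark immediately following Corollary~\ref{cor:descent} shows that $\QCoh^*$ does \emph{not} take this pushout to a pullback square: the $*$-restriction $\QCoh(\BA^1)\to\QCoh(\{0\})$ is tensoring by the perfect Koszul complex $[k[t]\xrightarrow{t}k[t]]$ and so preserves limits, whereas $\tilde\imath^*\colon\QCoh(\BA^1\cup_{\{0\}}\BA^1)\to\QCoh(\BA^1)$ is tensoring by $k[x]$ over $k[x,y]/(xy)$, which has an infinite resolution, is not dualizable, and does not preserve limits. Your explicit construction, if it worked, would prove exactly this false $\QCoh^*$-statement. The entire substance of the [GR] theorem is that $\Ind\Coh$ --- compactly generated by $\Coh$ rather than $\Perf$, with $!$-pullback along a closed embedding given by the right adjoint to $i_*$ rather than by a tensor formula --- is the renormalization for which the descent holds. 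Your final paragraph defers the coherence issues to the correspondence formalism of \cite{GR}; but in fact not just the coherence but the core assertion lives there, which makes the proposal circular.
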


\begin{corollary}\label{cor:descent}
A pushout square 
in $\fX_k$ is taken to a pushout square by $\Coh_*$ and $\Ind \Coh_*$, and a pullback square by
$\Perf^*$,  $\Perf^*_\proper$, and $\Ind \Coh^*$.
\end{corollary}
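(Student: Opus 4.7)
The strategy is to derive all five assertions from Theorem~\ref{thm:descent} by formal $\infty$-categorical manipulation: the intertwining equivalence $\otimes\omega$, the adjunction $(\Ind\Coh_*, \Ind\Coh^!)$, and the equivalence $\kappa : \dgSt_k^{c} \xrightarrow{\sim} \dgst_k$. No further geometric input should be needed.

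First, since $\otimes\omega : \Ind\Coh^* \xrightarrow{\sim} \Ind\Coh^!$ is a natural equivalence of functors $\fX_k^{op} \to \dgSt_k$, Theorem~\ref{thm:descent} immediately yields that $\Ind\Coh^*$ sends pushouts in $\fX_k$ to pullbacks in $\dgSt_k$. Next, for closed embeddings in $\fX_k$ the pair $(\Ind\Coh_*, \Ind\Coh^!)$ forms an adjunction in which both functors are continuous, so $\Ind\Coh_*$ factors through $\dgSt_k^{c} \subset \dgSt_k$. I would then invoke the standard principle that a commutative square of left adjoints in presentable stable $\infty$-categories is a pushout in $\dgSt_k^{c}$ if and only if the adjoint square of right adjoints is a pullback in $\dgSt_k$; combined with Theorem~\ref{thm:descent}, this gives the pushout statement for $\Ind\Coh_*$ in $\dgSt_k^{c}$, and hence in $\dgSt_k$. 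Since $\kappa$ is an equivalence of $\infty$-categories it preserves pushouts, so the same conclusion transports to $\Coh_* = \kappa \circ \Ind\Coh_*$.

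For $\Perf^*$ and $\Perf^*_\proper$, I would restrict the established $\Ind\Coh^*$ statement to compact objects with proper support. The inputs are that pullback along morphisms of $\fX_k$ preserves perfect complexes --- closed embeddings of affine lci schemes have finite Tor-dimension --- and preserves proper support, together with the compatibility of the pullback square of $\infty$-categories obtained for $\Ind\Coh^*$ with the fully faithful inclusions $\Perf^*_\proper \hookrightarrow \Perf^* \hookrightarrow \Ind\Coh^*$. This last step is the main obstacle, because Theorem~\ref{thm:descent} is phrased for $\Ind\Coh^!$ rather than directly for $\QCoh^*$ or $\Perf^*$; one clean way to handle it is to use that for an lci closed embedding the relative dualizing complex is an invertible shift of a line bundle, so that $\Ind\Coh^!$ differs from the $*$-pullback on perfect complexes only up to tensoring with an invertible object, reducing the two Perf cases to the already-handled $\Ind\Coh^*$ case.
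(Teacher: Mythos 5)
Your overall route matches the paper's: all five statements are derived from Theorem~\ref{thm:descent} by (i) the intertwining equivalence to get $\Ind\Coh^*$, (ii) left/right-adjoint duality (the paper phrases this as $(\dgSt_k^R)^{op}\simeq \dgSt_k$) to get $\Ind\Coh_*$, (iii) passage to compact objects via $\kappa$ to get $\Coh_*$, and (iv) restriction of $\Ind\Coh^*$ to get the two $\Perf$ cases. Items (i)--(iii) are fine and essentially identical to the paper's.

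The last paragraph, however, is muddled in a way worth flagging. You claim the ``main obstacle'' for $\Perf^*$ and $\Perf^*_\proper$ is that Theorem~\ref{thm:descent} is stated for $\Ind\Coh^!$ rather than for a $*$-pullback theory, and you resolve it by observing that the lci dualizing complex is an invertible shift. But that is precisely what the intertwining equivalence $\otimes\omega:\Ind\Coh^*\risom\Ind\Coh^!$ already does, and you have already used it in your first step to establish the $\Ind\Coh^*$ pullback square. So that portion of the argument is redundant, and it does not address the genuine remaining gap. Given the pullback $\Ind\Coh(X)\simeq \Ind\Coh(X_1)\times_{\Ind\Coh(X_0)}\Ind\Coh(X_2)$, the inclusion $\Perf(X)\hra \Perf(X_1)\times_{\Perf(X_0)}\Perf(X_2)$ is clear from preservation of perfectness along lci closed embeddings, as you say; what needs an actual argument is the reverse: an object of $\Ind\Coh(X)$ whose images in $\Ind\Coh(X_1)$ and $\Ind\Coh(X_2)$ are perfect (resp.\ perfect with proper support) must itself be perfect (resp.\ perfect with proper support). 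This is the content the paper compresses into ``perfect complexes are locally characterized,'' and it is the step your proposal gestures at via ``compatibility of the pullback square with the fully faithful inclusions'' without naming the fact being used. Replacing the final sentence of your proposal with a direct appeal to this local characterization of perfectness (and of proper support) on the closed cover $\{X_1,X_2\}$ of $X$ would fix the gap.
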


\begin{proof}
The intertwining equivalence $\Ind\Coh^*\simeq \Ind\Coh^!$ implies the assertion for $\Ind\Coh^*$.

The full embeddings  $\Perf^*_\proper \subset \Perf^* \subset \Ind\Coh^*$, and the fact that perfect complexes
are locally characterized,
 implies the assertions for $\Perf^*_\proper$ and $\Perf$.

The pullback square for $\Ind\Coh^!$ is equivalently calculated in the 
 $\oo$-category  $\dgSt_k^{R}$ of $k$-linear  cocomplete dg categories with right adjoints.
Passing to left adjoints, gives an equivalence 
 $$
 \xymatrix{
 (\dgSt_k^{R})^{op} \ar[r]^-\sim &  \dgSt_k
 }
 $$ 
and thus implies  
  the pushout square for  for $\Ind\Coh_*$.

The fact that $\Ind$ preserves colimit diagrams, and conversely, taking compact objects preserves colimit diagrams with quasi-compact morphisms, implies the assertion for $\Coh_*$.
\end{proof}

 \begin{remark}
 Let $i_1 = i_2:\Spec k = \{0\} \to \BA^1$ be the  inclusion of the origin.
It is not true that 
the pushout square 
$$
\xymatrix{
\ar[d]_-{i_1} \{0\} \ar[r]^-{i_2} & \BA^1 \ar[d]^-{\tilde i_1} \\
\BA^1  \ar[r]^-{\tilde i_2} & \BA^1 \cup_{\{0\}} \BA^1
}
$$
is taken to a pullback square by $\QCoh^*$. The restrictions $i_1^*, i_2^*:\QCoh(\BA^1)\to \QCoh(\{0\})$ preserve limits, so the natural maps
from the pullback to each $\QCoh(\BA^1)$ must also preserve limits. But the restrictions $\tilde i_1^*, \tilde i_2^*:\QCoh(\BA^1 \cup_{\{0\}} \BA^1) \to \QCoh(\BA^1) $ do not preserve limits.  
\end{remark}

Now let us specialize  to our situations of interest.

Let us return to the space $\BA^{n} = \Spec A_n$, with $A_n = k[z_a \, |\, a\in [n]]$,
and the function 
  $$
  \xymatrix{
  W_{n} = z_1 \cdots z_{n} \in A_{n}
  }
  $$
Recall the union of coordinate hyperplanes  
$$
\xymatrix{
X_{n-1}= W_n^{-1}(0) = \Spec B_{n-1}
}
$$
where we set $B_{n-1}  =A_n /(W_n)$.

Let $\cI_n^\circ$ denote the poset of proper subsets $I\subset [n]$ under inclusions. 
For $I \in \cI_n^\circ$, consider 
 the corresponding coordinate subspace
$$
\xymatrix{
X_I =  \Spec A_n/(z_a \, |\, a\not \in I)
}
$$

We have a colimit diagram of closed embeddings
\begin{equation}\label{eq:colim}
\xymatrix{
\colim_{I \in \cI_n^\circ} X_I \ar[r]^-\sim & X_{n-1}
}
\end{equation}

\begin{prop}\label{prop:descent}
The colimit diagram~\eqref{eq:colim}  
 is taken to a colimit diagram by $\Coh_*$ and $\Ind \Coh_*$, and a limit diagram by
$\Perf^*$, $\Perf^*_\proper$,  $\Ind\Coh^!$, and $\Ind \Coh^*$.
\end{prop}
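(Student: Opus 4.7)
The plan is to reduce the general statement to the single-pushout case of Corollary~\ref{cor:descent} via induction on $n$. As a preliminary step, just as in the proof of Corollary~\ref{cor:descent}, the six assertions are logically equivalent via the intertwining equivalence $\otimes \omega: \Ind\Coh^* \simeq \Ind\Coh^!$, the passage between right and left adjoints converting a pullback diagram for $\Ind\Coh^!$ into a pushout diagram for $\Ind\Coh_*$, the equivalence $\Ind: \dgst_k \simeq \dgSt_k^c$ relating $\Ind\Coh_*$ to $\Coh_*$, and the full embeddings $\Perf^*_\proper \subset \Perf^* \subset \Ind\Coh^*$ together with the local nature of perfect complexes. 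It thus suffices to prove the claim for a single functor, most conveniently $\Ind\Coh^!$.

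The induction is on $n$. The base case $n=1$ is trivial since $\cI_1^\circ = \{\emptyset\}$ has a terminal object with $X_\emptyset = \{0\} = X_0$. For the inductive step, partition $\cI_n^\circ = \cI_n^{\circ, \flat} \sqcup \cI_n^{\circ, \sharp}$ according to whether $n \notin I$ or $n \in I$. The flat piece $\cI_n^{\circ, \flat}$ is the poset of all subsets of $[n-1]$, which has terminal object $[n-1]$ with $X_{[n-1]} = V(z_n)$, so the restricted colimit diagram is automatically preserved by any functor. The sharp piece $\cI_n^{\circ, \sharp}$ is identified with $\cI_{n-1}^\circ$ via $I \mapsto I \setminus \{n\}$, and under this identification the restricted diagram becomes the $(n-1)$-dimensional coordinate-subspace diagram crossed with the $z_n$-axis $\BA^1$. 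The two pieces glue along the common subposet $\cI_{n-1}^\circ \hookrightarrow \cI_n^{\circ, \flat}$ of proper subsets of $[n-1]$, yielding at the level of colimits the outer pushout square
$$
X_{n-1} = V(z_n) \cup_{V(z_n)\, \cap\, \bigcup_{a \in [n-1]} V(z_a)} \bigcup_{a \in [n-1]} V(z_a)
$$
in $\fX_k$, to which Corollary~\ref{cor:descent} directly applies.

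The proof is then completed by combining this outer pushout with the inductive hypothesis applied to the sharp piece and to its boundary, transported into the present $\BA^n$ setting via the Künneth-type compatibility $\Ind\Coh^!(X \times \BA^1) \simeq \Ind\Coh^!(X) \otimes \Ind\Coh^!(\BA^1)$ and the fact that tensoring with a fixed cocomplete dg category preserves limit diagrams. The main obstacle is the combinatorial bookkeeping needed to verify that splicing the three resulting limit diagrams (for the flat piece, the sharp piece, and their boundary) reproduces the limit diagram for the full $\cI_n^\circ$-indexed diagram; this in turn reduces to the elementary poset-theoretic fact that $\cI_n^\circ$ is the pushout of $\cI_n^{\circ, \flat}$ and $\cI_n^{\circ, \sharp}$ along the inclusion $\cI_{n-1}^\circ \hookrightarrow \cI_n^{\circ, \flat}$ and the bijection $J \mapsto J \cup \{n\}$ from $\cI_{n-1}^\circ$ to $\cI_n^{\circ, \sharp}$.
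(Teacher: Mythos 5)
Your proof takes the same route as the paper's: induction on $n$, decomposition of $\cI_n^\circ$ according to whether $n\in I$, and reduction to the single pushout square handled by Theorem~\ref{thm:descent} and Corollary~\ref{cor:descent}; you are simply more explicit on a couple of points the paper leaves implicit. That said, two of the details you supply need small corrections. First, $\cI_n^\circ$ is \emph{not} the pushout of the posets $\cI_n^{\circ,\flat}$ and $\cI_n^{\circ,\sharp}$ over $\cI_{n-1}^\circ$ via the two maps you describe: that pushout identifies $J$ with $J\cup\{n\}$ and so has only $2^{n-1}$ objects rather than $2^n-1$. The mechanism that actually works is to cover $\cI_n^\circ$ by the full sub-posets $\cP_1=\cI_n^{\circ,\flat}$ and $\cP_2=\cI_{n-1}^\circ\cup\cI_n^{\circ,\sharp}$ with $\cP_1\cap\cP_2=\cI_{n-1}^\circ$; every chain in $\cI_n^\circ$ lies entirely inside $\cP_1$ or entirely inside $\cP_2$ (once a chain reaches $[n-1]$ no later element can contain $n$), so the nerve of $\cI_n^\circ$ is the pushout of the nerves and colimits over $\cI_n^\circ$ decompose as pushouts, after which one notes that $\cI_n^{\circ,\sharp}\subset\cP_2$ is cofinal. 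Second, tensoring with a fixed cocomplete dg category does \emph{not} preserve limits in general; what you want is that $\Ind\Coh(\BA^1)\simeq\QCoh(\BA^1)$ is dualizable in $\dgSt_k$ (it is compactly generated with a single compact generator), so that $-\otimes\QCoh(\BA^1)$ does preserve limits. With these two patches your argument is correct, and is in fact more careful than the paper's, which has an indexing slip in its two displayed intermediate colimits and glosses over the K\"unneth-type compatibility that you rightly flag as needed to apply the inductive hypothesis to $\{X_J\times\BA^1_{z_n}\}_{J\in\cI_{n-1}^\circ}$.
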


\begin{proof}

For $I \in \cI_{n-1}^\circ$, we have $X_I = (X_I \cap \{z_n = 0\}) \times \BA^1_{z_n}$,  and a colimit diagram of closed embeddings
$$
\xymatrix{
\colim_{I \in \cI_{n-1}^\circ} X_I \simeq (\colim_{I \in \cI_{n-1}^\circ}  (X_I \cap \{z_n = 0\}) ) \times \BA^1_{z_n}
\ar[r]^-\sim & X_{n-2} \times \BA^1_{z_n}
}
$$

Similarly, for $I \in \cI_{n-1}^\circ$, we have $X_{I\cup\{z_n\}} = (X_I \cap \{z_n = 0\})$,  and a colimit diagram of closed embeddings
$$
\xymatrix{
\colim_{I \in \cI_{n-1}^\circ} X_{I\cup\{z_n\}} \simeq \colim_{I \in \cI_{n-1}^\circ}  X_I \cap \{z_n = 0\}) 
\ar[r]^-\sim & X_{n-2}
}
$$

By induction, we may reduce from proving the assertions for  the colimit diagram~\eqref{eq:colim} to the pushout square
of closed embeddings
$$
\xymatrix{
\ar[d] X_{n-2} \times \{z_n = 0\} \ar[r] &X_{n-2} \times \BA^1_{z_n} \ar[d] \\
  \ar[r]  \BA^{n-1} \times \{z_n = 0\} &  X_{n-1}
}
$$
Now the assertions follow immediately from Theorem~\ref{thm:descent} and Corollary~\ref{cor:descent}.
\end{proof}

Finally, let us record another specific example of such descent.

Consider the iterated pushout
\begin{equation}\label{eq:itpushout}
\xymatrix{
Q_m = \BA^1 \cup_{pt} \BP^1 \cup_{pt} \BP^1  \cdots   \cup_{pt} \BP^1 \cup_{pt} \BA^1
}
\end{equation}
with $m$ copies of $\BP^1$, and where the inclusions of $pt= \Spec k$
 into each copy of $\BP^1$ from the  left and right  have distinct images.
 
 By repeated applications of Theorem~\ref{thm:descent}. and Corollary~\ref{cor:descent}, we obtain the following.

\begin{prop}\label{prop:moredescent}
The iterated pushout~\eqref{eq:itpushout}  
 is taken to an iterated pushout by $\Coh_*$ and $\Ind \Coh_*$, and an iterated pullback by
$\Perf^*$, $\Perf^*_\proper$,  $\Ind\Coh^!$, and $\Ind \Coh^*$.
\end{prop}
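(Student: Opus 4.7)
The plan is to proceed by induction on $m$, reducing the iterated pushout~\eqref{eq:itpushout} to a telescoping sequence of binary pushout squares, each of which falls under the hypothesis of Theorem~\ref{thm:descent} and Corollary~\ref{cor:descent}. Explicitly, I would write
$$
Q_m \simeq \bigl( \cdots \bigl( (\BA^1 \cup_{pt} \BP^1) \cup_{pt} \BP^1 \bigr) \cdots \cup_{pt} \BP^1 \bigr) \cup_{pt} \BA^1,
$$
so that at each stage $Q_k'$ (the partial pushout built so far) is glued to the next $\BP^1$ or $\BA^1$ component along a closed point. The base case is a single pushout square $\BA^1 \cup_{pt} \BA^1$, which is a pushout of closed embeddings in $\fX_k$ and therefore directly governed by Corollary~\ref{cor:descent}.

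For the inductive step, each binary pushout $Q_k' \cup_{pt} Y$ (with $Y = \BP^1$ or $\BA^1$) is a pushout square of closed embeddings of a point into an lci scheme, hence is preserved (resp.\ sent to a pullback) by $\Coh_*$ and $\Ind\Coh_*$ (resp.\ $\Perf^*$, $\Perf^*_\proper$, $\Ind\Coh^!$, and $\Ind\Coh^*$), by Theorem~\ref{thm:descent} and Corollary~\ref{cor:descent}. Granting this, the iterated pushout structure assembles automatically: iterated colimits are colimits of diagrams of colimit squares, so compatibility with each binary pushout yields compatibility with the full telescope for the covariant functors $\Coh_*$ and $\Ind\Coh_*$, and dually for limits under the contravariant functors.

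The main obstacle is a mild foundational one, namely that $\BP^1$ is not affine and so $Q_k'$ for $k \geq 1$ lies outside the category $\fX_k$ as literally defined. This is handled by the parenthetical remark in the text preceding Theorem~\ref{thm:descent} that the statement holds in considerably greater generality; specifically, the Gaitsgory-Rozenblyum descent result of Chapter IV.4 of~\cite{GR} applies to pushout squares of closed embeddings of reasonable (e.g.\ quasi-compact, quasi-separated, lci) schemes, which includes all the squares occurring here. Alternatively, one can reduce to the affine setting by working Zariski-locally on each $\BP^1$ via its standard two-chart cover, noting that the gluing points lie in a single affine open, and then patching together the descent statements; but the cleanest route is simply to invoke the general form of Theorem~\ref{thm:descent}, which requires no new input beyond what is already cited.
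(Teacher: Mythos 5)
Your proposal is correct and is essentially the paper's own argument spelled out: the paper's ``proof'' consists of the single sentence ``By repeated applications of Theorem~\ref{thm:descent} and Corollary~\ref{cor:descent}, we obtain the following,'' and your induction on $m$, resolving the iterated pushout into a telescope of binary pushout squares along closed points, is exactly the content of ``repeated applications.'' You also correctly flag and resolve the one genuine technical wrinkle the paper silently elides, namely that $\BP^1$ and the intermediate $Q_k'$ are not affine and so do not literally live in $\fX_k$; appealing to the general form of the Gaitsgory--Rozenblyum descent (which, as you note, requires no new input) is the right fix, and the paper's parenthetical ``far more generality is possible'' is clearly intended to license precisely this. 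The only thing worth making explicit, which you use implicitly, is that the left and right gluing points in each $\BP^1$ are distinct, which the paper states when defining~\eqref{eq:itpushout}; this ensures each binary square is genuinely a pushout of closed embeddings at each stage of the telescope.
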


\section{Microlocal $A$-model}


\subsection{Setup}

Let $Z$ be a real analytic manifold. 

Consider the cotangent bundle  and 
its spherical projectivization 
$$
\xymatrix{
\pi:T^*Z\ar[r] & Z
&
\pi^\oo:S^\oo Z = (T^*Z \setminus Z)/\BR_{>0}\ar[r] & Z
}
$$
with their respective standard exact symplectic and contact structures.

For convenience, fix a Riemannian metric on $Z$, so that in particular we have an identification
with the unit cosphere bundle
$$
\xymatrix{
S^\oo Z\simeq U^*Z \subset T^*Z
}
$$

We will often work with a closed conic Lagrangian subvariety and its Legendrian spherical projectivization
$$
\xymatrix{
\Lambda \subset T^*Z
&
\Lambda^\oo = (\Lambda\cap  (T^*Z \setminus Z))/\BR_{>0} \subset S^\oo Z
}
$$

By the front projection of $\Lambda^\oo$, we will mean the image 
$$
\xymatrix{
Y = \pi^\oo(\Lambda^\oo)\subset Z
}
$$ 
In the generic situation, the restriction 
$$
\xymatrix{
\pi^\oo|_{\Lambda^\oo}:\Lambda^\oo\ar[r] & Y 
}
$$ is finite so that the front projection is a hypersurface. 

We will also often fix  $\cS=\{Z_\alpha\}_{\alpha\in A}$  a Whitney stratification  of $Z$ such that $Y\subset Z$ is a union of strata. Hence
we have  inclusions
$$
\xymatrix{
\Lambda \subset T^*_\cS Z = \coprod_{\alpha\in A} T^*_{Z_\alpha} Z
&
\Lambda^\oo \subset S^\oo_\cS Z = \coprod_{\alpha\in A} S^\oo_{Z_\alpha} Z
}
$$ 
into the union of conormal bundles to strata 
and their spherical projectivizations.


Given   a Whitney stratification $\cS=\{Z_\alpha\}_{\alpha\in A}$, 
by a small open ball $B\subset Z$ around a point $z\in Z$, we will always mean an open ball $B= B(r) \subset Z$ of a radius $r>0$
such that the corresponding spheres $S(r') \subset Z$, for all $0<r'<r$, are transverse to the strata of $\cS$.


\subsection{Sheaves}

Fix a field $k$ of characteristic zero. 

Let $\Sh^\un(Z)$ denote the dg category of  all complexes of sheaves of $k$-vector spaces on $Z$
such that there exists a Whitney stratification
$\cS=\{Z_\alpha\}_{\alpha\in A}$
such that  for each stratum $Z_\alpha \subset Z$, the total cohomology sheaf of the restriction
$\cF|_{Z_\alpha}$ is locally constant. 

For a fixed Whitney stratification
$\cS=\{Z_\alpha\}_{\alpha\in A}$,
we denote by  $\Sh^\un_\cS(Z) \subset \Sh^\un(Z)$  the full dg subcategory of  objects $\cF\in \Sh^\un(Z)$ 
such that  for each stratum $Z_\alpha \subset Z$, the total cohomology sheaf of the restriction
$\cF|_{Z_\alpha}$ is locally constant.

Thus we have  $\Sh^\un(Z) = \cup_{\cS}  \Sh_\cS^\un(Z)$.
Note that we do not impose any constraint on the rank of the total cohomology sheaf 
in either of the above definitions.

Let $\Sh(Z) \subset \Sh^\un(Z)$ denote the full dg subcategory of  constructible complexes of sheaves of $k$-vector spaces on $Z$, or in other words, 
complexes of sheaves of $k$-vector spaces on $Z$
such that there exists a Whitney stratification
$\cS=\{Z_\alpha\}_{\alpha\in A}$
such that  for each stratum $Z_\alpha \subset Z$, the total cohomology sheaf of the restriction
$\cF|_{Z_\alpha}$ is locally constant of finite rank.

For  a fixed Whitney stratification
$\cS=\{Z_\alpha\}_{\alpha\in A}$,
we denote by $\Sh_\cS(Z) = \Sh(Z) \cap \Sh^\un_\cS(Z) \subset \Sh^\un(Z)$ the full dg subcategory of   $\cS$-constructible complexes, or in other words,  
complexes of sheaves of $k$-vector spaces on $Z$
such that for each stratum $Z_\alpha \subset Z$, 
 the total cohomology sheaf of the restriction
$\cF|_{Z_\alpha}$ is locally constant of finite rank.

Thus we have  $\Sh(Z) = \cup_{\cS}  \Sh_\cS(Z)$. These are the dg versions of traditional constructible complexes.

We will abuse terminology and refer to objects of $\Sh^\un(Z)$ as large constructible sheaves, and
 objects of $\Sh(Z)$ as  constructible sheaves.
When $U\subset Z$ is an open subset, we will abuse notation and write 
 $\Sh^\un_\cS(U) \subset \Sh^\un(U)$, respectively $\Sh_\cS(U) \subset \Sh(U)$, for large constructible sheaves, respectively constructible sheaves, with respect to $\cS \cap U$.

 All functors between dg categories of  sheaves will be derived in the dg sense, though the notation may not explicitly reflect it.  
 We will be careful when applying functors to large constructible sheaves to work with continuous versions.
 For example, for a closed embedding $i:Y\to Z$, by the $!$-restriction $i^!: \Sh^\un(Z)\to \Sh^\un(Y)$, we will mean
 the shifted cone $i^! \simeq \Cone(\cF\to j_*j^*\cF)[-1]$ where $j:U\to Z$ is the inclusion of the open complement $U = Z\setminus Y$. For a smooth map $f:Y\to Z$, by the $!$-pullback $f^!: \Sh^\un(Z)\to \Sh^\un(Y)$, we will mean
 the twist of the $*$-pullback $f^!\cF \simeq f^*\cF \otimes \omega_f$ where $\omega_f \simeq \orient_{f}[\dim Y/Z]
 $ is the relative dualizing complex.

\begin{example}
For $Z=pt$ a point, we have $\Sh^\un(pt) \simeq \Mod_k$, the dg category of  $k$-modules, and 
$\Sh(pt) \simeq \Perf_k$, the full dg subcategory of perfect $k$-modules.

More generally, for $Z$ stratified by a single stratum $\cS = \{Z\}$, we have 
 $\Sh_\cS^\un(Z) = \Loc^\un(Z)$, the dg category of  locally constant complexes on $Z$, and 
$\Sh_\cS(Z)  = \Loc(Z)$, the full dg subcategory of finite rank locally constant complexes on $Z$.

\end{example}


\subsection{Singular support}

Fix a point $(z, \xi) \in T^* Z$. 

Let $B\subset Z$ be an open ball around $z\in Z$,
and $f:B\to \BR$ a smooth function such that $f(z) = 0$ and $df|_z = \xi$.  
We will refer to $f$ as a compatible test function.
 
 Define the vanishing cycles functor
 $$
 \xymatrix{
 \phi_f:\Sh^\un(Z)\ar[r] & \Mod
 }
 $$
 $$
 \xymatrix{
 \phi_f(\cF) = \Gamma_{\{f\geq 0\}}(B, \cF|_B) \simeq  
 \Cone(\Gamma(B, \cF|_B) \to \Gamma(\{f< 0\}, \cF|_{ \{f< 0\}}))[-1]
 }
 $$
 where we take $B \subset Z$ sufficiently small.
 In other words, we take sections of $\cF$ over the ball $B$ supported where $f\geq 0$, or equivalently vanishing where $f< 0$. 
If we introduce the correspondences
 $$
 \xymatrix{
 B & \ar[l]_-{q_{+}} \{f\geq 0\} \ar[r]^-{p_{+}} & pt
&
 B & \ar[l]_-{q_{-}} \{f\leq 0\} \ar[r]^-{p_{-}} & pt
 }
 $$
 then the vanishing cycles take the equivalent form as hyperbolic restrictions
  $$
 \xymatrix{
 \phi_f(\cF) \simeq p_{+*} q_{+}^! (\cF|_B)
 \simeq  p_{-!} q_-^* (\cF|_B)
 }
 $$
 
 To any object $\cF \in\Sh^\un(Z) $, define its
singular support  
$$
\xymatrix{
\ssupp(\cF) \subset T^* Z
}$$ 
to be the largest closed subset such that $\phi_f(\cF) \simeq 0$, for any 
$(z, \xi) \in T^*Z \setminus \ssupp(\cF)$, and any compatible test function $f$.
Define its spherical singular support to be the spherical projectivization 
$$\xymatrix{
\ssupp^\oo(\cF) = (\ssupp(\cF)\setminus (T^*Z \setminus Z))/\BR_{>0} \subset S^\oo Z
}
$$
The singular support  is a closed conic Lagrangian subvariety,
 and   the spherical singular support  is a closed Legendrian subvariety.

\begin{example}\label{ex conv}
Suppose $i:U\to Z$ is the inclusion of an open subspace whose closure is a submanifold with boundary modeled on a Euclidean halfspace. Then the singular support  $ \ssupp(i_* k_U) \subset T^*Z$ of the standard extension $i_* k_U\in \Sh(Z)$ 
consists 
of the union of $U \subset Z$ and the inward conormal codirection along the boundary $\partial U \subset Z$.
More precisely, if near a point $z\in \partial U$, we have $U = \{f > 0\}$, for 
a local coordinate $f$, then $ \ssupp(i_* k_U)|_z$ is the closed ray $\R_{\geq 0} \langle df|_z\rangle $. 

More generally, suppose $i:U\to Z$ is the inclusion of an open subspace whose closure is a submanifold with corners
modeled  on a Euclidean quadrant. 
Then the singular support  $ \ssupp(i_* k_U) \subset T^*Z$ consists of the inward conormal cone along the boundary $\partial U \subset Z$. More precisely, if near a point $z\in \partial U$, we have $U = \{f_1, \ldots, f_k > 0\}$, for 
local coordinates $f_1, \ldots, f_k$, then $  \ssupp(i_* k_U)|_z$
 is the closed cone
$\R_{\geq 0} \langle df_1|_z, \ldots, df_k|_z\rangle$. 

\end{example}

For a conic Lagrangian subvariety $\Lambda\subset T^*Z$,
we write $\Sh_{\Lambda}^\un(Z) \subset \Sh^\un(Z)$,
respectively $\Sh_{\Lambda}(Z)    \subset \Sh(Z)$ 
 for the full dg subcategory of objects $\cF \in \Sh^\un(Z)$, respectively $\cF \in \Sh(Z)$,  with singular support satisfying 
$
\ssupp(\cF) \subset \Lambda.
$

Given  a Whitney stratification
$\cS$,
an inclusion $\Lambda\subset T^*_\cS Z$ implies the full inclusions
$\Sh^\un_{\Lambda}(Z)\subset \Sh^\un_\cS(Z)$, 
$\Sh_{\Lambda}(Z)\subset \Sh_\cS(Z)$, 
and more generally, an inclusion $\Lambda\subset \Lambda'$
implies the full inclusions
$\Sh^\un_{\Lambda}(Z)\subset \Sh^\un_{\Lambda'}(Z)$,
$\Sh_{\Lambda}(Z)\subset \Sh_{\Lambda'}(Z)$.

When $U\subset Z$ is an open subset, we will abuse notation and write 
 $\Sh^\un_\Lambda(U) \subset \Sh^\un(U)$, respectively $\Sh_\Lambda(U) \subset \Sh(U)$,
 for the full dg subcategory of objects $\cF \in \Sh^\un(U)$, respectively $\cF \in \Sh(U)$,  with singular support satisfying 
$
\ssupp(\cF) \subset \Lambda\cap \pi^{-1}(U).
$

\begin{example}

For the zero-section $\Lambda = Z \subset T^*Z$, and stratification $\cS = \{Z\}$, 
 we have the coincidences 
$\Sh^\un_{\Lambda}(Z) = \Sh^\un_\cS(Z) \simeq \Loc^\un(Z)$,
$\Sh_{\Lambda}(Z) = \Sh_\cS(Z) \simeq\Loc(Z)$.

\end{example}

\begin{remark}
Let $\omega_Z \simeq \orient_Z[\dim Z] \simeq p^! k_{pt}$, for $p:Z\to pt$, be the Verdier dualizing complex. 
For a conic Lagrangian subvariety $\Lambda\subset T^*Z$, 
and the antipodal  conic Lagrangian subvariety  $-\Lambda \subset T^*Z$, Verdier duality provides an involutive  equivalence
$$
\xymatrix{
\bD_Z:\Sh_{\Lambda}(Z)^{\op} \ar[r]^-\sim & \Sh_{-\Lambda}(Z)
&
\bD_Z(\cF) = \inthom(\cF, \omega_Z)
}
$$
\end{remark}


\subsection{Microlocal sheaves}\label{s:microsh}

Let $\Lambda \subset T^*Z$  be a closed conic Lagrangian subvariety.


%

To  a conic open subspace $\Omega \subset T^*Z$, let us introduce the dg category $\mu\Sh^\un_{\Lambda}(\Omega)$
  of large microlocal sheaves on $\Omega$ supported along~$\Lambda$. 
 To describe its construction, let us first mention some of its formal properties. 
 
 Given an inclusion of conic open subspaces $\Omega'_Z \subset \Omega$,
  there is a natural restriction functor $\mu\Sh^\un_{\Lambda}(\Omega)\to \mu\Sh^\un_{\Lambda}(\Omega_{Z'})$.
  These assignments assemble into a sheaf 
  $\mu\Sh^\un_{\Lambda}$
 of dg categories supported along $\Lambda$. 
  Furthermore, there exists a Whitney stratification of $\Lambda$ such that the restriction of $\mu\Sh^\un_{\Lambda}$  to each stratum is locally constant. Thus we can reconstruct $\mu\Sh^\un_{\Lambda}$ from the assignments
  $\mu\Sh^\un_{\Lambda}(\Omega)$ for small conic open neighborhoods $\Omega \subset T^*Z$
  of points $(x, \xi) \in \Lambda$.
  Finally, given a closed embedding of conic Lagrangian subvarieties $\Lambda'\subset \Lambda$,
  there is a natural full embedding $\mu\Sh^\un_{\Lambda'} \subset \mu\Sh^\un_{\Lambda}$
  of sheaves of dg categories.

  All of the above facts follow from the local description of $\mu\Sh^\un_{\Lambda}(\Omega)$
 which we now recall.
Note for a point $(x, \xi) \in \Lambda$ there are two local cases to consider: either 1) $\xi = 0$ so that locally
$\Omega$ is  the cotangent bundle $T^*B$ of a small open ball $B\subset Z$, or  2) $\xi \not = 0$ so that locally $\Omega$ is the cone over a small 
open ball  $\Omega^\oo \subset S^\oo Z$.

\medskip

Case 1) For $B= \pi(\Omega)$, there is always a canonical functor $\Sh^\un_\Lambda(B) \to \mu\Sh^\un_\Lambda(\Omega)$,
and when $\Omega = T^*B$, this functor is in fact an equivalence
$$
\xymatrix{
\Sh^\un_\Lambda(B) \ar[r]^-\sim & \mu\Sh^\un_\Lambda(T^*B)
}
$$

\medskip

Case 2) Suppose $\Omega \subset T^*Z$ is the cone over  
a small open ball $\Omega^\oo\subset S^\oo Z$.
 
 Set $B= \pi(\Omega)$, and let $\Sh^\un_{\Lambda}(B, \Omega)\subset \Sh^\un(B)$
denote the full dg subcategory of objects $\cF\in \Sh^\un(B)$ with singular support satisfying $\ssupp(\cF) \cap \Omega \subset \Lambda$.
Then there is a natural equivalence
$$
\xymatrix{
\Sh^\un_{\Lambda}(B, \Omega)/K^\un(B, \Omega)\ar[r]^-\sim  & \mu \Sh^\un_{\Lambda}(\Omega)
}
$$
where  $K^\un(B, \Omega)\subset\Sh^\un_{\Lambda}(B, \Omega)$ denotes the full dg subcategory of 
 objects $\cF\in \Sh^\un(B)$ with singular support satisfying $\ssupp(\cF) \cap \Omega = \emptyset$.

It is possible to make this more concrete, and in particular avoid the appearance of a dg quotient category.
 By applying a contact transformation, we may arrange to be in the generic situation where  the
front projection 
$$
\xymatrix{
\pi^\oo|_{\Lambda^\oo}:\Lambda^\oo\ar[r] & Y 
}
$$ 
is finite so that $Y= \pi^\oo(\Lambda^\oo )\subset Z$ is a hypersurface.
%
%
%

In this case,  the natural functor
$\Sh^\un_{\Lambda}(B)\to \mu \Sh^\un_{\Lambda}(\Omega)$
 induces  an
equivalence on the quotient dg category
$$
\xymatrix{
\Sh^\un_{\Lambda}(B)/\Loc^\un(B)\ar[r]^-\sim  & \mu \Sh^\un_{\Lambda}(\Omega)
}
$$
where  $\Loc^\un(B) \subset \Sh^\un(B)$ denotes the full dg subcategory of locally constant complexes, or in other words complexes with  singular support lying in the zero-section $B\subset T^*B$.

Now introduce the respective full dg subcategories 
$$
\xymatrix{
\Sh^\un_{\Lambda}(B)^0_* \subset  \Sh^\un_{\Lambda}(B) &
\Sh^\un_{\Lambda}(B)^0_! \subset  \Sh^\un_{\Lambda}(B) }
$$
of complexes $\cF \in \Sh^\un_{\Lambda}(B) $
with no sections and no  compactly-supported sections 
$$
\xymatrix{
\Gamma(B, \cF) \simeq 0
&
\Gamma_c(B, \cF) \simeq 0
}$$
Then the natural functor $\Sh^\un_{\Lambda}(B)\to \mu \Sh^\un_{\Lambda}(\Omega)$ restricts to equivalences
$$
\xymatrix{
\Sh^\un_{\Lambda}(B)^0_*\ar[r]^-\sim  & \mu \Sh^\un_{\Lambda}(\Omega)
&
\Sh^\un_{\Lambda}(B)^0_!\ar[r]^-\sim  & \mu \Sh^\un_{\Lambda}(\Omega)
}
$$

\begin{remark}\label{rem:cases}
In fact, one can reduce Case 1)  to Case 2) by passing to $Z' = Z\times \BR$,  $\Lambda' = \Lambda \times T^*_{\{0\}} \BR \subset T^*Z \times T^* \BR \simeq T^* Z' $, and $((z, 0), (0, 1)) \in T^*Z \times T^* \BR \simeq T^* Z'$ . 
\end{remark}

We similarly introduce  the full dg subcategory $\mu\Sh_{\Lambda}(\Omega)\subset \mu\Sh^\un_{\Lambda}(\Omega)$
  of microlocal sheaves on $\Omega$ supported along~$\Lambda$. 
It is constructed as above but working with constructible sheaves rather than large constructible sheaves. So for example, in case 2) above, if we denote by $\Sh_{\Lambda}(B, \Omega)\subset \Sh(B)$
 the full dg subcategory of objects $\cF\in \Sh(B)$ with singular support satisfying $\ssupp(\cF) \cap \Omega \subset \Lambda$,
then there is a natural equivalence
$$
\xymatrix{
\Sh_{\Lambda}(B, \Omega)/K(B, \Omega)\ar[r]^-\sim  & \mu \Sh_{\Lambda}(\Omega)
}
$$
where  $K(B, \Omega)\subset\Sh_{\Lambda}(B, \Omega)$ denotes the full dg subcategory of 
 objects $\cF\in \Sh(B)$ with singular support satisfying $\ssupp(\cF) \cap \Omega = \emptyset$.

The following analogous  formal properties again follow directly from the constructions. 
 The dg category $\mu\Sh_{\Lambda}(\Omega)$  is the sections of a subsheaf $\mu\Sh_{\Lambda}\subset  \mu\Sh^\un_{\Lambda}$
 of full dg subcategories supported along $\Lambda$.
   Given a Whitney stratification of $\Lambda$ such that the restriction of $\mu\Sh^\un_{\Lambda}$  to each stratum is locally constant, the restriction of $\mu\Sh_{\Lambda}$  to each stratum will also be locally constant.
  Finally, given a closed embedding of conic Lagrangian subvarieties $\Lambda'\subset \Lambda$,
  the full embedding $\mu\Sh^\un_{\Lambda'} \subset \mu\Sh^\un_{\Lambda}$ restricts to a full embedding
  $\mu\Sh_{\Lambda'} \subset \mu\Sh_{\Lambda}$.

\begin{example}\label{ex cod 1}
Suppose $Z = \BR$.
Inside of $T^*\BR \simeq \BR\times \BR$, introduce the conic Lagrangian subvariety and  conic open subspace 
$$
\xymatrix{
 \Lambda =   \{(t, 0) \, |\, t\in \BR\}  \cup \{(0, \eta) \, |\, \eta \in \BR_{\geq 0}\}    &
  \Omega   =    \{(t, \eta) \, |\, t\in \BR,  \eta\in \BR_{\geq 0}\} 
 }
 $$

Then there are canonical equivalences
$$
\xymatrix{
\Mod_k \ar[r]^-\sim &  \Sh^\un_\Lambda ( Z)^0_! \ar[r]^-\sim & \mu \Sh^\un_\Lambda(\Omega)
&
V \ar@{|->}[r] & j_{+*}p_+^*V
}
$$
induced by the correspondence
$$
\xymatrix{
pt & \ar[l]_-{p_+}  \BR_{<0} \ar@{^(->}[r]^-{j_+}  & \BR
}
$$

Similarly,  there are canonical equivalences
$$
\xymatrix{
\Mod_k \ar[r]^-\sim &  \Sh^\un_\Lambda ( Z)^0_* \ar[r]^-\sim & \mu \Sh^\un_\Lambda(\Omega)
&
V \ar@{|->}[r] & j_{-!}p_-^! V
}
$$
induced by the correspondence
$$
\xymatrix{
pt & \ar[l]_-{p_-}  \BR_{<0} \ar@{^(->}[r]^-{j_-}  & \BR
}
$$

Furthemore, the composite functors are naturally equivalent
$$
\xymatrix{
j_{-!}p_-^! \simeq j_{+*}p_+^* :\Mod_k \ar[r]^-\sim &  \mu\Sh^\un_\Lambda(\Omega)
}
$$
An inverse equivalence is given by the hyperbolic localization
$$
\xymatrix{
\eta: \mu\Sh^\un_\Lambda( Z) \ar[r]^-\sim &\Mod_k  
&
\eta(\cF) = p_*q^! \cF
}
$$
built from the correspondence
$$
\xymatrix{
pt & \ar[l]_-{p}  \BR_{\geq 0} \ar@{^(->}[r]^-{q}  & \BR
}
$$
%
 
\end{example}

\begin{remark}

For a conic Lagrangian subvariety $\Lambda\subset T^*Z$, 
with  antipodal  subvariety  $-\Lambda \subset T^*Z$,
and conic open subspace $\Omega\subset T^*Z$, 
with antipodal subspace $-\Omega\subset T^*Z$,
Verdier duality induces an involutive  equivalence
$$
\xymatrix{
\bD_Z:\mu\Sh_{\Lambda}(\Omega)^{\op} \ar[r]^-\sim & \mu\Sh_{-\Lambda}(-\Omega)
}
$$
\end{remark}


\subsection{Microstalks}

Let $\Lambda \subset T^*Z$  be a closed conic Lagrangian subvariety.

Fix  $\cS=\{Z_\alpha\}_{\alpha\in A}$  a Whitney stratification  of $Z$ such that 
$$
\xymatrix{
\Lambda \subset T^*_\cS Z = \coprod_{\alpha \in A} T^*_{Z_\alpha} Z
}
$$

To each stratum $Z_\alpha  \subset Z$, introduce the frontier 
$\partial (T^*_{Z_\alpha} Z)  = \ol{T^*_{Z_\alpha} Z} \setminus T^*_{Z_\alpha} Z \subset T^*Z$
of its conormal bundle,
and  the dense, open, smooth locus of their complement
$$
\xymatrix{
(T^*_\cS Z)^\circ = T^*_\cS Z \setminus \bigcup_{\alpha \in A} \partial (T^*_{Z_\alpha} Z) 
}
$$
Introduce 
  the corresponding dense, open, smooth locus
  $$
  \xymatrix{
  \Lambda^\circ =  \Lambda \cap (T^*_\cS Z)^\circ  \subset \Lambda
  }
  $$
Note that $\Lambda^\circ$ depends on the stratification $\cS$, in particular refining $\cS$ leads to smaller  $\Lambda^\circ$, but it is always 
 dense, open, and smooth within $\Lambda$.

Fix a point $(z, \xi) \in \Lambda^\circ$.
Let $B\subset Z$ be a small open ball around $z\in Z$,
and $f:B\to \BR$ a compatible test function in the sense that that $f(z) = 0$ and $df|_z = \xi$.  
Let $L\subset T^*Z$ be the graph of $df$, and assume that
$L$ intersects $\Lambda^\circ$  transversely at the single point $(z, \xi) \in \Lambda^\circ$.
Note that from the graph $L\subset T^*Z$, we can reconstruct the ball $B = \pi(L) \subset Z$, and the compatible test function $f:B\to \BR$ by recalling $f(z) = 0$ and $L\subset T^*Z$ is the graph of $df$.

 \begin{defn}
 Let $\Omega\subset T^*Z$ be a  conic open subspace containing $(z, \xi) \in \Lambda^\sm$.
 
 Define the {\em microstalk} along $L\subset T^*X$  to be the vanishing cycles
  $$
 \xymatrix{
 \phi_L:\mu\Sh_\Lambda^\un(\Omega)\ar[r] & \Mod_k
&
 \phi_L(\cF) = \Gamma_{\{f\geq 0\}}(B, \tilde \cF|_B) 
 }
 $$
 where $\tilde \cF \in \Sh_\Lambda^\un(B, \Omega_B)$ represents the restriction of $\cF\in \mu\Sh_\Lambda^\un(\Omega)$
 to a small conic open neighborhood $\Omega_B\subset \Omega$ of the point $(z, \xi) \in T^*Z$.
  \end{defn}

\begin{remark}
The microstalk is well-defined since by construction it vanishes on the kernel of the localization $\Sh_\Lambda^\un(B, \Omega_B)\to \mu\Sh_\Lambda^\un(\Omega_B)$ with respect to singular support.
\end{remark}

\begin{remark}
Let us view the graph $L\subset T^*Z$
of the compatible test function $f:B\to \BR$
as a small Lagrangian ball centered at $(x, \xi) \in T^*Z$.
Introduce the symplectic tangent space $V = T_{(z, \xi)} T^*Z$ 
and its Lagrangian subspaces
$$
\xymatrix{
\ell = T_{(z, \xi)} L
&
F = T_{(z, \xi)} (T^*_z Z)
&
\lambda =  T_{(z, \xi)} \Lambda^\circ
}$$ 

As a small Lagrangian ball centered at $(x, \xi) \in T^*Z$,
the only constraints on $L\subset T^*Z$
 are that $\ell$ is transverse to both $F$ and $\lambda$ inside of $V$.
We can regard $\lambda$ as the graph
of a symmetric linear map $F\to \ell \simeq F^*$, and hence as a quadratic form  $q\in \on{Sym}^2(F^*)$
to which we can assign the index $i(q) = \#\{\mbox{negative eigenvalues}\}$. 
If $Z_\alpha \subset Z$ is the stratum containing $z\in Z$,  
then the restriction $f|_{B\cap Z_\alpha}$ has a Morse singularity at $z\in Z_\alpha$ of  the form
 $ - \sum_{i=1}^{i(q)} z_i^2 + \sum_{i=i(q) +1}^{\dim Z_\alpha} z_i^2$.

The microstalk $\phi_L$ only depends on the Hamiltonian isotopy class of $L\subset T^*Z$
as a small Lagrangian ball centered at $(x, \xi) \in T^*Z$.
The (not necessarily contractible) connected components of the  Hamiltonian isotopy classes are parameterized by the index
$i(q)$. 
 For two such graphs $L, L'\subset T^*Z$, one can find an equivalence of the shifted functors
 $$
 \xymatrix{
 \phi_{L'}[i(q)] \simeq \phi_{L}[i(q')]
 }
 $$
\end{remark}

\begin{lemma}\label{lemma: microstalks}
An object $\cF\in \mu\Sh_\Lambda^\un(\Omega)$ is trivial if and only if all of its microstalks
are trivial.
An object $\cF\in \mu\Sh_\Lambda^\un(\Omega)$
 lies in the full subcategory 
$\mu\Sh_\Lambda(\Omega)\subset \mu\Sh_\Lambda^\un(\Omega)$ if and only if all of its microstalks 
are perfect $k$-modules.
\end{lemma}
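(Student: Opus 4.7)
The plan is to first reduce to a local statement on $\Lambda$, then establish the lemma at smooth points via a direct identification, and finally pass to the arboreal model for singular points.

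For the reduction, both the condition of being the zero object and the condition of lying in the full subcategory $\mu\Sh_\Lambda(\Omega)\subset \mu\Sh_\Lambda^\un(\Omega)$ can be checked on the restrictions to a cover by conic open subspaces, since $\mu\Sh_\Lambda^\un$ is a sheaf on $\Lambda$ and $\mu\Sh_\Lambda$ is a full subsheaf. The same is true of the condition that all microstalks vanish, or are perfect, provided one exhausts $\Lambda^\circ$ by covering. Hence it suffices to prove both assertions for $\Omega$ a small conic neighborhood of an arbitrary point $(z,\xi)\in\Lambda$.

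If $(z,\xi)\in\Lambda^\circ$ is a smooth point, then after applying a contact transformation we may arrange that the germ of $\Lambda$ at $(z,\xi)$ has the form considered in Example~\ref{ex cod 1} (and Remark~\ref{rem:cases} to handle the case $\xi=0$). For a sufficiently small $\Omega$, there is then a canonical equivalence $\mu\Sh_\Lambda^\un(\Omega)\simeq \Mod_k$ identifying the microstalk $\phi_L$ for a transverse $L$ with (a shift of) the identity functor, and identifying $\mu\Sh_\Lambda(\Omega)\subset \mu\Sh_\Lambda^\un(\Omega)$ with $\Perf_k\subset \Mod_k$. Under this identification both assertions of the lemma are tautological.

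For a singular point $(z,\xi)\in\Lambda\setminus\Lambda^\circ$, I would apply the non-characteristic deformation of~\cite{Nexp} to replace the germ of $\Lambda$ by the germ of a nearby conic Lagrangian $\Lambda_\arb$ with arboreal singularities. This deformation induces an equivalence of the associated dg categories of large microlocal sheaves, and carries microstalks at smooth points of $\Lambda$ to microstalks at smooth points of $\Lambda_\arb$. Then by~\cite{Narb}, in a small conic neighborhood of an arboreal singularity the dg category $\mu\Sh_{\Lambda_\arb}^\un$ is equivalent to the dg category of modules over a directed tree $\cT$, with microstalks at smooth points of $\Lambda_\arb$ corresponding precisely to the evaluation functors at the vertices of $\cT$, and with the subcategory $\mu\Sh_{\Lambda_\arb}\subset \mu\Sh_{\Lambda_\arb}^\un$ corresponding to modules with perfect $k$-module values. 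For tree modules, a module is the zero module if and only if its value at every vertex is zero, and it is a perfect $\cT$-module if and only if its value at every vertex is a perfect $k$-module, so both assertions follow.

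The main obstacle is of a bookkeeping nature: one must verify that the collection of microstalks on $\Lambda^\circ$ matches the collection of vertex evaluations on the arboreal model, both after the contact normalization at smooth points and after the non-characteristic deformation at singular ones. The required compatibilities are built into the constructions of~\cite{Narb, Nexp}, so no genuinely new input is needed, but care is required in matching the combinatorial data of $\cT$ to the local stratified structure of $\Lambda$ near each singular point.
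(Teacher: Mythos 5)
Your proposal is correct, but it takes a genuinely different — and noticeably heavier — route than the paper. The paper's own proof stays entirely inside the local generic model: after reducing to the germ of $\Lambda$ and choosing a contact transformation so the front projection $Y=\pi^\oo(\Lambda^\oo)\subset B$ is a finite hypersurface, it identifies $\mu\Sh^\un_\Lambda(\Omega)$ with $\Sh^\un_\Lambda(B)^0_*$ (sheaves on $B$ with vanishing global sections). The vanishing of sections forces $\cF$ to vanish on the component of $B\setminus Y$ singled out by the coorientation, and then propagating across walls shows each stalk is an inductive (finite) extension of microstalks. Both assertions then follow immediately: a finite extension of zeros is zero, and a finite extension of perfect modules is perfect. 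This is elementary and self-contained. By contrast, you reduce to the smooth-point case via Example~\ref{ex cod 1} and handle singular points by invoking the non-characteristic arboreal deformation of~\cite{Nexp} and the combinatorial model of~\cite{Narb} — precisely the machinery the paper reserves for the harder duality statement (Theorem~\ref{thm:duality}). Your argument is sound (in the arboreal model, a tree module vanishes iff every vertex evaluation vanishes and is perfect iff every vertex evaluation is perfect, and vertex evaluations match microstalks), but it imports a black box that the lemma itself does not need, and your last paragraph correctly flags that the matching of microstalks with vertex evaluations would still require care to document. The paper's route avoids that bookkeeping entirely.
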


\begin{proof}
It suffices to prove the assertion locally in $\Omega$. Then as discussed in Remark~\ref{rem:cases},
we may place ourselves in the generic situation of Case 2) outlined in Section~\ref{s:microsh} above.
Adopting the notation therein, the localization functor 
restricts to an equivalence
$$
\xymatrix{
\Sh^\un_{\Lambda}(B)^0_*\ar[r]^-\sim  & \mu \Sh^\un_{\Lambda}(\Omega)
}
$$
Thus it suffices to prove the assertions for
an object $\cF\in \Sh^\un_{\Lambda}(B)^0_*$.

The condition that $\Gamma(B, \cF) \simeq 0$ implies that $\cF$ vanishes on the component of the complement of the front projection $Y \subset B$ to which the coorientation points. Crossing to other components in the complement of $Y\subset B$, we see that any stalk of $\cF$ is an inductive extension of  its microstalks. This implies the assertions.
\end{proof}


\subsection{Wrapped microlocal sheaves}

Now we arrive at the main objects of study of this paper.

 \begin{defn}Fix $\Lambda \subset T^*Z$   a closed conic Lagrangian subvariety,
 and let $\Omega\subset T^*Z$ be a conic open subspace. 
 
 Define the small dg category  $\mu\Sh_\Lambda^w(\Omega)$ of {\em wrapped microlocal sheaves} on $\Omega$ supported along $\Lambda$ to be the full dg subcategory
 of   compact objects within the dg category $\mu\Sh_\Lambda^\un(\Omega)$ of large microlocal sheaves.
  \end{defn}

\begin{remark}
Given the full dg subcategory $\cC_c\subset \cC$ of compact objects in a stable cocomplete dg category,
the canonical map is an equivalence $ \Ind \cC_c \risom \cC$. Thus we have 
$$
\xymatrix{ 
\Ind \mu\Sh_\Lambda^w(\Omega) \ar[r]^-\sim & \mu\Sh_\Lambda^\un(\Omega) 
}
$$
\end{remark}

 The notion of compact objects in the above definition is useful in theoretical developments, but there is a more concrete geometric characterization. 
 Recall the microstalk functors
  $
   \phi_L:\mu\Sh_\Lambda^\un(\Omega)\to \Mod_k
 $
 introduced in the previous section.
 Note that $\phi_L$ preserves products, hence admits a left adjoint
 $
   \phi^\ell_L: \Mod_k\to \mu\Sh_\Lambda^\un(\Omega),
 $
 and also preserves coproducts, hence the left adjoint $ \phi^\ell_L$ preserves compact objects.
 
\begin{defn}
Define the {\em microlocal skyscraper} $\cF_L = \phi_L^\ell(k)  \in \mu\Sh_\Lambda^w(\Omega)$ to be the object
 corepresenting the microlocal stalk
$$
\xymatrix{
\phi_L(\cF) \simeq \Hom(\cF_L, \cF) &
\cF\in  \mu\Sh_\Lambda^\un(\Omega)
}
$$
\end{defn}

\begin{lemma}\label{lem:splitgen}
The dg category  $\mu\Sh_\Lambda^w(\Omega)$ of wrapped microlocal sheaves is split-generated by the 
 microlocal skyscrapers $\cF_L\in \mu\Sh_\Lambda^w(\Omega)$.
\end{lemma}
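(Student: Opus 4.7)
The plan is to reduce the statement to the standard compactly-generated category theorem: in a presentable stable dg category, a set of compact objects split-generates the subcategory of compact objects as soon as it detects zero objects. So I need to verify two ingredients, namely that each $\cF_L$ is indeed compact, and that the family $\{\cF_L\}$ jointly detects zero; then the lemma follows by invoking the abstract result.

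First I would check compactness. By construction $\cF_L = \phi_L^\ell(k)$, where $\phi_L^\ell$ is left adjoint to the microstalk functor $\phi_L:\mu\Sh_\Lambda^\un(\Omega)\to \Mod_k$. The excerpt already records that $\phi_L$ preserves coproducts (it is a composition of the localization $\mu\Sh_\Lambda^\un(\Omega)\to \mu\Sh_\Lambda^\un(\Omega_B)$ with the vanishing cycles functor realized by hyperbolic restriction, both of which preserve coproducts). A left adjoint to a coproduct-preserving functor carries compact objects to compact objects, and $k\in\Mod_k$ is compact, so $\cF_L\in \mu\Sh_\Lambda^w(\Omega)$.

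Second I would check that $\{\cF_L\}$ detects zero. If $\cG\in \mu\Sh_\Lambda^\un(\Omega)$ satisfies $\Hom(\cF_L,\cG)\simeq 0$ for every transverse Lagrangian ball $L$ centered at a point of $\Lambda^\circ$, then by the corepresentability of microstalks we have $\phi_L(\cG)\simeq 0$ for all such $L$. Lemma~\ref{lemma: microstalks} then forces $\cG\simeq 0$. Thus the set of microlocal skyscrapers is a set of compact generators of $\mu\Sh_\Lambda^\un(\Omega)$.

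Finally I would invoke the standard fact (Neeman's theorem in the triangulated setting, or its presentable $\infty$-categorical upgrade) that in a compactly generated stable dg category, any set of compact objects that detects zero generates the whole category under colimits, and correspondingly split-generates the subcategory of compact objects. Applying this to $\mu\Sh_\Lambda^\un(\Omega)$ with the generating set $\{\cF_L\}$ yields that $\mu\Sh_\Lambda^w(\Omega)$ is split-generated by the microlocal skyscrapers, as claimed. The only step that requires a nontrivial geometric input, rather than categorical formalism, is the detection of zero objects via microstalks; this is precisely Lemma~\ref{lemma: microstalks}, so no further work is needed beyond the two verifications above.
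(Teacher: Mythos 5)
Your proof takes essentially the same approach as the paper: the paper likewise establishes compactness of each $\cF_L$ via the coproduct-preservation of $\phi_L$ (in the discussion immediately preceding the definition of microlocal skyscrapers), deduces compact generation of $\mu\Sh_\Lambda^\un(\Omega)$ from Lemma~\ref{lemma: microstalks}, and then invokes the same abstract fact that compact generators of $\Ind\cC_c$ split-generate $\cC_c$. Your write-up simply makes explicit the two ingredients that the paper's one-line proof packages as ``by Lemma~\ref{lemma: microstalks}, the microlocal skyscrapers compactly generate.''
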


\begin{proof}
By Lemma~\ref{lemma: microstalks}, the  microlocal skyscrapers $\cF_L\in \mu\Sh_\Lambda^w(\Omega)$
compactly generate  $\mu\Sh_\Lambda^\un(\Omega) \simeq \Ind \mu\Sh_\Lambda^w(\Omega) $.
Thus we may invoke the general fact that  if a collection  of objects of a small stable dg category  $\cC_c$ generates the ind-category
$\cC \simeq \Ind \cC_c$, 
then it split-generates $\cC_c$.
\end{proof}

Now let us record some of the evident functoriality enjoyed by wrapped microlocal sheaves.

First, recall that for conic open subspaces $\Omega \subset T^*Z$, the dg category $\mu\Sh_\Lambda^\un(\Omega)$ of large microlocal sheaves
is the sections  of a sheaf $\mu\Sh_\Lambda^\un$ of dg categories supported along $\Lambda$.
 Observe that for an inclusion $\Omega_{Z'} \subset \Omega$ of conic open subspaces,  the restriction map 
 $
\rho: \mu\Sh_\Lambda^\un(\Omega)\to \mu\Sh_\Lambda^\un(\Omega_{Z'})
  $
preserves products,
hence admits a left adjoint
 $
 \rho^\ell: \mu\Sh_\Lambda^\un(\Omega_{Z'}) \to \mu\Sh_\Lambda^\un(\Omega_{Z'}),
 $
and also preserves coproducts, hence the left adjoint $\rho^\ell$ preserves compact objects.
Thus the restriction of $\rho^\ell$ to compact objects provides a natural corestriction functor
$$
\xymatrix{
 \rho^w: \mu\Sh_\Lambda^w(\Omega_{Z'}) \ar[r] & \mu\Sh_\Lambda^w(\Omega_{Z})
 }
 $$

\begin{prop}

The dg categories $\mu\Sh_\Lambda^w(\Omega)$, for conic open subspaces $\Omega \subset T^*Z$, and corestriction functors
$\rho^w\mu\Sh_\Lambda^w(\Omega) \to \mu\Sh_\Lambda^w(\Omega'_Z)$, for inclusions $\Omega \subset \Omega'$, 
 assemble into a cosheaf $\mu\Sh_\Lambda^w$  of dg categories supported along $\Lambda$. 
   Furthermore, there exists a Whitney stratification of $\Lambda$ such that the restriction of $\mu\Sh^w_{\Lambda}$  to each stratum is locally constant. 
\end{prop}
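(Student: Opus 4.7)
My plan is to deduce both claims from the corresponding properties of the sheaf $\mu\Sh_\Lambda^\un$ of large microlocal sheaves by passage to left adjoints. For each inclusion $\Omega'\subset\Omega$ of conic open subspaces, the restriction $\rho\colon \mu\Sh_\Lambda^\un(\Omega)\to\mu\Sh_\Lambda^\un(\Omega')$ preserves both products and coproducts, hence admits a left adjoint $\rho^\ell$ which preserves compact objects. The induced functor on compacts $\rho^w\colon\mu\Sh_\Lambda^w(\Omega')\to\mu\Sh_\Lambda^w(\Omega)$ is by definition the corestriction. Accordingly, $\mu\Sh_\Lambda^\un$ can be promoted to a sheaf with values in the $\oo$-category $\dgSt_k^c$ of cocomplete dg categories with continuous functors preserving compacts, and $\mu\Sh_\Lambda^w\simeq\kappa\circ\mu\Sh_\Lambda^\un$ under the inverse equivalence $\kappa\colon\dgSt_k^c\risom\dgst_k$ of taking compact objects.

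For the cosheaf property, fix a conic open cover $\Omega=\bigcup_i\Omega_i$. The sheaf condition for $\mu\Sh_\Lambda^\un$ produces a limit diagram
$$
\mu\Sh_\Lambda^\un(\Omega)\simeq \lim_I \mu\Sh_\Lambda^\un\Bigl(\bigcap_{i\in I}\Omega_i\Bigr)
$$
over the \v Cech diagram, with structure maps the restrictions $\rho$. Passing to left adjoints converts this into a colimit diagram, computed in $\dgSt_k^c$, with structure maps the $\rho^\ell$. Applying $\kappa$, which preserves colimits, yields
$$
\mu\Sh_\Lambda^w(\Omega)\simeq \colim_I \mu\Sh_\Lambda^w\Bigl(\bigcap_{i\in I}\Omega_i\Bigr)
$$
with structure maps the corestrictions $\rho^w$. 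This is exactly the cosheaf condition.

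For the local constancy, I would start from a Whitney stratification of $\Lambda$ such that the restriction of $\mu\Sh_\Lambda^\un$ to each stratum is locally constant, as supplied in Section~\ref{s:microsh}. When a restriction $\rho$ between sufficiently small conic opens over a single stratum is an equivalence, its left adjoint $\rho^\ell$ is automatically an equivalence as well, and passage to compact objects produces an equivalence $\rho^w$. Hence the same Whitney stratification witnesses the local constancy of $\mu\Sh_\Lambda^w$.

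The main obstacle, such as it is, lies in the $\oo$-categorical book-keeping in the first step: that passage to left adjoints genuinely transforms the sheaf diagram for $\mu\Sh_\Lambda^\un$ into the claimed cosheaf diagram, levelwise compatibly with taking compacts. This rests on the standard fact that the $\oo$-category of presentable dg categories with left adjoints is equivalent, via passage to right adjoints, to the opposite of the same category with right adjoints, interchanging limits and colimits; combined with the fact that $\Ind\colon\dgst_k\risom\dgSt_k^c$ preserves colimits. No further geometric input beyond what has already been established for $\mu\Sh_\Lambda^\un$ should be required.
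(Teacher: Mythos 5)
Your proof is correct and follows the same argument as the paper's: the cosheaf condition for $\mu\Sh_\Lambda^w$ is deduced from the sheaf condition for $\mu\Sh_\Lambda^\un$ by passing to left adjoints (converting limits of the right-adjoint diagram into colimits of the left-adjoint diagram) and then taking compact objects via $\kappa$, and local constancy follows from that of $\mu\Sh_\Lambda^\un$ by the same adjunction. One small imprecision in your first paragraph: it is the covariant functoriality given by the $\rho^\ell$, i.e.\ the cosheaf structure, that takes values in $\dgSt_k^c$, since the restriction maps $\rho$ themselves need not preserve compact objects — but your second paragraph handles the passage correctly, so this does not affect the argument.
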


\begin{proof}
Fix a conic open subspace $\Omega \subset T^*Z$.

Fix a cover $\{\Omega_i\}_{i\in I}$ by conic open subspaces $\Omega_i \subset \Omega$.

For the first assertion, it suffices to show that the natural map
$$
\xymatrix{
\colim_{i\in I} \mu\Sh_\Lambda^w(\Omega_i)\ar[r] & \mu\Sh_\Lambda^w(\Omega)
}
$$
 is an equivalence.
Colimits in the $\oo$-category of small stable dg categories and exact functors are calculated by taking the compact objects in the colimit of the induced diagram  in the $\oo$-category of  cocomplete stable dg categories
and continuous functors. Thus it suffices to show the natural map 
$$
\xymatrix{
\colim_{i\in I} \mu\Sh_\Lambda^\un(\Omega_i) \ar[r] &  \mu\Sh_\Lambda^\un(\Omega)\
}
$$
 is an equivalence.
To  calculate colimits of cocomplete stable dg categories
and continuous functors, we may pass to the opposite $\oo$-category of 
cocomplete stable dg categories and right adjoints.
 Thus it suffices to show the natural map 
$$
\xymatrix{
\lim_{i\in I} \mu\Sh_\Lambda^\un(\Omega_i)  & \ar[l]  \mu\Sh_\Lambda^\un(\Omega)\
}
$$
 is an equivalence.
 Since the corestriction maps $\rho^\ell$ were defined to be the left adjoints to the restrictions maps $\rho$, 
  this is precisely a limit diagram satisfied by the sheaf $\mu\Sh^\un_\Lambda$.
  
  Finally, for the second assertion, recall 
   there exists a Whitney stratification of $\Lambda$ such that the restriction of $\mu\Sh^\un_{\Lambda}$  to each stratum is locally constant. In other words,  for small conic open neighborhoods $\Omega\subset T^*Z$
    along a stratum,
   the restriction maps between the sections $\mu\Sh^\un_{\Lambda}(\Omega)$
  are equivalences. 
   Thus  the left adjoints of the restrictions are the inverse equivalences,
   and the full dg subcategory $\mu\Sh^w_{\Lambda}(\Omega)$ of compact objects 
   is thus locally constant along the stratum.
    \end{proof}
    
    Next, recall that given a closed embedding of conic Lagrangian subvarieties $\Lambda'\subset \Lambda$, there
    is a natural full embedding $i:\mu\Sh^\un_{\Lambda'}\to \mu\Sh^\un_{\Lambda}$ of sheaves of dg categories.
    Observe that $i$ preserves products,
hence admits a left adjoint
 $
 i^\ell: \mu\Sh_\Lambda^\un \to \mu\Sh_{\Lambda'}^\un,
 $
and also preserves coproducts, hence the left adjoint $i^\ell$ preserves compact objects.
Thus the restriction of $i^\ell$ to compact objects provides a natural essentially surjective functor
$$
\xymatrix{
 i^w: \mu\Sh_\Lambda^w \ar[r] & \mu\Sh_{\Lambda'}^w
 }
 $$

The evaluation of $i^w$  on a microlocal skyscraper $\cF_L  \in \mu\Sh_\Lambda^w(\Omega)$ 
is straightforward. 
On the one hand, if the small Lagrangian ball $L\subset T^*Z$ is centered at a point $(x,\xi)\in \Lambda^\circ$
that is not contained in $\Lambda' \subset \Lambda$, then $i^w(\cF_L) \simeq 0$. 
On the other hand, if the small Lagrangian ball $L\subset T^*Z$ is centered at a point $(x,\xi)\in \Lambda^\circ$
 contained in $\Lambda' \subset \Lambda$, then $i^w(\cF_L)$ simply represents  the restriction of the microlocal stalk
 functor to sections of  $\mu\Sh_{\Lambda'}^\un \subset  \mu\Sh_\Lambda^\un$.


\subsection{Wrapped constructible sheaves}

We will describe here a special case of the preceding constructions.
 Fix  $\cS=\{Z_\alpha\}_{\alpha\in A}$  a Whitney stratification  of $Z$,
 and recall the conormal Lagrangian 
$ T^*_\cS Z = \coprod_{\alpha \in A} T^*_{Z_\alpha} Z
$

 \begin{defn} 
 Define the small dg category  $\Sh_\cS^w(Z)$ of {\em wrapped constructible sheaves} 
 to be the full dg subcategory  of   compact objects within the dg category $\Sh_\cS^\un(T^* Z)$ of large constructible sheaves.
  \end{defn}
  
  \begin{remark}
  Starting from the canonical equivalence 
  $
  \Sh_\cS^\un(Z) \simeq  \mu\Sh^\un_{T^*_\cS Z}(T^*Z)
  $
 and passing to compact objects, we obtain a canonical equivalence
  $
  \Sh_\cS^w(Z) \simeq \mu \Sh_{T^*_\cS Z}^w(T^*Z).
  $
  Thus we can view  wrapped constructible sheaves as a special case of  wrapped microlocal sheaves.  
  \end{remark}


\subsubsection{Local systems}

Let us first focus on the case of a single stratum $\cS = \{Z\}$.

Let $PZ$ be the Poincar\'e $\oo$-groupoid of $Z$. If we choose a base point $z_i \in Z$ representing each connected component $[z_i]\in \pi_0(Z)$, then there is a canonical equivalence of $\oo$-groupoids 
$$
\xymatrix{
\coprod_{[z_i]\in \pi_0(Z)} \Omega_{z_i} Z \ar[r]^-\sim & PZ
}
$$

Let us introduce the respective versions of large, traditional, and wrapped local systems as
the  functor categories 
$$
\xymatrix{
\Loc^\un(Z)= \Mod_k(PZ)
&
\Loc(Z)= \Perf_k(PZ)
&
\Loc^w(Z)=  PZ\on{-Perf}_k
}
$$
Thus a local system $\cL\in\Loc^\un(Z)$ lies in the small dg subcategory $\Loc(Z) \subset \Loc^\un(Z)$
  if and only if for each $z_i \in Z$ the restriction $\cL(z_i)$ is perfect as a 
$k$-module.
A local system $\cL\in\Loc^\un(Z)$ lies in the small dg subcategory $\Loc^w(Z) \subset \Loc^\un(Z)$
of  compact objects  if and only if for each $z_i \in Z$ the restriction $\cL(z_i)$ is perfect as a 
$ \Omega_{z_i}Z$-module.

We have  canonical equivalences
$$
\xymatrix{
\Sh_Z^\un(Z) \simeq \Loc^\un(Z)
&
\Sh_Z(Z) \simeq \Loc(Z)
&
\Sh_Z^w(Z) \simeq \Loc^w(Z)
}
$$

\begin{example}
For a specific example, take the circle $Z= S^1$. 

Then we have $PS^1 \simeq \Omega S^1 \simeq \BZ$, and set $\G_m = \Spec k[\BZ]$.

We have the canonical equivalences
$$
\xymatrix{
\Loc^\un(Z)= \QCoh(\G_m)
&
\Loc(Z)= \Coh_\proper(\G_m)
&
\Loc^w(Z)=   \Coh(\G_m)
}
$$
where  $\Coh_\proper(\G_m) \subset \QCoh(\G_m)$ denotes the full dg subcategory of 
coherent complexes with proper support.

\end{example}


\subsubsection{Stratifications}

Now we consider the general case
of a Whitney stratification $\cS=\{Z_\alpha\}_{\alpha\in A}$.

Let $P_\cS Z$ be the exit path $\oo$-category of $\cS$. For any point $z\in Z$,
its automorphisms are the based loop space $\Aut_{P_\cS}(z) \simeq \Omega_{z} Z_\alpha$ of the stratum
$Z_\alpha \subset Z$ containing $z\in Z$.

We have  canonical equivalences
$$
\xymatrix{
\Sh_\cS^\un(Z) \simeq \Mod_k(P_\cS Z)
&
\Sh_\cS(Z) \simeq \Perf_k(P_\cS Z)
&
\Sh_\cS^w(Z) \simeq P_\cS Z\on{-\Perf}_k
}
$$

Thus a sheaf $\cF\in\Sh_\cS^\un(Z)$ lies in the small dg subcategory $\Sh_\cS(Z) \subset \Sh_\cS^\un(Z)$
  if and only if for each $z \in Z$, the stalk $\cF_z$ is perfect as a 
$k$-module.
A sheaf $\cF\in \Sh_\cS^\un(Z)$ lies in the small dg subcategory $\Sh_\cS^w(Z)\subset \Sh_\cS^\un(Z)$
of  compact objects  if and only if for each $z \in Z$, the stalk $\cF_z$ is perfect as a 
$ \Omega_{z}Z_\alpha$-module where $Z_\alpha \subset Z$ is the stratum containing $z\in Z$. 

To see this last assertion, note that we have the compact generators $i_{\alpha!} \cL_\alpha \in \Sh_\cS^w(Z)$,
 for the
inclusion  $i_\alpha:Z_\alpha\to Z$ of a stratum, and $\cL_\alpha = p_{\alpha !} k_{\tilde Z_\alpha} \in \Loc^w(Z_\alpha)$ the universal local system coming from the universal cover $p_\alpha:Z_\alpha \to Z$. Their
stalks $(i_{\alpha!} \cL_\alpha)_z$ vanish when $z\not \in Z_\alpha$, and for $z\in Z_\alpha$ are 
the regular
$ \Omega_{z}Z_\alpha$-module.

\begin{example}\label{ex:cylinder}
For a specific example, take the circle $Z= T^1 = \BR/\BZ$, with stratification $\cS_1 = \{\{0\},  (0, 1)\}$,
and write $i:\{0\} \to T^1$, $j:(0, 1) \to T^1$ for the inclusions.

Let $K_1$ be the Kronecker quiver category with two objects $a, b$, and two non-identity morphisms $x, y:a\to b$.
 There is a canonical equivalence  $K_1\risom P_{\cS_1} T^1$ sending $a$ to the point $0 \in T^1$, $b$ to the point $1/2\in T^1$,
$x$ to the exit-path $x:[0,1] \to T^1, x(t) = t/2$, $y$ to the exit-path $y:[0,1] \to T^1, y(t) = -t/2$

Thus we have  canonical equivalences
$$
\xymatrix{
\Sh_{\cS_1}^\un(T^1) \simeq \Mod_k(K_1)
&
\Sh_{\cS_1}(T^1) \simeq \Perf_k(K_1)
&
\Sh_{\cS_1}^w(T^1) \simeq K_1\on{-\Perf}_k
}
$$
with an additional equivalence $\Perf_k(K_1) \simeq K_1\on{-\Perf}_k$.

If we set $\BP^1 = \on{Proj} k[x, y]$, then we can rewrite the above as mirror equivalences
$$
\xymatrix{
\Sh_{\cS_1}^\un(T^1) \simeq \QCoh(\BP^1)
&
\Sh_{\cS_1}(T^1) \simeq \Perf(\BP^1)
&
\Sh_{\cS_1}^w(T^1) \simeq \Coh(\BP^1)
}
$$
matching the objects $i_! k_{\{0\}}, j_! \cD_{(0,1)}$ to the respective objects $\cO_{\BP^1}, \cO_{\BP^1}(-1)$.
%
 
 Consider as well the microlocal stalks for the Lagrangian graphs $L_\pm = \Gamma_{\pm d\theta} \subset T^* T^1$ centered at $(0, \pm 1) \in T^*T^1$. They are calculated by the respective hyperbolic restrictions
$$
\xymatrix{
\eta_\pm:\Sh_{\cS_1}^\un(T^1)\ar[r] &  \Mod_k &
\eta_\pm(\cF) = p_* q_\pm^!\cF
}
$$
built from the respective correspondences
 $$
 \xymatrix{
 pt & \ar[l]_-p [0, 1/2) \ar[r]^-{q_+} & T^1
&
 pt & \ar[l]_-p (-1/2, 0] \ar[r]^-{q_-} & T^1
 }
 $$
Note that $\eta_+$, respectively $\eta_-$, vanishes on an object if and only if the  quiver map $y$, respectively $x$, is invertible, 
and they both satisfy  $\eta_\pm(i_! k_{0}) \simeq k$.
Under the mirror equivalences, they
correspond to the $*$-restrictions to the respective poles 
$$
\xymatrix{
f_\pm^*: \QCoh(\BP^1) \ar[r] &  \Mod_k
&
f_+:\{y = 0\} \ar[r] & \BP^1
&
f_-:\{x = 0\} \ar[r] & \BP^1}
$$

%
%
%
%
%
%
%
%
%
%

\end{example}


\subsection{Duality}

This section presents a duality between  microlocal sheaves and wrapped microlocal sheaves
in the form of Theorem~\ref{thm:duality}. The proof we give
is an  application of the theory developed in~\cite{Narb, Nexp}.
For the reader unfamiliar with~\cite{Narb, Nexp}, we will largely use it as a black box,
and also will not appeal to the results of this section elsewhere in the paper.

Let $\Lambda \subset T^*Z$  be a closed conic Lagrangian subvariety,
 and $\Omega\subset T^*Z$  a conic open subspace.

\begin{thm}\label{thm:duality}

The natural hom-pairing provides an equivalence
$$
\xymatrix{
\mu\Sh_\Lambda(\Omega)  \ar[r]^-\sim & \Fun^{ex}(\mu\Sh_\Lambda^w(\Omega)^{op}, \Perf_k) 
}
$$
where $ \Fun^{ex}$ denotes the dg category of exact functors, and $\Perf_k$ that of perfect $k$-modules.
\end{thm}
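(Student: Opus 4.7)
The plan is to follow the outline sketched after Theorem~\ref{thm:introduality} in the introduction, reducing the global assertion to a purely local statement at the germ of an arboreal Lagrangian, where both sides can be identified with finite modules over a directed tree.

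First I would establish the local-to-global reduction. The sheaf structure of $\mu\Sh_\Lambda$ and the cosheaf structure of $\mu\Sh_\Lambda^w$ on $\Lambda$ have been recorded above. Since the functor $\Fun^{ex}(-, \Perf_k)$ carries colimits in $\dgst_k$ to limits, the assignment $\Omega \mapsto \Fun^{ex}(\mu\Sh_\Lambda^w(\Omega)^{op}, \Perf_k)$ is again a sheaf on $\Lambda$. The natural hom-pairing is compatible with the restriction maps on the left and with the transposes of the corestriction maps on the right, so to check it is an equivalence it suffices to check it at each stalk, i.e.\ for $\Omega$ a small conic open neighborhood of a point $(z,\xi) \in \Lambda$.

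Next I would invoke the non-characteristic deformation theory of~\cite{Nexp} to reduce to an arboreal local model. Locally near $(z,\xi)$ one can deform the germ of $\Lambda$ through a non-characteristic Hamiltonian family to a conic Lagrangian $\Lambda_\arb$ with arboreal singularities. Such deformations induce equivalences on $\mu\Sh^\un_{(-)}$, and hence, by passing to compact objects and to the microstalk-perfect subcategory (Lemma~\ref{lemma: microstalks}), equivalences on both $\mu\Sh^w$ and $\mu\Sh$ as well. The hom-pairing is natural with respect to these equivalences, so the problem is transported to $\Lambda_\arb$. Applying the sheaf/cosheaf reduction once more, I would then restrict to the germ of $\Lambda_\arb$ at an arboreal singular point. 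At this point the arboreal calculus of~\cite{Narb} takes over: at the germ of an arboreal singularity there is an equivalence $\mu\Sh_{\Lambda_\arb}^\un \simeq \Mod_k(\cT)$ for a finite rooted tree quiver $\cT$, under which microstalks correspond to evaluation at the vertices and microlocal skyscrapers to the standard projectives. Passing to compact objects yields $\mu\Sh_{\Lambda_\arb}^w \simeq \cT\on{-Perf}_k$, while the microstalk-perfection criterion yields $\mu\Sh_{\Lambda_\arb} \simeq \Perf_k(\cT)$. For a finite acyclic tree quiver without relations these two dg categories coincide and the resulting dg category is smooth and proper; its canonical hom-pairing into $\Perf_k$ is the standard perfect duality pairing indecomposable projectives against their simples. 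This gives the desired equivalence locally, and hence globally.

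The main obstacle will be the bookkeeping of naturality through the two reduction steps. I must verify that the non-characteristic deformation of~\cite{Nexp} carries microlocal skyscrapers to microlocal skyscrapers and intertwines the adjoint pair relating $\mu\Sh_\Lambda$ and $\mu\Sh_\Lambda^w$ to its counterpart for $\Lambda_\arb$, and that the arboreal identification of~\cite{Narb} sends the hom-pairing on $(\mu\Sh_{\Lambda_\arb}, \mu\Sh_{\Lambda_\arb}^w)$ to the standard pairing on $(\Perf_k(\cT), \cT\on{-Perf}_k)$. Once these compatibilities are in place, the final step — recognizing the pairing as perfect for a finite tree quiver — is a direct finite-dimensional linear-algebra check over $k$.
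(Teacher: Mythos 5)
Your proposal follows the paper's own proof essentially verbatim: a local-to-global reduction exploiting the sheaf/cosheaf structures (using that $\Fun^{ex}(\colim(-)^{op},\Perf_k)$ turns colimits into limits), non-characteristic deformation via~\cite{Nexp} to an arboreal model, a second localization to the germ of an arboreal singularity, and the identification of both sides with $\Perf_k(\cT)$ for a directed tree $\cT$, closed off by smoothness and properness of the tree's path algebra. The extra detail you flag at the end — that microstalks go to evaluations at vertices and skyscrapers to projectives, and that the deformations intertwine the pairings — is a reasonable emphasis that the paper leaves implicit, but it is not a different route.
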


\begin{remark}\label{rem:asymmetry}
We caution the reader that while objects of $\mu\Sh_\Lambda^w(\Omega)$ similarly give functionals on
$\mu\Sh_\Lambda(\Omega)$, it is not true that they produce all possible functionals. One could think about the 
specific example 
where $\Lambda = T^1 \subset T^*T^1$ is the zero-section, and $\Omega = T^*T^1$ is the entire cotangent bundle.
Then we have seen
 $\mu\Sh_{T^1}(T^*T^1) \simeq \Perf_\proper(\G_m)$
 and  $\mu\Sh^w_{T^1}(T^*T^1) \simeq \Coh(\G_m)$, and
by~\cite{BNP}, the hom-pairing gives an equivalence
$$
\xymatrix{
 \Perf_\proper(\G_m)  \ar[r]^-\sim & \Fun^{ex}(\Coh(\G_m)^{op}, \Perf_k) 
}
$$
compatibly with Theorem~\ref{thm:duality}.
But clearly there are more functionals on $\Perf_\proper(\G_m)$ than those coming from $\Coh(\G_m)$ alone.
For example, one could take the hom-pairing with a direct sum of skyscraper sheaves at infinitely many points.  
\end{remark}

\begin{remark}
If one prefers  not to have opposite categories appear in Theorem~\ref{thm:duality}, one can take opposite categories on both sides to obtain
$$
\xymatrix{
\mu\Sh_\Lambda(\Omega)^{op}  \ar[r]^-\sim & \Fun^{ex}(\mu\Sh_\Lambda^w(\Omega)^{op}, \Perf_k)^{op}
\simeq \Fun^{ex}(\mu\Sh_\Lambda^w(\Omega), \Perf_k^{op})
}
$$
Then Verdier duality for $\mu\Sh_\Lambda(\Omega)$ identifies its opposite category with
$\mu\Sh_{-\Lambda}(-\Omega)$, and tensor duality for $\Perf_k$ identifies its opposite category with itself.
In this way, one obtains  a canonical equivalence
$$
\xymatrix{
\mu\Sh_{-\Lambda}(\Omega)  \ar[r]^-\sim & \Fun^{ex}(\mu\Sh_\Lambda^w(\Omega), \Perf_k) 
}
$$
\end{remark}


\begin{proof}[Proof of Theorem~\ref{thm:duality}]
First, let us observe that it suffices to prove the assertion locally. Namely, if we choose a cover $\{\Omega_i\}_{i\in I}$ by conic open subspaces $\Omega_i \subset \Omega$, then since $\mu\Sh_\Lambda(\Omega)$
is a sheaf and $\mu\Sh_\Lambda^w(\Omega)$ a cosheaf we have
$$
\xymatrix{
\mu\Sh_\Lambda(\Omega)   \simeq \lim_{i\in I} \mu\Sh_\Lambda(\Omega_i)
&
\mu\Sh^w_\Lambda(\Omega)   \simeq \colim_{i\in I} \mu\Sh^w_\Lambda(\Omega_i)
}
$$
Thus if we have the assertion locally
$$
\xymatrix{
\mu\Sh_\Lambda(\Omega_i)  \ar[r]^-\sim & \Fun^{ex}(\mu\Sh_\Lambda^w(\Omega_i)^{op}, \Perf_k)
}
$$ 
then we have it globally
$$
\xymatrix{
\mu\Sh_\Lambda(\Omega)  
 \simeq \lim_{i\in I} \mu\Sh_\Lambda(\Omega_i)
 \ar[r]^-\sim & \lim_{i\in I} \Fun^{ex}(\mu\Sh_\Lambda^w(\Omega_i)^{op}, \Perf_k) 
}
 $$
$$
\xymatrix{
 \simeq \Fun^{ex}(\colim_{i\in I} \mu\Sh_\Lambda^w(\Omega_i)^{op}, \Perf_k)
\simeq   \Fun^{ex}(\mu\Sh_\Lambda^w(\Omega)^{op}, \Perf_k) 
}
$$ 
(If one were to try to swap the roles of 
$\mu\Sh_\Lambda(\Omega)$
and $\mu\Sh_\Lambda^w(\Omega)$ in the theorem, the above middle equivalence  is what fails and  produces the asymmetry noted in Remark~\ref{rem:asymmetry}.)

Now following Remark~\ref{rem:cases}, we may assume that $\Omega \subset T^*Z$ is the cone over a small open ball $\Omega^\oo \subset S^\oo Z$ centered at a point of $\Lambda^\oo \subset S^\oo Z$.

Invoking the theory developed in~\cite{Narb, Nexp}, we may deform $\Lambda^\oo \subset S^\oo Z$ 
to a Legendrian subvariety $\Lambda^\oo_{\arbor} \subset S^\oo Z$ with arboreal singularities. Taking $\Lambda_\arbor\subset T^* Z$ to be the cone over $\Lambda^\oo_{\arbor} \subset S^\oo Z$,
we have an equivalence $\mu\Sh^\un_\Lambda(\Omega)   \simeq \mu\Sh^\un_{\Lambda_{\arb}}(\Omega)$, restricting to an equivalence $\mu\Sh_\Lambda(\Omega)   \simeq \mu\Sh_{\Lambda_{\arb}}(\Omega)$.
Passing to compact objects in the first equivalence, we obtain an additional equivalence
$\mu\Sh^w_\Lambda(\Omega)   \simeq \mu\Sh^w_{\Lambda_{\arb}}(\Omega)$.

Thus  we may assume that the conic Lagrangian subvariety $\Lambda \subset T^* Z$ has arboreal singularities.
Applying again the argument that it suffices to prove the assertion locally,
we may further assume that  
$\Omega \subset T^*Z$ is the cone over a small open ball $\Omega^\oo \subset S^\oo Z$ centered at a point of $\Lambda^\oo \subset S^\oo Z$ that is an arboreal singularity. In this situation, the calculations of~\cite{Narb, Nexp}
show that $\mu\Sh^\un_\Lambda(\Omega) $ is equivalent to the dg category $\Mod_k(\cT)$ of modules over a directed tree $\cT$ (a finite nonempty connected acyclic graph with an orientation of its edges), and $\mu\Sh_\Lambda(\Omega) $  is equivalent to the dg category $\Perf_k(\cT)$ of perfect modules. Passing to compact objects  under the first equivalence, 
we  have that  $\mu\Sh^w_\Lambda(\Omega) $ is also equivalent to $\Perf_k(\cT)$.

Finally, for perfect modules over a directed tree, it is straightforward to check directly that the hom-pairing provides an equivalence
$$
\xymatrix{
\Perf_k(\cT) \ar[r]^-\sim & \Fun^{ex}(\Perf_k(\cT)^{op}, \Perf_k) 
}
$$
One can also invoke the fact that the associated path algebra is smooth and proper.
This concludes the proof of the theorem.
\end{proof}


\section{Surfaces}

To provide a collection of simple examples of the preceding theory, in this section we
calculate traditional and wrapped microlocal sheaves on surfaces.

Let $\Sigma$ be an exact symplectic surface, with symplectic form $\omega_\Sigma$ and Liouville form $\alpha_\Sigma$.

By a support Lagrangian $\Gamma \subset \Sigma$, we will mean a locally finite, closed embedded graph, with  vertex set denoted by $V$ and edge set by $E$, admitting a continuous function $f:\Gamma\to \BR$,  such that for each edge $e\in E$, 
the restriction $f|_e$ is differentiable with $d(f|_e) = \alpha_\Sigma|_e$. 
We will allow vertices of any non-zero valence,  edges incident to the same vertex at each end, half-infinite edges incident to a single vertex, as well as infinite edges incident to no vertices. We exclude  circular edges  incident to no vertices for notational convenience; 
the geometry will be unchanged by picking a point  on each such an edge to serve as a vertex.

We will assume the following local tameness: for each vertex $v\in V$, there is a small open ball $B_v\subset \Sigma$ around $v$
and a diffeomorphism 
$$
\xymatrix{
\psi:B_v\ar[r]^-\sim &  \BC
}
$$ 
such that $\psi(\Gamma \cap B_v) \subset \BC$ is the union
of distinct closed rays 
$$
\xymatrix{
\BR_{\geq 0} \cdot e^{i\theta_a} \subset \BC &  \theta_a\in \BR/2\pi\BZ,  a\in |v|
}
$$

\subsection{Combinatorial Fukaya categories}

%
%
%


Given a vertex $v\in V$, denote by $|v|\subset E$ the set of edges incident to $v$,
and by $\# v$ the valence of $|v|$.
Let us view the union $ V\coprod E$ as a poset with relation $v< e$, for $v\in V$, $e\in E$, with $e\in |v|$,
and otherwise distinct elements are incomparable. 

Introduce the poset $\cP$ given by ordered chains in $V\coprod E$.
Its elements are the union of
the vertices $V$, edges $E$, and the subset $U \subset V \times E$ of pairs $(v, e)$ with $e\in |v|$.
Its order relation is given by
$
(v, e) <v, (v, e) <e,
$
for $ v\in V$, $e\in E$, $(e, v) \in U$, and otherwise distinct elements are incomparable.

To each vertex $v\in V$, fix a small open ball $B_v \subset \Sigma$ around $v$,
and to each
edge vertex $e\in E$, fix a small open ball $B_e \subset \Sigma$ around $e$. Arrange so that each intersection
$B_v \cap B_{v'}$, $B_e \cap B_{e'}$ is empty when $v \not = v'$, $e\not = e'$, and the intersection
$B_{(v, e)} = B_v\cap B_e$ is itself a ball if $e\in |v|$, and otherwise empty.

We will only care about $\Sigma$ in a small open neighborhood of $\Gamma$,
and so will assume that it is itself the union
$$
\xymatrix{
\Sigma = (\bigcup_{v\in V} B_v) \cup (\bigcup_{e\in E} B_e)
}
$$
Thanks to the intersection properties of the open balls,
we have a homotopy colimit diagram
$$
\xymatrix{
\colim_{ p \in \cP} B_p \ar[r]^-\sim & \Sigma
}
$$
which can be  more concretely written as the homotopy pushout diagram
$$
\xymatrix{
\ar[d] \bigcup_{(v, e) \in U} B_{(v, e)} \ar[r] & \bigcup_{v\in V}  B_{v} \ar[d]  \\
\bigcup_{e \in E} B_{e}  \ar[r] & \Sigma
}
$$

Regard the poset $\cP$ as a category.
Its objects are the union of
the vertices $V$, edges $E$, and the subset $U \subset V \times E$ of pairs $(v, e)$ with $e\in |v|$.
Its non-identity morphisms are given by
$
(v, e) \to v, (v, e) \to e,
$
for $ v\in V$, $e\in E$, $(e, v) \in U$,
and there are no non-identity compositions requiring definition.

Let $\Lambda$ denote the cyclic category.
Introduce the functor 
$$
\xymatrix{
\cO:\cP\ar[r] & \Lambda &
 \cO(p) = \pi_0((\Sigma \setminus \Gamma) \cap B_p)
}
$$
where the finite set $\cO(p)$ is given the natural cyclic order induced by the orientation of $\Sigma$.

By the results of~\cite{DK, Ncyc}, we have a functor
$$
\xymatrix{
\cC_{st}: \Lambda^{op}\ar[r] &  \BZ/2\on{-}\dgst_k
}
$$
of which we will make use of the following properties. 

For $n =1, 2,\ldots$, let $\Lambda_n \in \Lambda^{op}$ denote the 
standard cycle of $n+1$ elements  $\{ 1 \to 2\to  \cdots \to n\to n+1 \to 1\}$.
 Let $A_{n}$ be the directed quiver $1 \to 2 \to \cdots \to n$. For each $a\in \{1, \ldots, n\}$,  let 
 $P_a, k_a, I_a \in A_{n}\on{-\Perf}_k$ denote the respective projective, skyscraper, and injective
 based at $a$.

The choice of a linear ordering underlying the cyclic ordering on $\Lambda_n$
induces  a canonical equivalence of $\BZ/2$-dg categories
$$
\xymatrix{
\cC_{st}(\Lambda_n) \simeq (A_{n}\on{-\Perf}_k)_{\BZ/2}
}
$$

Moreover, for $a\in \{2, \ldots, n+1\}$,  the inclusion of the subcycle $i_a:\Lambda_1 =\{a\to a+1\to a\} \to \Lambda_n$ of two successive elements is taken to the quotient functor
 $$
\xymatrix{
i_{a}^!:(A_{n}\on{-\Perf}_k)_{\BZ/2} \ar[r] &   (A_{2}\on{-\Perf}_k)_{\BZ/2} \simeq ( \Perf_k)_{\BZ/2} 
}
$$ 
sending 
the injective $I_{a} \in A_{n}\on{-\Perf}_k$ 
to $k\in \Perf_k$,
and killing all of the other
injectives  $I_{a'}   \in A_{n}\on{-\Perf}_k$  with $a'\not = a$.

For $a= 1$,
  the inclusion of the subcycle $i_{1}:\Lambda_1 =\{1\to  2\to 1\} \to \Lambda_n$   is taken to the quotient functor  
$$
\xymatrix{
i_{1}^!:(A_{n}\on{-\Perf}_k)_{\BZ/2} \ar[r] &   (A_{2}\on{-\Perf}_k)_{\BZ/2} \simeq ( \Perf_k)_{\BZ/2} 
}
$$ 
sending 
the injective-projective  $I_{\#v-1} \simeq P_1 \in A_{n}\on{-\Perf}_k$
to $k[-1]\in \Perf_k$,
and killing all of the
 other projectives $P_{a'} \in A_{n}\on{-\Perf}_k$  with $a'\not = 1$.

Note that for $a\in \{2, \ldots, n+1\}$, we have the fully faithful left adjoint
$$
\xymatrix{
i_{a!}: ( \Perf_k)_{\BZ/2} 
 \simeq (A_{2}\on{-\Perf}_k)_{\BZ/2} 
\ar[r] &  (A_{n}\on{-\Perf}_k)_{\BZ/2}
&
i_{a!} k = k_a}
$$ 
and for $a=1$, we have the fully faithful left adjoint 
 $$
\xymatrix{
i_{1!}:( \Perf_k)_{\BZ/2} \simeq (A_{2}\on{-\Perf}_k)_{\BZ/2} \ar[r] &   (A_{n}\on{-\Perf}_k)_{\BZ/2}
}
$$
$$
\xymatrix{
i_{1!}(k) = I_{\#v-1}[1] \simeq P_1[1] \simeq [k_1 \risom k_2 \risom \cdots \risom k_n][1]
}
$$

Passing to left adjoints,  we thus obtain an additional functor
$$
\xymatrix{
\cC^w_{st}:   \Lambda\ar[r] &  \BZ/2\on{-}\dgst_k
}
$$

\begin{defn} 1) Define the {\em combinatorial infinitesimal and wrapped Fukaya functors} of $\Sigma$ 
supported along 
$\Gamma$ to be the respective compositions
$$
\xymatrix{
\cF_{\Gamma} = \cC_{st} \circ \cO: \cP^{op} \ar[r] &  \BZ/2\on{-}\dgst_k
&
\cF_{\Gamma}^w = \cC^w_{st} \circ \cO: \cP \ar[r] &  \BZ/2\on{-}\dgst_k
}
$$

2)
Define the {\em combinatorial infinitesimal and wrapped Fukaya categories}  to be the respective $\BZ/2$-dg categories
$$
\xymatrix{
\cF_\Gamma(\Sigma) = \lim_{\cP^{op}} \cO_{st} 
&
\cF^w_\Gamma(\Sigma) = \colim_{\cP} \cO^w_{st} 
}
$$
\end{defn}

\begin{remark}
Recall for $\cF = \cF_{\Gamma}$ or $\cF_{\Gamma}^w$, we have equivalences
of  $\BZ/2$-dg categories
$$
\xymatrix{
\cF(v) \simeq (A_{\# v - 1}\on{-\Perf}_k)_{\BZ/2}
&
\cF(e) \simeq (\Perf_k)_{\BZ/2} & \cF(v, e) \simeq (\Perf_k)_{\BZ/2} 
}
$$ 
 
 Thus we have  respective pullback and pushout diagrams 
$$
\xymatrix{
\oplus_{(v, e) \in U} (\Perf_k)_{\BZ/2} & \ar[l]  \oplus_{v\in V}  (A_{\# v  - 1}\on{-\Perf}_k)_{\BZ/2}     \\
\ar[u] \oplus_{e \in E}( \Perf_k )_{\BZ/2}  & \ar[l]  \cF_\Gamma(\Sigma)  \ar[u]& 
}
$$
$$
\xymatrix{
\ar[d] \oplus_{(v, e) \in U} (\Perf_k)_{\BZ/2}  \ar[r] & \oplus_{v\in V}  (A_{\# v - 1}\on{-\Perf}_k)_{\BZ/2}    \ar[d]  \\
\oplus_{e \in E} (\Perf_k)_{\BZ/2}   \ar[r] & \cF^w_\Gamma(\Sigma)
}
$$

Furthermore, we can choose the initial equivalences so that the vertical arrows of the left columns
of the diagram are the 
evident functors. Thus all of the structure of the diagrams is contained in the top rows of horizontal arrows.
\end{remark}


\subsection{Microlocal sheaves via gluing}

Let $N = \Sigma \times \BR$ be the contactification of $\Sigma$ with contact form $\lambda = dt + \alpha_\Sigma$
where $t$ denotes the coordinate on $\BR$.
 
 Let $\cL = \Gamma_{-f} \subset N$ be the Legendrian lift of the exact Lagrangian skeleton $\Gamma\subset \Sigma$
 given by the graph of the negative of a primitive $f:\Gamma\to \BR$.

Consider the plane $\BR^2$ with coordinates $x, y$, and let $\pi:T^*\BR^2\to \BR^2$ be its cotangent bundle
with canonical coordinates $x, y, \xi, \eta$,
and $\pi^\oo :S^\oo \BR^2\to \BR^2$ its spherical projectivization.
 
For each point $\ell\in \cL$, we may find a small open ball $U \subset N$ around $\ell$, and an oriented contact embedding
$$
\xymatrix{
\varphi:U \ar@{^(->}[r] & S^\oo \BR^2
}
$$

Set $\Lambda^\oo = \varphi(\cL\cap U)\subset S^\oo \BR^2$ to be the transported Legendrian.
 We may choose $\varphi$ so that the front projection of $\Lambda^\oo$ is a finite map
$$
\xymatrix{
\Lambda^\oo \ar[r] & Y = \pi^\oo(\Lambda^\oo) \subset \BR^2
}
$$ 
and $\varphi(\ell) = ((0, 0), [0,1]) \in \Lambda^\oo$.

Introduce the conic open subspace
$$
\xymatrix{
\Omega =  \{ ((x, y), [\xi, \eta]) \, |\, \eta>0\} \subset T^*\BR^2
}$$
and 
the conic Lagrangian subvariety
$\Lambda \subset T^* \BR^2$
given by the cone over 
$\Lambda^\oo \subset S^\oo \BR^2$.

Then we have the respective dg categories
$$
\xymatrix{
\mu\Sh^{\un}_{\Lambda_p}(\BR^2, \Omega)
&
\mu\Sh_{\Lambda_p}(\BR^2, \Omega)
&
\mu\Sh^w_{\Lambda_p}(\BR^2, \Omega)
}
$$
of large, traditional, and wrapped microlocal sheaves.

To calculate them, let us consider two cases: $\ell = (v, -f(v))\in \cL$, with $v\in \Gamma$ a vertex,
and $\ell = (w, -f(w))\in \cL$, with $w\in \Gamma$  on an edge $e\in E$.

\medskip

1) When $\ell = (w, -f(w))\in \cL$, with $w\in \Gamma$  on an edge $e\in E$, we may choose $\varphi$ so that 
the front projection is the axis
$$
\xymatrix{
Y =  \{ y = 0\} \subset \BR^2 
}
$$

Then we have  canonical equivalences
$$
\xymatrix{
 \Mod_k\ar[r]^-\sim & \Sh^\un_{\Lambda}(\BR^2)_!^0 \ar[r]^-\sim & \mu\Sh^\un_{\Lambda}(\Omega)
}
$$
induced by sending $k \in \Mod_k$ to the constant sheaf $k_{U_+} \in \Sh^\un_{\Lambda}(\BR^2)_!^0$
on the closed half-space $U_+ = \{y>0\} $.

\medskip

2) When $\ell = (v, -f(v))\in \cL$, with $v\in \Gamma$ a vertex, recall there is a small open ball $B_v\subset \Sigma$ around $v$
and a diffeomorphism 
$$
\xymatrix{
\psi:B_v\ar[r]^-\sim &  \BC
}
$$ 
such that $\psi(\Gamma \cap B_v) \subset \BC$ is the union
of distinct closed rays 
$$
\xymatrix{
\BR_{\geq 0} \cdot e^{i\theta_a} \subset \BC &  \theta_a\in \BR/2\pi\BZ,  a\in |v|
}
$$
Without changing microlocal sheaves, we may apply an isotopy to $\Gamma$ to assume all of the rays are as close as we like
to the positive real ray $\BR_{\geq 0} \subset \BC$.

With this arranged, we may choose $\varphi$ so that  for $e\in |v|$, there are functions
$$
\xymatrix{
g_e:\BR_{\geq 0} \ar[r] & \BR
}
$$
with $g_e(0) = 0$, and $dg_e(x) \to 0$ as $x\to 0$, so that the front projection takes the form
$$
\xymatrix{
Y = \bigcup_{e\in |v|}  Y_{e} \subset \BR^2
}
$$
where $Y_{e} \subset \BR_{\geq 0} \times \BR$ denotes the graph of $g_e$,
and $Y_e$ intersects $Y_{e'}$ at the origin alone, for $e\not = e'$.

Moreover, the linear ordering defined by 
$$
\xymatrix{
e<e' & \iff & g_{e}(x) > g_{e'}(x),  x\in \BR_{>0}
}
$$
 induces the cyclic ordering on $|v|$.

Then we have  canonical equivalences
$$
\xymatrix{
 A_{\# v- 1}\on{-\Mod}_k\ar[r]^-\sim & \Sh^{\un}_{\Lambda}(\BR^2)_*^0 \ar[r]^-\sim & \mu\Sh^\un_{\Lambda}(\Omega)
}
$$
induced by the following.
For $e_a \in |v|$, with $a<\# v$, 
we send the skyscraper module $k_{a} \in A_{\# v- 1}\on{-\Mod}_k$ to the  extension  $i_{a!} k_{U_a} \in \Sh_{\Lambda}^\un(\BR^2)_*^0$
along the  inclusion
$$
\xymatrix{
i_a: U_a = \{ x> 0, \,  g_a(x) > y \geq g_{a+1}(x)\} \ar[r] &
  \BR^2  }
  $$
This also sends the projective $P_a \in A_{\# v- 1}\on{-\Mod}_k$,
for $e_a \in |v|$, with $a<\# v$,  to the extension  $ j_{a!} k_{V_a} \in \Sh_{\Lambda}^\un(\BR^2)_*^0$
along the  inclusion
$$
\xymatrix{
j_a: V_a = \{ x> 0, \,  g_a(x) >  y \geq g_{\# v}(x)\} \ar[r] &
 \BR^2
  }
  $$
It also sends the injective $I_a \in A_{\# v- 1}\on{-\Mod}_k$,
for $e_a \in |v|$, with $a<\# v$,  to the extension  $ h_{a!} k_{W_a} \in \Sh_{\Lambda}^\un(\BR^2)_*^0$
along the  inclusion
$$
\xymatrix{
h_a: W_a = \{ x> 0, \,  g_1(x) >  y \geq g_{a}(x)\} \ar[r] &
 \BR^2
  }
  $$

Going further, consider the open conic subspace 
$$
\xymatrix{
\Omega_+ = \Omega \times_{\BR^2} \{x> 0\}  \subset T^*\BR^2
}
$$
and for $e\in|v|$, the closed conic Lagrangian subspace  
$$
\xymatrix{
\Lambda_e = \Lambda \times_{\BR^2} Y_e \subset T^*\BR^2
}
$$

Consider the natural microlocal restriction
$$
\xymatrix{
A_{\# v- 1}\on{-\Mod}_k \simeq \mu\Sh^\un_\Lambda(\Omega) \ar[r] & \mu\Sh^\un_{\Lambda_e}(\Omega_+) \simeq \Mod_k
}
$$

For $e>1 $, it is the quotient functor  sending 
the injective $I_{e} \in A_{\# v- 1}\on{-\Mod}_k$
to $k\in \Mod_k$,
and killing all of the
other injectives $I_{a} \in A_{\# v- 1}\on{-\Mod}_k$  with $a\not = e$,

For $e= 1$, it is the quotient functor  sending 
the injective-projective $I_{\#v-1} \simeq P_1 \in A_{\# v- 1}\on{-\Mod}_k$
to $k[-1]\in \Mod_k$,
and killing all of the
 other projectives $P_{a} \in A_{\# v- 1}\on{-\Mod}_k$  with $a\not = 1$.

\medskip

Now recall  the poset $\cP$ given by ordered chains in $V\coprod E$.

Fix $p\in \cP$,  and apply the above constructions with $\ell = (v, -f(v))$, if $p = v$, 
or a choice of $\ell = (w, -f(w))$, with $w\in e$, if $p = e$, or $p = (v, e)$.

 Let $\Lambda_p^\oo \subset S^\oo \BR^2$ denote
the resulting Legendrian, 
and $\Lambda_p\subset T^*\BR^2$
the conic Lagrangian subvariety obtained by taking
the cone over $\Lambda_p^\oo \subset S^\oo \BR^2$.

\begin{lemma}\label{lem:inv}
The  respective underlying $\BZ/2$-dg categories 
$$
\xymatrix{
\mu\Sh^{\un}_{\Lambda_p}(\BR^2, \Omega)_{\BZ/2}
&
\mu\Sh_{\Lambda_p}(\BR^2, \Omega)_{\BZ/2}
&
\mu\Sh^w_{\Lambda_p}(\BR^2, \Omega)_{\BZ/2}
}
$$
of large, traditional, and wrapped microlocal sheaves
are canonically independent
of any of the preceding choices.
\end{lemma}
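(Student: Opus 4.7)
The plan is to reduce invariance to the classical fact (Kashiwara--Schapira) that microlocal sheaves on conic open subspaces of a cotangent bundle are invariant under contact transformations, and then to check that the moduli of choices is sufficiently connected at the $\BZ/2$-dg level. Given two choices $(U_1,\varphi_1)$ and $(U_2,\varphi_2)$ as in the setup, after shrinking the balls we may assume the images $\varphi_i(U_i)\subset S^\infty \BR^2$ coincide, as do the transported Legendrians. The composition $\varphi_2\circ\varphi_1^{-1}$ is then an oriented contactomorphism preserving the Legendrian; passing to the symplectification, it extends to an exact conic symplectomorphism carrying the data $(\Omega,\Lambda_p)$ constructed from $(U_1,\varphi_1)$ onto the same data constructed from $(U_2,\varphi_2)$. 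Standard invariance then yields an equivalence on all three flavors of microlocal sheaves, canonical up to the Maslov--metaplectic shift by $[2]$.

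Next I would analyze the moduli of choices. Once the basepoint $\ell$, its image point, and the Legendrian tangent direction are pinned, the residual freedom lies in the group of germs of oriented contactomorphisms of $S^\infty \BR^2$ at a point preserving a Legendrian line. In the surface setting this group is connected up to a $\BZ$ of rotations of the co-oriented contact normal, and this $\BZ$ acts on microlocal sheaves precisely by the shift $[2]$. Thus at the $\BZ$-graded dg level there is a genuine $\BZ$-ambiguity in the identification, but after folding to $\BZ/2$-dg categories, where $[2]$ is equivalent to the identity, all the induced equivalences descend to a single one.

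The main obstacle is making ``canonical'' precise, i.e.\ upgrading these pointwise equivalences to a homotopy-coherent system across the moduli of choices. One route is to invoke that along any path of contact embeddings the corresponding equivalences of microlocal sheaves vary homotopy-coherently, so that the monodromy of the resulting family of equivalences is trivial after $\BZ/2$-folding. A more concrete route bypasses coherence via the explicit presentations listed in Cases~1 and~2 just above: in Case~1 the category is tautologically $(\Mod_k)_{\BZ/2}$, while in Case~2 it is $(A_{\#v-1}\text{-}\Mod_k)_{\BZ/2}$, with the quiver structure determined by the cyclic order of the branches at $v$ together with a choice of starting edge. Since this combinatorial data depends only on $\Gamma$ near $\ell$ and not on $\varphi$, any two choices produce the same presentation and hence a canonical equivalence of $\BZ/2$-dg categories; the same presentation visibly respects the full-subcategory and compact-object conditions cutting out the traditional and wrapped flavors from the large one.
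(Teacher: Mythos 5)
Your strategy---identify a $\BZ$-ambiguity controlled by the shift $[2]$ and fold it away in $\BZ/2$-dg categories---runs parallel to the paper's proof, but two specific claims need repair.

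First, at a vertex $v$ the generator of the relevant $\BZ$-ambiguity does not ``act by $[2]$'' as you assert. In the paper's analysis, the choices (specifically the isotopy applied to $\Gamma$ before constructing $\varphi$) differ by a cyclic permutation of the labels $|v|$, and a one-step cyclic permutation induces a genuine mutation of $A_{\#v-1}$-modules: $i_{a!}k_{U_a}\mapsto i_{a+1!}k_{U_{a+1}}$ for $a < \#v-1$, and the last sheaf wraps to the shifted projective $j_{1!}k_{V_1}[1]$. Only the $\#v$-fold composite equals $[2]$. Relatedly, your framing in terms of germs of contactomorphisms ``preserving a Legendrian line'' understates the constraint at a vertex: the Legendrian germ there has $\#v$ distinct branches, and the ambiguity in the construction arises not from contactomorphisms fixing that germ but from the different isotopies of $\Gamma$ allowed in the setup.

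Second, the combinatorial fallback in your final paragraph is circular. The presentation of $\mu\Sh_{\Lambda_p}$ as $A_{\#v-1}$-modules genuinely depends on the starting edge; different starting edges yield different presentations, related by precisely the mutations above, so it is not true that ``any two choices produce the same presentation.'' What makes the answer canonical, and what the paper actually uses, is that this system of mutations matches the one packaged by the cyclic functor $\cC_{st}\colon \Lambda^{\op}\to\BZ/2\on{-}\dgst_k$ of \cite{DK,Ncyc}; verifying that agreement is the substance of the paper's proof, and it is what supplies the coherent family of equivalences that ``canonical'' requires. Your first route via coherence of paths of contact embeddings is plausible but, as you yourself acknowledge, would require comparable work to substantiate.
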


\begin{proof}
By introducing a new vertex along an edge,
the situation when $\ell = (w, -f(w))$ can be viewed as a special case of when $\ell = (v, -f(v))$.
We will focus on the latter situation and leave the former to the reader.

When $\ell = (v, -f(v))$, we must  check what happens if we return to the beginning of the above constructions
and  apply an alternative isotopy to $\Gamma$ to gather all of the edges at $v$  close together.
Then up to homeomorphism, the 
resulting front projection
$$
\xymatrix{
Y = \bigcup_{e\in |v|}  Y_{e} \subset \BR^2
}
$$
will change only possibly by a cyclic permutation of the labels $|v|$.

It suffices to analyze the case where the labels $|v|$ change by a simple cyclic permutation. Suppose with respect to the initial linear ordering $e_1 <e_2 <\cdots< e_{\# v}$,  each $e_a\in |v|$ goes to the next $e_{a+1}\in |v|$, and the maximum $e_{\# v}\in |v|$ goes
to the minimum $e_1\in |v|$.

Then under the corresponding contact transformation, the dg category of microlocal sheaves undergoes the mutation:
 for $a<\# v-1$, each sheaf $i_{a!} k_{U_a}$ goes to the next $i_{a+1!} k_{U_a}$, and for $a = \# v$,
 the sheaf $i_{\# v-1 !} k_{U_{\#v-1}}$ goes to the sheaf $j_{1 !} k_{V_a}[1]$.

Note this mutation is the same as that assigned to the simple cyclic permutation by the functor 
$\cC_{st}:\Lambda^{op} \to \BZ/2\on{-}\dgst_k$.
In particular, if we iterate it $\#v$ times, we obtain the shift $[2]$. Thus passing 
to underlying $\BZ/2$-dg categories, we obtain the sought-after invariance.
\end{proof}

Regard  the poset $\cP$ as a category.

\begin{defn}
1) Define the functor of {\em large microlocal sheaves}  
 on $\Sigma$ 
supported along 
$\Gamma$
 to be given by the assignments
$$
\xymatrix{
\mu\Sh^\un_\Gamma :\cP \ar[r] & \BZ/2\on{-}\dgSt_k 
&
\mu\Sh^\un_\Gamma(p) = \mu\Sh_{\Lambda_p}(\BR^2, \Omega)_{\BZ/2}
}$$
with morphisms taken to the natural microlocal restrictions.

Define the functor of {\em traditional microlocal sheaves}  
 to be given by the assignments
$$
\xymatrix{
\mu\Sh_\Gamma :\cP \ar[r] & \BZ/2\on{-}\dgst_k 
&
\mu\Sh_\Gamma(p) = \mu\Sh_{\Lambda_p}(\BR^2, \Omega)_{\BZ/2}
}$$
with morphisms given by the natural microlocal restrictions.

Define the  functor of  {\em wrapped microlocal sheaves} 
to be given by the assignments
$$
\xymatrix{
\mu\Sh^w_\Gamma :\cP \ar[r] & \BZ/2\on{-}\dgst_k 
&
\mu\Sh^w_\Gamma(p) = \mu\Sh_{\Lambda_p}(\BR^2, \Omega)_{\BZ/2}
}$$
with morphisms given by the left adjoints to the natural microlocal restrictions.

2) 
Define the  2-periodic dg categories of {\em traditional and wrapped microlocal sheaves}  to be the respective limit and colimit
$$
\xymatrix{
\mu\Sh_\Gamma(\Sigma) = \lim_{\cP^{op}} \mu\Sh_\Gamma
&
\mu\Sh^w_\Gamma(\Sigma) = \colim_{\cP} \mu\Sh_\Gamma^w
}
$$
\end{defn}

Comparing the above explicit descriptions, we obtain the following.

\begin{thm}\label{thm:surface}
There are canonical equivalences of functors
$$
\xymatrix{
\cF_\Gamma \ar[r]^-\sim & \mu\Sh_\Gamma 
&
\cF^w_\Gamma \ar[r]^-\sim & \mu\Sh^w_\Gamma
}
$$
and thus canonical equivalences of their respective limits and colimits
$$
\xymatrix{
\cF_\Gamma(\Sigma) \ar[r]^-\sim & \mu\Sh_\Gamma(\Sigma)
&
\cF^w_\Gamma(\Sigma) \ar[r]^-\sim & \mu\Sh^w_\Gamma(\Sigma)
}
$$
\end{thm}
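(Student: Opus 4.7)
The plan is to construct the natural equivalences $\cF_\Gamma \risom \mu\Sh_\Gamma$ and $\cF^w_\Gamma \risom \mu\Sh^w_\Gamma$ pointwise on the poset $\cP$, verify compatibility with the structure maps, and then pass to limits and colimits to deduce the global assertions. The pointwise comparison is essentially the content of the explicit local calculations carried out just above the theorem statement: for a vertex $v \in V$, both $\cF_\Gamma(v) = \cC_{st}(\Lambda_{\# v - 1})$ and $\mu\Sh_\Gamma(v)$ have been identified with $(A_{\# v - 1}\on{-\Perf}_k)_{\BZ/2}$, and for an edge $e$ or a flag $(v,e)$ both sides reduce to $(\Perf_k)_{\BZ/2}$. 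Fixing these canonical identifications gives the required equivalences on objects.

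Next I would check naturality on morphisms, which is where essentially all of the content lies. The only non-trivial arrows in $\cP$ are the flag maps $(v,e)\to v$ and $(v,e)\to e$; the latter are tautological on both sides. For an arrow $(v,e)\to v$, the induced map $\cO(v,e)\to \cO(v)$ is the inclusion $\Lambda_1 \hookrightarrow \Lambda_{\# v -1}$ picking out the pair of successive sectors at $v$ adjacent to $e$, and $\cC_{st}$ sends it to one of the quotient functors $i_a^!$ recalled in the paper. On the microlocal side, the natural microlocal restriction from $\mu\Sh^{\un}_{\Lambda_v}(\BR^2,\Omega)$ to $\mu\Sh^{\un}_{\Lambda_e}(\Omega_+)$ has been computed explicitly in the discussion of case 2) to be exactly this same functor, sending the injective $I_e$ (or, in the exceptional case $e=1$, the injective-projective $I_{\# v - 1}\simeq P_1$) to $k$ (respectively $k[-1]$) and killing the remaining injectives (respectively projectives). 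Matching these identifications produces the desired natural transformation $\cF_\Gamma \to \mu\Sh_\Gamma$, which is an equivalence by the pointwise comparison. The main obstacle is precisely here: one must track the cyclic bookkeeping (the choice of linear ordering lifting each cyclic set) and the $[1]$-shift arising in the $e=1$ case coherently across all flags, and this is what forces the statement to live at the level of $\BZ/2$-dg categories, as already seen in the proof of Lemma~\ref{lem:inv}.

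For the wrapped version, both $\cC^w_{st}$ and $\mu\Sh^w_\Gamma$ are defined by passing to left adjoints of the structure maps of $\cC_{st}$ and $\mu\Sh_\Gamma$ respectively (the latter via the compact-object description of wrapped microlocal sheaves, whose corestriction maps are by definition the left adjoints of the restriction maps on large microlocal sheaves). Consequently the equivalence $\cF_\Gamma \simeq \mu\Sh_\Gamma$ transports automatically, arrow by arrow, to an equivalence $\cF^w_\Gamma \simeq \mu\Sh^w_\Gamma$ of the opposite-variance functors on $\cP$.

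Finally, the global assertion follows formally. The balls $\{B_p\}_{p\in\cP}$ form a cover of a neighborhood of $\Gamma$, and the homotopy colimit diagram $\colim_{p\in\cP}B_p\risom \Sigma$ is precisely the diagram governing descent. Since $\mu\Sh_\Gamma$ is a sheaf and $\mu\Sh^w_\Gamma$ is a cosheaf of $\BZ/2$-dg categories supported along $\Gamma$, one has $\mu\Sh_\Gamma(\Sigma)\simeq \lim_{\cP^{op}}\mu\Sh_\Gamma$ and $\mu\Sh^w_\Gamma(\Sigma)\simeq \colim_\cP \mu\Sh^w_\Gamma$, which match the definitions of $\cF_\Gamma(\Sigma)$ and $\cF^w_\Gamma(\Sigma)$. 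Applying the functor-level equivalences just established then yields the global equivalences claimed in the theorem.
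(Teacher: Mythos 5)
Your proposal is correct and matches the paper's (very terse, one-sentence) proof: the paper simply says ``Comparing the above explicit descriptions, we obtain the following,'' and you have filled in exactly the intended comparison of the case~1) and case~2) local computations with the combinatorial functor $\cC_{st}$, including the identification of the microlocal restriction with the quotient functors $i_a^!$, the transport to the wrapped side by passing to left adjoints, and the formal passage to limits and colimits. Your explicit acknowledgment of the $[1]$-shift in the $e=1$ case and the cyclic bookkeeping issue, and that it is precisely what forces the statement to live over $\BZ/2$-dg categories, is the right observation and is consonant with the paper's Lemma~\ref{lem:inv}.
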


\begin{corollary}\label{cor:topol}
The functors of traditional and wrapped microlocal sheaves
$
\mu\Sh_\Gamma, 
\mu\Sh^w_\Gamma
$
and  their respective $\BZ/2$-dg categories of global sections
$
\mu\Sh_\Gamma(\Sigma) 
$,
$\mu\Sh^w_\Gamma(\Sigma)
$
only depend on the exact symplectic structure on $\Sigma$ through the orientation
it defines. 
\end{corollary}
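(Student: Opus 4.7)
The plan is to reduce the statement to the combinatorial model supplied by Theorem~\ref{thm:surface}, where the dependence on the exact symplectic structure becomes transparent. First I would invoke Theorem~\ref{thm:surface} to obtain canonical equivalences of functors $\cF_\Gamma \risom \mu\Sh_\Gamma$ and $\cF_\Gamma^w \risom \mu\Sh_\Gamma^w$ on the poset $\cP$, together with the induced equivalences $\cF_\Gamma(\Sigma) \risom \mu\Sh_\Gamma(\Sigma)$ and $\cF_\Gamma^w(\Sigma) \risom \mu\Sh_\Gamma^w(\Sigma)$ after passing to limits and colimits. This reduces the corollary to verifying that the combinatorial functors $\cF_\Gamma$ and $\cF_\Gamma^w$ depend on $(\omega_\Sigma, \alpha_\Sigma)$ only through the orientation class determined by $\omega_\Sigma$.

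Next, I would unpack the combinatorial construction $\cF_\Gamma = \cC_{st} \circ \cO$ and $\cF_\Gamma^w = \cC_{st}^w \circ \cO$, and check that each factor is visibly orientation-only. The functors $\cC_{st}, \cC_{st}^w$ are universal constructions out of the cyclic category $\Lambda$ and involve no data from $\Sigma$ whatsoever. The poset $\cP$ is assembled from ordered chains in $V \sqcup E$, and so depends only on the underlying abstract graph structure of $\Gamma$. The only remaining piece is the functor $\cO: \cP \to \Lambda$, defined by $\cO(p) = \pi_0((\Sigma \setminus \Gamma) \cap B_p)$ equipped with the cyclic ordering induced from the orientation of $\Sigma$. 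The underlying finite set depends only on the smooth pair $(\Sigma, \Gamma)$ near $p$, while the cyclic ordering depends, by its very definition, only on the orientation of $\Sigma$. Consequently $\cO$, and hence both $\cF_\Gamma$ and $\cF_\Gamma^w$, factor through the data of the oriented surface together with its embedded graph.

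The main conceptual content has already been absorbed into Theorem~\ref{thm:surface} itself, namely Lemma~\ref{lem:inv}, which ensures that the local contact embeddings $\varphi$ and the auxiliary isotopies used to arrange the front projection do not affect the resulting $\BZ/2$-dg categories. The subtlety there, where a cyclic mutation of the labels $|v|$ iterates to the shift $[2]$, is precisely why the folding to the $\BZ/2$-graded setting is essential and why one cannot expect orientation-only dependence before folding. Once this invariance is in hand, the proof of the corollary is a direct inspection of the combinatorial side, and no further analytic input from the Liouville form $\alpha_\Sigma$ is needed beyond what determines $\Gamma$ itself.
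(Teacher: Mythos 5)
Your proof is correct and matches the paper's implicit argument: the paper states Corollary~\ref{cor:topol} immediately after Theorem~\ref{thm:surface} without separate proof precisely because, as you observe, the combinatorial functors $\cF_\Gamma = \cC_{st}\circ\cO$ and $\cF_\Gamma^w = \cC_{st}^w\circ\cO$ are manifestly built only from the poset $\cP$ of the underlying graph and the orientation-induced cyclic orders $\cO(p)$. Your closing remark correctly locates the real content in Lemma~\ref{lem:inv} and explains why the $\BZ/2$-folding is what makes orientation-only dependence possible.
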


\begin{remark}
Suppose we restrict to compact support Lagrangians, or more generally,
support Lagrangians  with fixed structure near  the circular ends of $\Sigma$.
Then Dyckerhoff-Kapranov~\cite{DK} have explained that the 
the  $\BZ/2$-dg categories 
  $\cF_\Gamma(\Sigma)$, $\cF^w_\Gamma(\Sigma)$ are canonically independent
  of the specific choice of  skeleton $\Gamma\subset \Sigma$. They provide invariants
  of the oriented surface $\Sigma$ equipped with a finite subset of the circular ends of $\Sigma$.
\end{remark}

\begin{corollary}\label{cor:surfduality}
The natural hom-pairing provides an equivalence
$$
\xymatrix{
\mu\Sh_\Gamma(\Sigma)  \ar[r]^-\sim & \Fun^{ex}(\mu\Sh_\Gamma^w(\Sigma)^{op}, \Perf_k) 
}
$$
\end{corollary}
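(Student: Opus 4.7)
The plan is to reduce the assertion to its local combinatorial shadow, and there run a formal duality argument analogous to the proof of Theorem~\ref{thm:duality} but streamlined by the explicit description in Theorem~\ref{thm:surface}. First, I would invoke Theorem~\ref{thm:surface} to replace the two sides with their combinatorial models: the equivalences
$$
\mu\Sh_\Gamma(\Sigma)\simeq \cF_\Gamma(\Sigma) = \lim_{\cP^{op}} \cF_\Gamma,
\qquad
\mu\Sh_\Gamma^w(\Sigma)\simeq \cF^w_\Gamma(\Sigma) = \colim_{\cP} \cF^w_\Gamma,
$$
are compatible with the natural hom-pairing because the morphisms in $\cF_\Gamma$ and $\cF^w_\Gamma$ are defined by passing to left adjoints of one another, which is exactly how the hom-pairing behaves under restriction/corestriction.

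The second step is the abstract rewriting
$$
\Fun^{ex}(\cF^w_\Gamma(\Sigma)^{op},\Perf_k)
\simeq \Fun^{ex}\bigl((\colim_{\cP}\cF^w_\Gamma)^{op},\Perf_k\bigr)
\simeq \lim_{\cP^{op}} \Fun^{ex}(\cF^w_\Gamma(p)^{op},\Perf_k),
$$
using that $\Fun^{ex}(-,\Perf_k)$ sends colimits of small stable dg categories to limits. Thus it suffices to construct, compatibly in $p\in\cP$, an equivalence
$$
\cF_\Gamma(p) \xrightarrow{\ \sim\ } \Fun^{ex}(\cF^w_\Gamma(p)^{op},\Perf_k),
$$
and to check that these local equivalences intertwine the structure maps on both sides.

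The local statement is the content of the third step and is essentially a finite-dimensional fact: for each $p\in\cP$, both $\cF_\Gamma(p)$ and $\cF^w_\Gamma(p)$ are (the folding of) $A_{n}\text{-}\Perf_k$ for $n=\#v-1$ at a vertex, or $\Perf_k$ at an edge or incidence. Since the path algebra of the directed quiver $A_n$ is smooth and proper (in fact of finite global dimension with finite-dimensional total cohomology), the natural hom-pairing
$$
A_n\text{-}\Perf_k \xrightarrow{\ \sim\ } \Fun^{ex}((A_n\text{-}\Perf_k)^{op},\Perf_k)
$$
is an equivalence, and this passes to the $\BZ/2$-folded setting without change. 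This is the same endpoint reached in the proof of Theorem~\ref{thm:duality} after the arboreal reduction; here no arboreal reduction is needed because the local pieces are already of this combinatorial form.

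The main obstacle, and the point requiring the most care, is the compatibility of the local dualities with the transition maps of the diagrams $\cF_\Gamma$ and $\cF^w_\Gamma$. Concretely, for an incidence $(v,e) < v$ and $(v,e) < e$, the restriction $\cF_\Gamma(v)\to \cF_\Gamma(v,e)$ is a quotient functor sending a distinguished injective to $k$ and killing the others, while the corestriction $\cF^w_\Gamma(v,e)\to \cF^w_\Gamma(v)$ is its fully faithful left adjoint. The duality statement one needs is that under the perfect pairing on $A_{\#v-1}\text{-}\Perf_k$, these two functors correspond to one another after taking $\Fun^{ex}(-,\Perf_k)$ and opposite. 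This is a direct calculation, since the relevant projective/injective objects pair to $k$ (up to the parity shift introduced at the $a=1$ index, which is accounted for in passing to the $\BZ/2$-folding). Once this compatibility is verified at each incidence in $\cP$, the limit in the second display matches $\lim_{\cP^{op}} \cF_\Gamma = \cF_\Gamma(\Sigma) \simeq \mu\Sh_\Gamma(\Sigma)$, and the corollary follows.
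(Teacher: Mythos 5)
Your proposal is correct and follows essentially the same route as the paper: reduce to the local statement via the sheaf/cosheaf structure and $\Fun^{ex}(-,\Perf_k)$ turning the colimit into a limit, then settle the local statement by smoothness and properness of the $A_n$-quiver path algebra. The one place where you make unnecessary work for yourself is step four. You propose to build the global equivalence by constructing local equivalences and then verifying by a direct calculation that they intertwine the transition maps; but the paper's argument (referenced via ``as in the proof of Theorem~\ref{thm:duality}'') starts from the \emph{global} natural hom-pairing $\mu\Sh_\Gamma(\Sigma)\to \Fun^{ex}(\mu\Sh_\Gamma^w(\Sigma)^{op},\Perf_k)$, which already exists as a natural transformation of the (co)sheaf diagrams over $\cP$; therefore it is automatically compatible with the structure maps, and one only needs to check that each local component is an equivalence. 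In other words, the fact you are verifying at each incidence $(v,e)<v,e$ — that the hom-pairing carries the corestriction (left adjoint) to the transpose of the restriction — is a consequence of naturality and need not be checked by hand. Adopting that viewpoint collapses your ``main obstacle'' and recovers the two-sentence proof in the paper.
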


\begin{proof}
As in the proof of Theorem~\ref{thm:duality} below, it suffices to prove the assertion locally.
But perfect and coherent modules over the $A_n$-quiver coincide, and form a smooth and proper dg category,
so the  assertion holds locally.
\end{proof}


\subsection{Example: punctured spheres}
We focus here on the specific example $\Sigma_n = S^2 \setminus S_n$, where $S_n$ consists of $n\geq 2$ distinct points,
and $\Gamma_n \subset \Sigma_n$ is a natural compact Lagrangian skeleton. 

\begin{remark}
By Corollary~\ref{cor:topol},
microlocal sheaves on a surface  $\Sigma$ supported along $\Gamma$  depend on the symplectic structure of $\Sigma$
only 
in so far as it provides an orientation and hence the requisite cyclic orderings.
In what follows, we will keep track of orientations but not the specifics of the symplectic structure.
\end{remark}

Let $T^1 = \BR/ 2\pi\BZ$ be the circle, with coordinate $\theta$,
and $T^*T^1 \simeq T^1 \times \BR$ its cotangent bundle, with canonical coordinates $\theta, \xi$.

%
%
Fix $n\geq 2$. 
Introduce  the finite set
$$
\xymatrix{
\fs_{n-2} = \{ (\pi, 2m-1) \in T^1 \times \BR \, |\, m=1, \ldots, n-2\} 
}
$$

Let us work with the surface $\Sigma_n = S^2 \setminus S_n$ in the form of the open complement
$$
\xymatrix{
\Sigma_n = T^* T^1 \setminus \fs_{n-2} 
}
$$
The canonical exact symplectic structure on $T^*T^1$ restricts to an exact symplectic structure 
 on~$\Sigma_n $. This exact structure is in no way specially tuned to $\Sigma_n$, 
 and in particular, does not present it as a Weinstein manifold,
 but all we require is the induced orientation.
 
Let us work with the skeleton 
$$
\xymatrix{
\Gamma_n = \{ (\theta, 2m-2) \in T^1 \times \BR \, |\, m=1, \ldots, n-1\}
\cup  \{ (0, \xi) \in T^1 \times \BR \, |\, 0\leq \xi \leq  2n-4\}   \subset \Sigma_n
}
$$

For $n = 2$, set $Q_2 = \G_m$.
For $n \geq 3$, set
$$
\xymatrix{
Q_n = \BA^1 \cup_{pt} \BP^1 \cup_{pt} \BP^1  \cdots   \cup_{pt} \BP^1 \cup_{pt} \BA^1
}
$$
with $n-3$ copies of $\BP^1$, and where the inclusions of $pt= \Spec k$
 into each copy of $\BP^1$ from the  left and right  have distinct images.

\begin{thm}\label{thm:surfacemirror}
There are mirror equivalences
$$
\xymatrix{
 \mu\Sh_{\Gamma_n}(\Sigma_n) \ar[r] &\Perf_\proper(Q_n)_{\BZ/2}
&
\mu\Sh^w_{\Gamma_n}(\Sigma_n) \ar[r]^-\sim &  \Coh(Q_n)_{\BZ/2}
}
$$
\end{thm}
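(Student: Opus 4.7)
The plan is to apply Theorem~\ref{thm:surface} to translate the claimed equivalences into the combinatorial language of $\cF_{\Gamma_n}$ and $\cF^w_{\Gamma_n}$, and then to match the resulting colimit and limit presentations with the descent decompositions of $\Coh(Q_n)_{\BZ/2}$ and $\Perf_\proper(Q_n)_{\BZ/2}$ provided by Proposition~\ref{prop:moredescent}. For $n\ge 3$, I would decompose $\Gamma_n$ according to its natural building blocks: the $n-1$ horizontal circles $C_m$, each together with small germs of the attached vertical edges at its unique vertex $v_m=(0,2m-2)$, and the $n-2$ vertical edges $V_k$ (each with germs at its two endpoints). The cosheaf property of $\cF^w_{\Gamma_n}$ yields a pushout presentation of $\cF^w_{\Gamma_n}(\Sigma_n)$ in $\BZ/2\on{-}\dgst_k$ with terms $\cF^w(C_m^\bullet)$ and $\cF^w(V_k)$, glued along their pairwise intersections; each $V_k$ is a single edge without interior vertices and each intersection is a short open arc, so these contribute $(\Perf_k)_{\BZ/2}\simeq\Coh(\pt)_{\BZ/2}$.

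The heart of the argument is the local identification of $\cF^w(C_m^\bullet)$. For end circles ($m=1$ or $m=n-1$) the vertex $v_m$ has valence $3$, so $\cF^w(v_m)\simeq (A_2\on{-\Perf}_k)_{\BZ/2}$; a direct computation of the defining pushout from the vertex, the loop edge, and the pendant edge using the subcycle-inclusion maps of $\cC^w_{st}$ yields $\cF^w(C_m^\bullet)\simeq \Coh(\BA^1)_{\BZ/2}$. For interior circles ($1<m<n-1$) the vertex has valence $4$, so $\cF^w(v_m)\simeq (A_3\on{-\Perf}_k)_{\BZ/2}$, and the analogous computation gives $\cF^w(C_m^\bullet)\simeq \Coh(\BP^1)_{\BZ/2}$, extending Example~\ref{ex:cylinder}. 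In both cases the gluing functor from an intersection piece is identified with pullback along the inclusion of the reduced closed point of $\BA^1$ or $\BP^1$ corresponding to the attached vertical germ. Assembling these local equivalences and invoking Proposition~\ref{prop:moredescent} for the presentation of $Q_n$ as an iterated pushout along its nodes, the two descent diagrams match term by term, yielding the wrapped equivalence. The traditional equivalence follows by the same argument with colimit replaced by limit, the functor $\cC^w_{st}$ replaced by $\cC_{st}$, and the $\Perf^*_\proper$-descent portion of Proposition~\ref{prop:moredescent}.

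The case $n=2$ is the degenerate instance: $\Gamma_2$ is a bare circle, which after adding one artificial valence-$2$ vertex yields the standard presentation of $\Loc^w(S^1)_{\BZ/2}\simeq \Coh(\G_m)_{\BZ/2}=\Coh(Q_2)_{\BZ/2}$ as the pushout $(\Perf_k)_{\BZ/2}\sqcup_{(\Perf_k)_{\BZ/2}\oplus (\Perf_k)_{\BZ/2}}(\Perf_k)_{\BZ/2}$ over the two incidences of the loop edge, consistent with the discussion in Remark~\ref{rem:unambiguous} for $n=3$. The main obstacle is the verification of the local equivalences $\cF^w(C_m^\bullet)\simeq\Coh(\BA^1)_{\BZ/2}$ and $\cF^w(C_m^\bullet)\simeq\Coh(\BP^1)_{\BZ/2}$: one must track how the cyclic subcycle-inclusion functors built into $\cC^w_{st}$ become, under the quiver-sheaf correspondence, precisely the restriction-to-closed-point functors, keeping careful track of the cyclic orientations at the vertices and of the cohomological shifts entering through the injective-projective identification at the distinguished element of the cycle.
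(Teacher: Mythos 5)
Your proposal is correct and takes essentially the same approach as the paper: both reduce to combinatorial Fukaya categories via Theorem~\ref{thm:surface}, decompose the problem into circle-plus-germ pieces glued along the vertical edges (the paper phrases this as a cover of $\Sigma_n$ by horizontal slabs $\Sigma(2k-2,2k+2)$, which yields the same iterated-pushout diagram), identify the local wrapped categories with $\Coh(\BA^1)_{\BZ/2}$ and $\Coh(\BP^1)_{\BZ/2}$ as in Lemma~\ref{lemma:cylinder} and Example~\ref{ex:cylinder}, and match against the iterated-pushout descent of Proposition~\ref{prop:moredescent}. One small slip worth fixing: the gluing functors $\cF^w(V_k)\to\cF^w(C_m^\bullet)$ on the wrapped (cosheaf) side are the \emph{pushforwards} along the closed-point inclusions into $\BA^1$ or $\BP^1$ — the left adjoints to the $*$-restrictions appearing in Example~\ref{ex:cylinder} — not pullbacks, which would go in the wrong direction for a pushout diagram.
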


\begin{proof}
By Theorem~\ref{thm:surface},
there are canonical equivalences
$$
\xymatrix{
\cF_{\Gamma_n}(\Sigma_n) \ar[r]^-\sim & \mu\Sh_{\Gamma_n}(\Sigma_n)
&
\cF^w_{\Gamma_n}(\Sigma_n) \ar[r]^-\sim & \mu\Sh^w_{\Gamma_n}(\Sigma_n)
}
$$
Thus it suffices to establish the assertion for combinatorial Fukaya categories. 

On the one hand, viewing $Q_n$ as an iterated pushout, by Proposition~\ref{prop:moredescent}, we can calculate $\Perf_\proper(Q_n)_{\BZ/2}$ as  an
 iterated pullback and $\Coh(Q_n)_{\BZ/2}$ as an 
 iterated pushout.

On the other hand, 
for any $a<b$, define the open subspace
$$
\xymatrix{
\Sigma(a, b)  =  \Sigma_n \cap ( T^1 \times (a, b)) 
  \subset \Sigma_n
}
$$
Consider the  cover of $\Sigma_n$ by the open subspaces
$$
\xymatrix{
\Sigma(-\oo, 2), \Sigma(0, 4), \ldots, \Sigma(2n-8, 2n-4), \Sigma(2n-6, \oo)
}
$$
Each only intersects its neighbors in the list and these intersections take the form
$$
\xymatrix{
\Sigma(0, 2), \Sigma(2, 4), \ldots, \Sigma(2n-8, 2n-6), \Sigma(2n-6, 2n-4)
}
$$

Consider $\Sigma_n$ as an iterated pushout diagram of the above cover.
Applying the functors of combinatorial Fukaya categories $\cF_\Gamma$, $\cF^w_\Gamma$,
 it is a straightforward exercise to check that we obtain equivalent diagrams to
those calculating $\Perf_\proper(Q_n)_{\BZ/2}$, $\Coh(Q_n)_{\BZ/2}$ respectively.
To  outline the steps, for the end terms $\Sigma(-\oo, 2), \Sigma(2n-6, \oo)$, we  obtain $\Perf_{\proper}(\BA^1)_{\BZ/2}, \Coh(\BA^1)_{\BZ/2}$ consonant with Lemma~\ref{lemma:cylinder} below.
For the middle terms $\Sigma(0, 4), \ldots, \Sigma(2n-8, 2n-4)$, we obtain $\Perf_{\proper}(\BP^1)_{\BZ/2}, \Coh(\BP^1)_{\BZ/2}$ consonant with Example~\ref{ex:cylinder} above.
For the intersections of neighbors, note that the skeleton reduces to a line so we obtain $(\Perf_k)_{\BZ/2}$.
Finally, see Example~\ref{ex:cylinder} to confirm the maps of the diagram are  the evident pushforwards.
(Note the diagram contains no cycles and so there are no compatibilities of the maps to check.)
\end{proof}


\section{Pairs of pants}\label{s: pants}


\subsection{Liouville manifolds}

We will view pairs of pants as Liouville manifolds and so recall here some standard background in this direction.
Our source for all of the material is~\cite{ce}.

Let $X$ be a smooth manifold. 
Let $v\in\Vect(X)$ be a complete vector field,
and denote its flow by $v_{t}:X\to X$, for $t\in \BR$.
Let $X^0 \subset X$ be the zero locus of $v$, or equivalently the fixed locus of $v_t$.
By the  stable locus of $v$, we will mean the subspace
$$
\xymatrix{
 X^{st}  =  \{ x\in X \, |\, \lim_{t\to \infty} v_{t}(x) \in X^0\} \subset X
}
$$

Now let $W$ be an exact symplectic manifold with Liouville form $\alpha$
and symplectic form $\omega = d\alpha$.
The Liouville vector field $v\in \Vect(W)$ defined by $i_v\omega = \alpha$ is symplectically
 expanding $L_v \omega = \omega$. Conversely, if a vector field $v\in \Vect(W)$ is symplectically expanding $L_v \omega = \omega$,
 then the one-form $\alpha = i_v \omega$ is a Liouville form $d\alpha = \omega$.

\begin{defn}
By a {\em Liouville manifold} $(W, \alpha, \omega)$, we will mean an exact symplectic manifold $W$
with Liouville form $\alpha$
and symplectic form $\omega = d\alpha$ such that the Liouville vector field $v\in \Vect(W)$ is complete,
and
there exists an exhaustion $W = \bigcup_{i = 1}^\oo W^i$ by compact domains $W^i \subset W$
with smooth boundaries along which $v$ is outward pointing.
\end{defn}

Let $W^0 \subset W$ be the zero locus of the Liouville vector field $v$,
or equivalently the fixed locus of the Liouville flow $v_t:W\to W$, for $t\in \BR$.
We will use the term {\em skeleton}
to refer
to the stable locus for the Liouville flow
$$
\xymatrix{
 W^{st} =  \{ x\in W \, |\, \lim_{t\to \infty} v_{t}(x) \in W^0\} \subset W
}
$$

A Liouville manifold is said to be {\em finite-type} if the skeleton is compact. In this case,
we may fix a single compact domain $W^c\subset W$ with smooth boundary 
along which the Liouville vectro field $v$ is outward pointing and such that $W^{st} \subset W^c$. Then the boundary
$\partial W^c \subset W$ is naturally a contact manifold with contact form $\lambda =  \alpha|_{\partial W^c}$.
Applying the
 Liouville flow $v_t$ to the boundary $\partial W^c $  provides an
exact symplectomorphism
$$
\xymatrix{
W \setminus W^{st} \ar[r]^-\sim & \partial W^c \times \BR
}
$$ 
where $ \partial W^c \times \BR$ is the symplectization of $\partial W^c$  
 with Liouville form $e^t\lambda$.

By a {\em Liouville submanifold}, we will mean a closed submanifold $V \subset W$ such that
the restrictions
$(V, \alpha|_V, \omega|_V)$ form a Liouville manifold.

%
%

\begin{defn}
A smooth family $(W, \alpha(s), \omega(s))$,  $s\in [0,1]$, of Liouville manifolds is called a  
{\em simple Liouville homotopy} if there exists a smooth family of exhaustions 
 $W = \bigcup_{i = 1}^\oo W^i(s)$ by compact domains $W^i(s) \subset W$ 
 with smooth boundaries along which the Liouville vector field $v(s)$ is outward pointing. 
A smooth family $(W, \alpha(s), \omega(s))$, $s\in [0,1]$, of Liouville manifolds is called a  
{\em Liouville homotopy} if it is a composition of finitely many simple
 Liouville homotopies.
\end{defn}

\begin{prop}\cite[Proposition 11.8]{ce}
Let $(W, \alpha(s), \omega(s))$,  $s\in [0,1]$, be a  Liouville homotopy.
Then there exists a diffeotopy $h(s):W\to W$ with $h(0) = \Id_W$ such that
$h(s)^*\alpha(s) -\alpha(0)$ is exact, for all $s\in [0,1]$.
Moreover, if the closure of the union $\ol{\bigcup_{s\in [0,1]} W(s)^{st}}$ is compact, then we can achieve that
$h(s)^*\alpha(s_ -\alpha_0 = 0$, for all $s\in [0,1]$, outside of a compact subspace.
\end{prop}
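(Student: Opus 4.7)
The plan is to reduce, by concatenation, to the case of a single simple Liouville homotopy and then apply a Moser-type deformation argument. For the concatenation step, given diffeotopies realizing exactness over successive subintervals of a partition $0=s_0<s_1<\cdots<s_N=1$, one composes them to produce a global diffeotopy on $[0,1]$; exactness of $h(s)^*\alpha(s)-\alpha(0)$ is preserved since the pullback of an exact form by a diffeomorphism is exact, and smoothness near the joints can be arranged by a standard reparametrization in $s$.

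For a simple Liouville homotopy, I would seek a time-dependent vector field $X(s)\in\Vect(W)$ whose flow $h(s):W\to W$, with $h(0)=\on{Id}_W$, realizes the claim. Differentiating $h(s)^*\alpha(s)$ in $s$ and applying Cartan's formula gives
\[
\tfrac{d}{ds}\bigl(h(s)^*\alpha(s)\bigr) = h(s)^*\bigl(\iota_{X(s)}\omega(s) + d(\iota_{X(s)}\alpha(s)) + \dot\alpha(s)\bigr).
\]
If $X(s)$ satisfies the Moser equation $\iota_{X(s)}\omega(s)=-\dot\alpha(s)$, uniquely solvable by nondegeneracy of $\omega(s)$, then the right-hand side is exact, so integration in $s$ produces a function $F(s)$ with $h(s)^*\alpha(s)-\alpha(0)=dF(s)$, as desired.

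The main obstacle is completeness of $X(s)$, required to integrate it to a diffeotopy of $W$; this is where the structure of a simple Liouville homotopy is essential. Using the exhaustion $W=\bigcup_i W^i(s)$ by compact domains with the Liouville vector field $v(s)$ outward pointing along $\partial W^i(s)$, I would exploit the gauge freedom $\alpha(s)\leadsto \alpha(s)+df(s)$, which leaves $\omega(s)$ and the exactness claim unchanged while shifting $X(s)$ by the Hamiltonian vector field of $f(s)$. One chooses $f(s)$ so that the modified $X(s)$ is tangent to each $\partial W^i(s)$: since $v(s)$ is everywhere transverse to $\partial W^i(s)$, this tangency is a solvable first-order linear condition along the boundary that can be globalized by a partition of unity. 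The resulting $X(s)$ preserves the exhaustion and is therefore complete.

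For the moreover statement, suppose $K=\ol{\bigcup_s W(s)^{st}}$ is compact. Outside a compact neighborhood of $K$, each $(W,\alpha(s),\omega(s))$ is modeled on the positive symplectization of its contact boundary, where $\dot\alpha(s)$ takes the homogeneous form $e^t\dot\lambda(s)$. A further exact modification, subtracting $d(e^t\mu(s))$ for a primitive $\mu(s)$ of $\dot\lambda(s)$ on the contact boundary, arranges $\dot\alpha(s)\equiv 0$ off a compact set; then the Moser field is compactly supported, $h(s)$ is the identity off a compact subspace, and $h(s)^*\alpha(s)-\alpha(0)=0$ there.
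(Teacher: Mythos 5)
The paper does not supply its own proof of this statement; it is quoted verbatim from Cieliebak--Eliashberg \cite[Proposition~11.8]{ce}, so the comparison is really against that reference. Your overall strategy---reduce to a simple Liouville homotopy by concatenation and then run a Moser argument with the equation $\iota_{X(s)}\omega(s)=-\dot\alpha(s)$---is the right starting point and matches the spirit of their proof. However, there are two genuine gaps.

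First, the completeness argument does not go through. You propose to gauge $\alpha(s)\leadsto\alpha(s)+df(s)$ so that the modified Moser field $X(s)+X_{f(s)}$ is tangent to every $\partial W^i(s)$. Unwinding the tangency condition at a boundary hypersurface $\Sigma=\{\nu=0\}$ gives a first-order equation of the form $X_\nu(f)=g$ along $\Sigma$, and along a contact-type hypersurface $X_\nu$ restricted to $\Sigma$ is a nonzero multiple of the Reeb vector field. A first-order equation $R(f)=g$ along the Reeb flow is obstructed: if $\gamma$ is a periodic Reeb orbit one must have $\oint_\gamma g=0$, which has no reason to hold for the $g$ produced by $X(s)$ and the motion of the exhaustion. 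Moreover ``globalizing by a partition of unity'' does not rescue it, since a convex combination $\sum\chi_\alpha f_\alpha$ of local solutions introduces the error $\sum f_\alpha X_\nu(\chi_\alpha)$, so the constraint is not preserved under patching. Controlling escape to infinity for the Moser flow is precisely the hard content of the Cieliebak--Eliashberg proof and is handled there by a more delicate analysis of the moving exhaustion and the Liouville flow at infinity, not by making the Moser field tangent to the exhausting hypersurfaces.

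Second, in the ``moreover'' step you modify the given $\alpha(s)$ by $-d(e^t\mu(s))$ so that $\dot\alpha(s)\equiv 0$ off a compact set, and then conclude $h(s)=\operatorname{Id}$ off a compact, hence $h(s)^*\alpha(s)-\alpha(0)=0$ there. But this proves the stronger conclusion only for the \emph{modified} family $\alpha'(s)=\alpha(s)+dg(s)$ with $g(s)$ not compactly supported. From $h(s)^*\alpha'(s)=\alpha'(0)$ off a compact you recover only $h(s)^*\alpha(s)-\alpha(0)=d(g(0)-h(s)^*g(s))$ there, which is exact but not zero, so the stated conclusion for the original family does not follow. (A smaller slip: the gauge change $\alpha(s)\leadsto\alpha(s)+df(s)$ shifts the Moser field by the Hamiltonian field of $\dot f(s)$, not of $f(s)$; equivalently you must take $\alpha(s)\leadsto\alpha(s)+d\bigl(\int_0^s f\bigr)$ to add $X_{f(s)}$.)
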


\begin{remark}
A Liouville homotopy of finite-type Liouville manifolds need not have
the property that the closure of the union $\ol{\bigcup_{s\in [0,1]} W(s)^{st}}$ is compact (see \cite[Example 11.7]{ce}).
\end{remark}

\begin{defn}
By a {\em Weinstein manifold} $(W, \alpha, \omega, \phi)$,
we will mean an exact symplectic manifold $W$
with Liouville form $\alpha$, symplectic form $\omega = d\alpha$, and exhausting
Morse-Bott function $\phi:W\to \BR$ such that 
 the Liouville vector field $v\in \Vect(W)$ is complete and gradient-like for $\phi$.
\end{defn}

A Weinstein manifold $(W, \alpha, \omega, \phi)$ defines a Liouville manifold  $(W, \alpha, \omega)$
(though not every Liouville manifold is diffeomorphic to a Weinstein manifold~\cite{mcduff}).
For a Weinstein manifold $(W, \alpha, \omega, \phi)$, the critical locus of $\phi$ coincides with the zero locus of $v$, and
is a union $W^0 =\coprod_{i}  W^0_i \subset W$ of  connected critical manifolds.
The skeleton is a union $W^{st} =\coprod_{i}  W^{st}_i \subset W$ of the respective stable manifolds
  $$
 \xymatrix{
 W^{st}_i =  \{ x\in W \, |\, \lim_{t\to \infty} v_{t}(x) \in W^0_i\} \subset W
 }
 $$
  By construction, each stable manifold $ W^{st}_i\subset W$ is invariant with respect to the Liouville vector field $v$,
  and hence  isotropic for the Liouville form $\alpha|_{W^{st}_i} = 0$.

Let $M$ be a complex manifold, and  $J:TM\to TM$ its complex structure.
To a smooth function $\phi :M\to \BR$,  associate the respective one-form
and two-form
 $$
 \xymatrix{
 d^c\phi = d\varphi \circ J
 &
 \omega_\phi = - d d^c \phi = 2i \partial \ol\partial\phi
 }
 $$  
One says that $\phi$ is  {\em $J$-convex} if the assignment 
$$
\xymatrix{
g_\phi(v, w) = \omega_{\phi}(v, Jw)
&
v, w\in TM
}
$$
 is positive definitive $g_\varphi(v, v) >0$, for $v\not = 0 \in TM$,
 and so defines a Riemannian metric. 

\begin{defn}
By a {\em Stein manifold} $(M, J, \phi)$, we will mean a complex manifold $M$, with complex structure $J$, equipped with an exhausting $J$-convex
function $\phi:M\to \BR$. 
\end{defn}

\begin{remark}
Given a  complex manifold $(M, J)$, the space of exhausting $J$-convex
functions is convex, hence contractible, and open in the $C^2$-topology.
\end{remark}

\begin{remark}
Any  closed complex submanifold  of $\BC^N$ is a Stein manifold when equipped with the 
exhausting $J$-convex function $\phi(z) = |z|^2$. Conversely, thanks to classical results of 
Grauert and Bishop-Narasimhan,
any  Stein manifold can be embedded as a closed complex submanifold  of some $\BC^N$.
\end{remark}

A Stein manifold $(M, J, \phi)$ defines an exact symplectic form $\omega_\phi$ with Liouville form $d^c \phi$.
After composing $\phi$ with a suitable convex function so that the Liouville vector field is complete,
we obtain a Liouville manifold $(M, d^c \phi, \omega_\phi)$.
After perturbing $\phi$ so that it is  Morse-Bott, we obtain  
a Weinstein manifold  $(M, d^c \phi, \omega_\phi, \phi)$.
Conversely, up to suitable homotopy, every Stein manifold comes from a Weinstein manifold~\cite{ce}.

\begin{example}\label{ex: torus}

For $n\in \mathbb N$, set  $[n]= \{1, \ldots, n\}$.


Introduce the torus $T^{n} = (\BR/2\pi \BZ)^{n}$ with coordinates $\theta_a$, for $a\in [n]$.
Fix the usual identification $T^* T^{n}  \simeq T^{n}  \times \BR^{n} $ with canonical coordinates $\theta_a, \xi_a$,
for $a\in [n]$,
so that the Liouville form, symplectic form, and Liouville vector field take the respective forms 
$$
\xymatrix{
\displaystyle
\alpha_{{n}} = \sum_{a = 1}^{n} \xi_a d\theta_a
&
\displaystyle
\omega_{{n}} = d\alpha_{{n}} =  \sum_{a = 1}^{n} d\xi_a d\theta_a
&
\displaystyle
v_{{n}} =  \sum_{a = 1}^{n} \xi_a \partial_{\xi_a}
}
$$
The regular  $T^n$-action on $T^n$ induces a Hamiltonian $T^n$-action on  $T^* T^n$ with moment map
the natural projection
$$
\xymatrix{
\mu_n: T^*T^{n} \ar[r] & \BR^{n}
&
\mu_n(\theta_1, \xi_1,\ldots, \theta_n,\xi_n) = (\xi_1, \ldots, \xi_n)
}
$$
Taking its squared-length provides a Weinstein manifold $(T^*T^n, \alpha_{{n}}, \omega_{{n}}, |\mu_n|^2)$ with skeleton the zero-section
$T^n\subset T^* T^n$.

Introduce the complex torus $T^{n}_\BC = (\BC^\times)^{n}$ with coordinates $z_a = x_a + i y_a = r_a e^{i\theta_a}$,
for $a\in [n]$.
Fix the identification $T^{n}_\BC  \simeq T^* T^{n}$ defined by
$z_a = e^{\xi_a + i\theta_a}$ so that $\xi_a = \log |z_a|$, $r_a = e^{\xi_a}$, for $a\in [n]$.
Note that the moment map $\mu_n:T^*T^n \to \BR^n$ transports to 
the projection 
$$
\xymatrix{
\Log_n: T^{n}_\BC \ar[r] & \BR^{n}
&
\Log_n(z_1, \ldots, z_n) = (\log|z_1|, \ldots, \log|z_n|)
}
$$
Taking its squared-length provides a Stein manifold $(T^n_\BC, J, |\Log_n|^2)$
 with underlying Weinstein manifold $(T^*T^n, \alpha_{{n}}, \omega_{{n}}, |\mu_n|^2)$.

\end{example}



\subsection{Tailored pairs of pants}\label{s: tailored}

We continue with the constructions and notation recorded in Example~\ref{ex: torus} above.
Thus we have the natural Stein structure 
$$
\xymatrix{
(T_\BC^{n+1}, J, |\Log_{n+1}|^2)
}
$$
on a complex torus, and
 the natural Weinstein structure  
$$
\xymatrix{
(T^* T^{n+1}, \alpha_{{n+1}}, \omega_{{n+1}}, |\mu_{n+1}|^2)
}
$$
on the cotangent bundle of a compact torus.
Under the  identification
$T^{n+1}_\BC \simeq T^* T^{n+1}$,
given in coordinates by
$z_a = e^{\xi_a + i\theta_a}$, for $a\in [n+1]$,
 the Stein structure  induces the Weinstein structure.

By the $n$-dimensional {\em pair of pants}, we will mean the Liouville manifold 
$(P_n, \alpha_{P_n}, \omega_{P_n})$ given by the generic hyperplane
$$
\xymatrix{
P_{n} = \{1 +  z_1 + \cdots + z_{n+1}   = 0 \} \subset  T^{n+1}_\BC
}
$$
equipped with  the restricted Liouville form $\alpha_{P_n} = \alpha_n|_{P_n}$ and symplectic form
$\omega_{P_n} =  \omega_n|_{P_n}$.
%
Note this is  the Liouville manifold associated to the  Stein manifold
$(P_n, J, \phi_n)$
given by the restricted exhausting $J$-convex function  $\phi_n = |\Log_n|^2|_{P_n}$.

\begin{remark}
Fix  the standard open embedding 
$$
\xymatrix{
T_\BC^{n+1} \subset \BP^{n+1}_\BC = \on{Proj}( k[z_0, z_1, \ldots, z_{n+1}])
}
$$  
as the complement of the coordinate hyperplanes $H_a = \{z_a = 0\} \subset \BP^{n+1}_\BC$,
 for $a\in \{0\} \cup [n]$.
 Thus the $n$-dimensional  pair of pants lies in the hyperplane
 $$
\xymatrix{
\displaystyle
H  = \{ z_0 + \cdots + z_{n+1} = 0\} \subset \BP_\BC^{n+1}
}
$$  
as the complement of its intersections with the  coordinate hyperplanes
  $$
\xymatrix{
\displaystyle
P_n =  H \setminus (\bigcup_{a = 0}^{n+1} H \cap H_a)
}
$$  
 Note the symmetric group $\Sigma_{n+2}$ naturally  acts on
 the subspaces $P_n \subset T_\BC^{n+1} \subset \BP^{n+1}_\BC$
 by permuting the homogeneous coordinates.
\end{remark}

%

Following Mikhalkin~\cite{mik}, it is useful to work with the pair of pants 
in  a slightly modified form where we alter its embedding near infinity.
By the $n$-dimensional {\em tailored pair of pants}, 
we will mean the Liouville manifold $(Q_n, \alpha_{Q_n}, \omega_{Q_n})$
given by the submanifold $Q_n \subset T^{n+1}_\BC$
constructed in~\cite[Proposition 4.6]{mik}
equipped with  the restricted Liouville form $\alpha_{Q_n} = \alpha_n|_{Q_n}$ and symplectic form
$\omega_{Q_n} =  \omega_n|_{Q_n}$.
%
%
%
%
%

Let us recall some of its key properties. 
For a large constant $R >0$,
 consider the closed $(n+1)$-simplex
$$
\xymatrix{
\Delta_{n+1}(R) = \{ (x_1, \ldots, x_{n+1}) \in \BR^{n+1} \, |\, x_a \geq -R, \mbox{ for } a\in [n+1],
\sum_{a = 1}^{n+1} x_a \leq R\}
}
$$
For a large constant $M >0$, consider the open subspace
$$
\xymatrix{
\BC^\times(M) = \{z\in \BC^\times \, |\, \log|z| < -M\}
}
$$
and more generally, the open subspace
$$
\xymatrix{
T^{n+1}_\BC(M) = \{(z_1, \ldots, z_{n+1}) \in T^{n+1}_\BC \, |\, \log|z_{n+1}| < -M\}
}
$$
Note
the evident identification $T^{n+1}_\BC(M)  \simeq T^n_\BC \times \BC^\times(M)$.

\medskip

1) There is an isotopy  of Liouville submanifolds 
$$
\xymatrix{
Q_n(s) \subset T^{n+1}_\BC,
s\in [0,1],
&
Q_n(0) = P_n, Q_n(1) = Q_n
}
$$
Moreover, the isotopy is constant inside of
the compact region $\Log_{n+1}^{-1}(\Delta_{n+1}(R)) \subset T^{n+1}_\BC$,
and preserved by the  permutation action of the symmetric group $\Sigma_{n+2}$.
In particular, $Q_n$ coincides with $ P_n$ inside  of $\Log_{n+1}^{-1}(\Delta_n(R))$,
 and  $Q_n$ is preserved by the action of $\Sigma_{n+2}$.

\medskip

2) There is the inductive compatibility
$$
\xymatrix{
Q_n \cap T^{n+1}_\BC(M)  = Q_{n-1} \times \BC^\times(M) 
}
$$
In particular,
$Q_n \cap T^{n+1}_\BC(M)$ is preserved by the dilation $z_{n+1} \mapsto cz_{n+1}$,
for $c\in (0, 1)$. Note that the permutation action of $\Sigma_{n+2}$ implies similar
compatibilities in other directions.

\medskip

To provide the tailored pair of pants with a particularly simple skeleton,  
it will be useful to break symmetry and apply a natural isotopy to its Liouville structure. 

For $x = (x_1, \ldots, x_{n+1})\in \BR^{n+1}$, consider the
family of Weinstein structures on $T_\BC^{n+1}\simeq T^*T^{n+1}$
given in coordinates
$z_a = e^{\xi_a + i\theta_a}$, for $a\in [n+1]$,
 by the translated Liouville form and  translated symplectic form 
$$
\xymatrix{
\alpha_{{n}}^x =  \sum_{a = 1}^{n+1} (\xi_a - x_a) d\theta_a
&
\omega_{{n}}^x = d\alpha^x_n =   \sum_{a = 1}^{n+1} (\xi_a - x_a) d\theta_a
}
$$
Note these are  the Weinstein structures associated to the  Stein structures
given by the translated exhausting $J$-convex function  
$$
\xymatrix{
|\Log_{n+1}^x (z_1, \ldots, z_{n+1})|^2=  \sum_{a = 1}^{n+1} (\log|z_a - x_a|)^2
}
$$

The restricted Liouville form $\alpha^x_{P_n} = \alpha^x_n|_{P_n}$ and symplectic form
$\omega^x_{P_n} =  \omega^x_n|_{P_n}$ provide a family of Liouville structures on the pair of pants
$P_n \subset T^{n+1}_\BC$ induced 
by the restricted exhausting $J$-convex function  $\phi_n^x = |\Log_n^x|^2|_{P_n}$.
It follows from the arguments of~\cite[Proposition 4.6]{mik} that we may construct 
the tailored pair of pants $Q_n \subset T^{n+1}_\BC$ so that  
  the restricted Liouville form $\alpha^x_{Q_n} = \alpha^x_n|_{Q_n}$ and symplectic form
$\omega^x_{Q_n} =  \omega^x_n|_{Q_n}$ provide a family of Liouville structures as well.

Now  choose a large $\ell >0$,  and fix the point $x_\ell = (-\ell, \ldots, -\ell) \in \BR^{n+1}$.
Let us focus our attention
on the Liouville structure on the tailored pair of pants 
$Q_n \subset T^{n+1}_\BC$ given by the specific translated Liouville form
$$
\xymatrix{
\displaystyle
\beta_{Q_n} = \alpha_{Q_n}^{x_\ell} =  (\sum_{a = 1}^{n+1} (\xi_a + \ell) d\theta_a)|_{Q_n}
}
$$

We will write 
$
 L_n \subset Q_n
$
for the resulting skeleton. Our aim in the rest of this section is to describe its geometry.
To do so, we will first introduce some further simple  constructions.

%
%
%
%
%
%
%

Introduce the diagonal circle $T^1_\Delta \subset T^{n+1}$. 
The translation  $T^1_\Delta$-action on $T^{n+1}$ induces a Hamiltonian $T^1_\Delta$-action on  $T^* T^{n+1}$ with moment map
$$
\xymatrix{
\mu_\Delta:T^*T^{n+1}\ar[r] & 
\BR
&
\mu_\Delta (\theta_1, \xi_1, \ldots, \theta_{n+1}, \xi_{n+1})= \sum_{a=1}^{n+1}  \xi_a
}
$$

Consider the quotient $\BT^n = T^{n+1}/T^1_\Delta$
consisting of  $(n+1)$-tuples $[\theta_1, \ldots, \theta_{n+1}]$ taken up to simultaneous translation.
If we distinguish the last entry, then we obtain an identification $\BT^n \simeq T^n$ via the coordinates
$\theta_a - \theta_{n+1}$, for $a \in [n]$.

Let $\ft_n^* = \{\sum_{a = 1}^{n+1} \xi_a = 0\} \subset \BR^{n+1}$ denote the dual of the Lie algebra of $\BT^n$.
Consider the cotangent bundle $T^* \BT^n \simeq \BT^n \times\ft_n^*$ consisting of pairs of  $(n+1)$-tuples $([\theta_1, \ldots, \theta_{n+1}], (\xi_1, \ldots, \xi_{n+1}))$,
with the first taken up to simultaneous translation, and the second satisfying 
$
\sum_{a = 1}^{n+1} \xi_a = 0$.

For $\chi\in  \BR$, we have a twisted Hamiltonian reduction correspondence
$$
\xymatrix{
T^*T^{n+1} & \ar@{_(->}[l]_-{q_\chi} \mu^{-1}_\Delta(\chi) \ar@{->>}[r]^-{p_\chi}
&
T^*\BT^n
}
$$
where the  level-set $ \mu^{-1}_\Delta(\chi)$ consists of pairs of  $(n+1)$-tuples $((\theta_1, \ldots, \theta_{n+1}), (\xi_1, \ldots, \xi_{n+1}))$,
with the second satisfying $ \sum_{a = 1}^{n+1} \xi_a = \chi$.
The map $q_\chi$ is the evident inclusion,   and the map $p_\chi$ is the translated projection
$$
\xymatrix{
p_\chi(( \theta_1, \ldots, \theta_{n+1}), ( \xi_1, \ldots, \xi_{n+1})) = ([ \theta_1, \ldots, \theta_{n+1}],
(\xi_1- \hat\chi, \ldots, \xi_{n+1}- \hat\chi))
}
$$
where we set $\hat \chi = \chi/(n+1)$.
In particular, when $\chi= 0$, we recover the usual Hamiltonian reduction correspondence
$$
\xymatrix{
T^*T^{n+1} & \ar@{_(->}[l]_-{q_0}  T^*_{T^1_\Delta} T^{n+1} \ar@{->>}[r]^-{p_0}
&
T^*\BT^n
}
$$
where $  T^*_{T^1_\Delta} T^{n+1} \subset T^* T^{n+1}$ is the conormal bundle.

 Introduce the conic Lagrangian subvariety
$$
\xymatrix{
\Lambda_{1} = \{(\theta, 0) \, | \, \theta\in T^1\} \cup \{(0, \xi) \, |\, \xi \in \BR_{\geq 0}\} \subset
T^1 \times \BR \simeq  T^*T^1
}$$
and product conic Lagrangian subvariety
$$
\xymatrix{
\Lambda_{n+1} = (\Lambda_1)^{n+1} \subset (T^* T^1)^{n+1} = T^* T^{n+1}
}$$

Note that $\Lambda_{n+1}  \subset \mu_\Delta^{-1}(\BR_{\geq 0})$,
and that  $\Lambda_{n+1}$ and $\mu_\Delta^{-1}(\chi)$ are transverse, for $\chi>0$.

\begin{defn}
For $\chi>0$, define the Lagrangian subvariety
$$
\xymatrix{
\fL_{n} = p_\chi(q_\chi^{-1}( \Lambda_{n+1}) \subset T^*\BT^n
}
$$
\end{defn}

\begin{remark}
We do not include $\chi$ in the notation for $\fL_n$ since eventually we will specialize to the case  $\chi  = n+1$,
 the character of $T_\Delta^1$ arising by restricting the diagonal character
of $T^{n+1}$. For now, we keep $\chi$ as a variable since fewer appearances of $n+1$ may lead to less confusion. 
\end{remark}

To describe $\fL_{n} \subset T^* \BT^n$, let us return to the moment map
$\mu_{n+1}:T^*T^{n+1} \to \BR^{n+1}$ and  restrict it to $\Lambda_{n+1} \subset T^* T^{n+1}$.
Note first that $\mu_{n+1}(\Lambda_{n+1}) = \BR_{\geq 0}^{n+1}$.
For  $I \subset [n+1]$, consider the relatively open coordinate cone 
$$
\xymatrix{
\sigma_I  = \{\xi_a = 0, \xi_b >0 \, |\, a\in I,\,  b\not \in I\} \subset \BR_{\geq 0}^{n+1}
}
$$ 
For $x\in \sigma_I$, note that  $\mu^{-1}_{n+1}(x) \cap \Lambda_{n+1}$
is the orthogonal coordinate subtorus 
$$
\xymatrix{
T^I  = \{\theta_a = 0 \, |\, a\not \in I\} \subset T^{n+1}
}
$$

Next consider the closed simplex
$$
\xymatrix{
\tilde \Xi_n(\chi) = \{ (\xi_1, \ldots, \xi_{n+1})\, |\, \xi_a \geq 0, \mbox{ for } a\in [n+1], \sum_{a = 1}^{n+1} \xi_a = \chi \} 
\subset \BR_{\geq 0 }^{n+1}}
$$
Note that the projection $p_\chi$ restricts to an isomorphism 
$$
\xymatrix{
\mu_\Delta^{-1}(\chi)  \cap \Lambda_{n+1} = 
\mu_{n+1}^{-1}(\tilde \Xi_n(\chi)) \ar[r]^-\sim & \fL_{n} 
}
$$
since for any point of $\mu_\Delta^{-1}(\chi)  \cap \Lambda_{n+1}$, we must have
 $\xi_a > 0$ and hence $\theta_a = 0$, for some $a\in [n+1]$, so that no points are identified by
 $T^1_\Delta$-translations.

For proper  $I \subset [n+1]$, consider the relatively open subsimplex
$$
\xymatrix{
\tilde \Xi_I(\chi)  = \tilde \Xi_n(\chi) \cap \sigma_I 
}
$$ 
Then we see that $p_\chi$ restricts to an isomorphism
$$
\xymatrix{
\bigcup_{I} T^I \times \tilde \Xi_I(\chi)\ar[r]^-\sim & \fL_n
}
$$ 
where we take the union over nonempty  $I \subset [n+1]$.

To make the above description more intrinsic,  
consider 
%
for $\chi>0$, 
and proper  $I \subset [n+1]$,  the relatively open simplex
$$
\xymatrix{
\Xi_I(\chi)  
= \{ (\xi_1, \ldots, \xi_{n+1}) \, |\, \xi_a >  - \hat \chi, \mbox{ for } a\in I,
 \xi_a =  - \hat \chi, \mbox{ for } a\not \in I, \sum_{a = 1}^{n+1} \xi_a = 0 \} 
}
$$ 
and  the subtorus $\BT^I \subset \BT^n$ given by the isomorphic
  image
of $T^I\subset T^{n+1}$. 

Then the above description descends to an  identification of subspaces
$$
\xymatrix{
 \fL_n = \bigcup_{I} \BT^I \times \Xi_I(\chi) \subset \BT^n \times \ft_n^*
 \simeq  T^* \BT^n 
}
$$ 
where we take the union over proper  $I \subset [n+1]$. 

Now we are ready to  describe the geometry of the skeleton
$
 L_n \subset Q_n.
$

\begin{thm}\label{thm: symplecto}
There is an open neighborhood $U_n \subset Q_n$ of the skeleton $L_n \subset Q_n$
and an open symplectic embedding
$$
\xymatrix{
\mathfrak j:U_n \ar@{^(->}[r] & T^* \BT^n
}
$$
restricting to an isomorphism
$$
\xymatrix{
\mathfrak j |_{L_n}:L_n \ar[r]^-\sim & \fL_{n}
}
$$
\end{thm}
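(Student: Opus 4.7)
The plan is to construct $\mathfrak{j}$ by combining the inductive structure of the tailored pair of pants $Q_n$ from \cite{mik} with the Hamiltonian reduction correspondence $T^{*}T^{n+1} \hookleftarrow \mu_{\Delta}^{-1}(\chi) \twoheadrightarrow T^{*}\BT^{n}$, followed by a relative Moser argument to upgrade a preliminary smooth identification to a symplectic embedding.

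First, I would describe the skeleton $L_n \subset Q_n$ explicitly as a union of strata indexed by proper subsets $I \subsetneq [n+1]$, mirroring the decomposition $\fL_n = \bigcup_{I} \BT^{I} \times \Xi_{I}(\chi)$. Using the inductive compatibility $Q_n \cap T^{n+1}_{\BC}(M) = Q_{n-1} \times \BC^{\times}(M)$ and induction on $n$, each stratum of $L_n$ can be matched with the corresponding piece of $\fL_n$. The base case $n=1$ (a thrice-punctured sphere) can be verified by direct computation in polar coordinates on $T^{2}_{\BC}$, and the inductive step propagates the structure from the product ends $Q_{n-1} \times \BC^{\times}(M)$, where the Liouville vector field of $\beta_{Q_n}$ decomposes as the sum of the one on $Q_{n-1}$ and the radial expansion $(\xi_{n+1} + \ell)\partial_{\xi_{n+1}}$ on the $\BC^{\times}(M)$ factor.

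Second, I would define a candidate smooth open embedding $\mathfrak{j}_{0} \colon U_n \to T^{*}\BT^{n}$ by first flowing each point $q \in U_n$ along the ambient Liouville vector field $v^{x_\ell}_{n+1} = \sum_{a}(\xi_a + \ell)\partial_{\xi_a}$ on $T^{*}T^{n+1}$ for the unique time $s(q)$ such that the flowed point $\Psi_{s(q)}(q)$ lies on the level set $\mu_{\Delta}^{-1}(\chi)$, and then applying the reduction $p_{\chi}$. This is well-defined for $\ell$ sufficiently large, since then $\mu_{\Delta}$ is bounded below on $Q_n$. By construction, $\mathfrak{j}_{0}$ sends $L_n$ onto $\fL_n$ stratum-by-stratum. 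However, $\mathfrak{j}_{0}^{*}\omega_{\BT^{n}} - \omega_{Q_n}$ is nonzero in general: a direct computation gives a discrepancy of the form $\tfrac{1}{n+1}\,d\mu_{\Delta} \wedge d\Theta$ with $\Theta = \sum_{a} \theta_{a}$, which vanishes along $L_n$ but not globally on $U_n$.

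Third, since $\omega_{Q_n}$ and $\mathfrak{j}_{0}^{*}\omega_{\BT^{n}}$ agree along the Lagrangian $L_n$ and have cohomologically trivial difference in a tubular neighborhood, a relative Moser argument (possibly after shrinking $U_n$) produces a diffeomorphism $\phi \colon U_n \to U_n$ fixing $L_n$ pointwise with $\phi^{*}(\mathfrak{j}_{0}^{*}\omega_{\BT^{n}}) = \omega_{Q_n}$; the desired embedding is then $\mathfrak{j} = \mathfrak{j}_{0} \circ \phi$. The main obstacle will be executing the Moser step near the singular strata of $L_n$, where several sheets meet and both the Liouville vector field on $Q_n$ and the reduction $p_{\chi}$ become degenerate; this requires a stratum-by-stratum analysis using the explicit local product model from \cite{mik} to exhibit a symplectic chart on each stratum, patched together via compatible Moser flows.
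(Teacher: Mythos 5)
Your proposal takes a genuinely different route from the paper, and the central step of your argument—the Moser correction near the singular Lagrangian—is exactly where the real difficulty lies, and you leave it as an acknowledged obstacle rather than resolving it.

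The paper's proof works as follows. It first identifies the skeleton concretely as $L_n = \bigcup_I T^I\cdot\Delta_I(\ell)$, where $\Delta_I(\ell)$ are faces of a closed simplex $\Delta_n(\ell)$ sitting in the negative real locus $R_n^- = Q_n\cap\BR^{n+1}_{<0}$; this uses Mikhalkin's results that $R_n$ is precisely the critical locus of $\Log_{n+1}|_{Q_n}$ and that $\Log_{n+1}|_{R_n}$ is an immersion. It then chooses a symplectomorphism $\mathfrak j^\circ$ between a small neighborhood of $\Delta_n(\ell)$ in $Q_n$ and a small neighborhood of $\Xi_n(\chi)$ in $T^*\BT^n$, matching the Lagrangian real loci and, crucially, compatible with the local coisotropic foliations near each boundary face (given by $\log|z_a|$ on the source and $\xi_a$ on the target). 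Because these foliations are moment maps for the subtorus actions, $\mathfrak j^\circ$ extends \emph{canonically and without any correction} to a torus-equivariant symplectomorphism $\mathfrak j$ between the tubular neighborhoods swept out by the torus orbits. No Moser argument is ever invoked.

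Your approach—build a candidate map $\mathfrak j_0$ by flowing along the ambient Liouville field to the level set $\mu_\Delta^{-1}(\chi)$, reduce via $p_\chi$, and then run relative Moser to correct the discrepancy—has several unresolved issues. First, you assert that $\mathfrak j_0$ sends $L_n$ onto $\fL_n$ and is an open embedding, but neither is immediate: the ambient Liouville vector field $v_{n+1}^{x_\ell}$ is not tangent to the symplectic submanifold $Q_n$, and flowing to $\mu_\Delta^{-1}(\chi)$ and projecting by $p_\chi$ does not obviously produce a local diffeomorphism transverse to the $T^1_\Delta$-orbits, nor does a direct coordinate check of the $\xi$-values on a stratum $T^I\cdot\Delta_I(\ell)$ (where $\xi_a = -\ell$ for $a\in I$ is fixed by the flow) land you on the corresponding stratum of $\fL_n$ (where $\xi_a = -\hat\chi$). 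Second, your claimed discrepancy $\tfrac{1}{n+1}\,d\mu_\Delta\wedge d\Theta$ vanishing along $L_n$ is not apparent, since $\mu_\Delta$ and $\Theta$ both vary non-trivially along the skeleton strata. Third, and most importantly, even granting the preceding, running Moser near a singular Lagrangian with corners of all codimensions is not a routine application: one needs a primitive of the discrepancy vanishing on $L_n$ with sufficient control, one needs the resulting time-dependent vector field to preserve the neighborhood and the stratification, and one needs to patch the Moser flows consistently across adjacent strata. You name this as the main obstacle and propose to handle it ``stratum-by-stratum\ldots patched together via compatible Moser flows,'' but this is precisely the content that needs proof. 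The paper's construction is specifically engineered to avoid this difficulty, and I would regard your step three as a genuine gap rather than a deferred detail.
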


\begin{proof}
Consider the real points 
$$
\xymatrix{
R_n = Q_n\cap T^{n+1}_\BR =  \{(x_1, \ldots, x_{n+1}) \in (\BR^{\times})^{n+1} \, |\, \sum_{a=1}^{n+1} x_a = -1\} 
} 
$$
and in particular its component where all coordinates are negative 
$$
\xymatrix{
R_n^- = \{(x_1, \ldots, x_{n+1}) \in \BR_{<0}^{n+1} \, |\, \sum_{a=1}^{n+1} x_a = -1\} 
\subset R_n
} 
$$

Consider the closed simplex
$$
\xymatrix{
\Delta_n(\ell) = \{ (x_1, \ldots, x_{n+1}) \in R_n^- \, |\, \log |x_a| \geq -L, \mbox{ for } a\in [n+1]\}
}
$$
and for proper $I\subset [n+1]$, the relatively open subsimplices
$$
\xymatrix{
\Delta_I(\ell) = \{ (x_1, \ldots, x_{n+1}) \in R_n^- \, |\, \log |x_a| > -\ell, \mbox{ for } a\not \in I,
 \log |x_b| = -\ell, \mbox{ for } b \in I   \}
}
$$
So for example, for $I = \emptyset$, we have  $\Delta_\emptyset(\ell)$ is the interior of $\Delta_n(\ell)$,
and for $I = [n+1] \setminus \{i\}$, we have  $\Delta_{[n+1] \setminus \{i\}}(\ell)$ is a vertex of $\Delta_n(\ell)$.

Introduce as well the barycenters of the subsimplices
$$
\xymatrix{
\delta_I(\ell) = \{ \log |x_a| = \log|x_{a'}|,  \mbox{ for } a, a'\not \in I,
 \log |x_b| = -\ell, \mbox{ for } b \in I  \} \subset \Delta_I(\ell)
}
$$

For proper $I\subset [n+1]$, recall we  write $T^I \subset T^{n+1}$ for the subtorus with $\theta_a = 0$, for $a\not\in I$.
So for example, for $I = \emptyset$, we have $T^\emptyset \subset T^{n+1}$ is the identity.

By the inductive compatibility of tailored pairs-of pants, the  vanishing locus 
of  the Liouville form
$\beta_{Q_n}$
includes for proper, nonempty $I \subset [n+1]$,
the nondegenerate manifold given by the  torus orbit of the corresponding barycenter
$$
\xymatrix{
C_I(\ell) = T^I \cdot \delta_I(\ell) 
}
$$
with respective stable manifold
 the  torus orbit of the relatively open subsimplex
$$
\xymatrix{
S_I(\ell) = T^I \cdot \Delta_I(\ell)
}
$$

Recall
by~\cite[Corollaries 4.4 and 4.5]{mik} and the constructions of ~\cite[Proposition 4.6]{mik},
we may arrange so that $R_n$ is precisely  the critical points of $\Log_{n+1}|_{Q_n}$ 
and furthermore $\Log_{n+1}|_{R_n}$ is an immersion.
It follows that  the only additional vanishing of  the Liouville form
$\beta_{Q_n}$
is the nondegenerate isolated zero at the central barycenter 
$$
\xymatrix{
C_\emptyset(\ell) = \{\delta_\emptyset(\ell)\} =   \{T^\emptyset \cdot \delta_\emptyset(\ell)\}  
}
$$
Moreover, note that the one-form $d\theta_a$, 
 for $a\in [n+1]$,  vanishes on tangent vectors  to
 $R_n$, and thus the Liouville form 
 $
\beta_{Q_n}
$
does as well.
Therefore  $R_n$ is a Lagrangian subvariety
of $Q_n$ and invariant with respect to the Liouville flow.
In particular, the  stable manifold of  
the nondegenerate isolated zero
 must be the open subsimplex
$$
\xymatrix{
S_\emptyset(\ell) =  \Delta_\emptyset(\ell) = T^\emptyset \cdot \Delta_\emptyset(\ell)
}
$$

Altogether, we conclude the skeleton is the union of stable manifolds
$$
\xymatrix{
 L_n = \bigcup_{I} S_I(\ell) = \bigcup_{I} T^I \cdot \Delta_I(\ell) 
}
$$ 
where  the union is over proper  $I \subset [n+1]$.

Now let us find the neighborhood $U_n \subset Q_n$
and symplectic embedding $\mathfrak j:U_n \hookrightarrow T^* \BT^n$
 in the statement of the theorem. 

On the one hand, consider a small open neighborhood $U^\circ_n\subset Q_n$
of the closed simplex $\Delta_n(\ell)\subset R_n^-$. 
By the inductive compatibility of tailored pairs-of pants, 
near each relatively open boundary subsimplex $\Delta_I(\ell) \subset\partial \Delta_n(\ell)$, for nonempty,
proper $I \subset [n+1]$, there is the
local  coisotropic foliation
of $U_n^\circ$ with leaves defined by the collection of functions
$\log |z_a|$, for $ a\in I$.   
 Moreover, the  local  coisotropic foliations are compatible in the evident sense that for nonempty,
 proper $I\subset J 
 \subset [n+1]$, near the 
 relatively open subsimplex $\Delta_J(\ell) \subset\partial \Delta_n(\ell)$, the leaves defined 
 by the collection of functions
$\log |z_a|$, for $ a\in J$, refine those 
defined by 
$\log |z_a|$, for $ a\in I$.
Thus we specify no extra structure away from the boundary $\partial \Delta_n(\ell) \subset R_n^-$,
and for example, for $I = [n+1] \setminus \{i\}$, we have a local Lagrangian foliation on $U_n^\circ$ near the vertex
 $\Delta_{[n+1] \setminus \{i\}}(\ell) \subset \Delta_n(\ell)$.

On the other hand, consider a small open neighborhood $\fU^\circ_n\subset T^*\BT^n$
of the closed simplex
$
\Xi_n(\chi) \subset \ft^*.
$ 
Near each relatively open boundary subsimplex $\Xi_I(\chi) \subset\partial \Xi_n(\chi)$, for nonempty,
proper $I \subset [n+1]$, there is the
local  coisotropic foliation
of $\fU_n^\circ$ with leaves defined by the collection of functions
$\xi_a$, for $ a\in I$.   
 Moreover, the  local  coisotropic foliations are compatible in the evident sense that for nonempty,
 proper $I\subset J 
 \subset [n+1]$, near the 
 relatively open subsimplex $\Xi_J(\chi) \subset\partial \Xi_n(\chi)$, the leaves defined 
 by the collection of functions
$\xi_a$, for $ a\in J$, refine those 
defined by 
$\xi_a$, for $ a\in I$.
Thus we specify no extra structure away from the boundary $\partial \Xi_n(\chi) \subset  \ft^*$,
and for example, for $I = [n+1] \setminus \{i\}$, we have a local Lagrangian foliation on $\fU_n^\circ$ near the vertex
 $\Xi_{[n+1] \setminus \{i\}}(\chi) \subset \Xi_n(\chi)$.

Now choose a symplectomorphism 
$$
\xymatrix{
\mathfrak j^\circ:U_n^\circ \ar[r]^-\sim & \fU_n^\circ
}
$$
restricting to a diffeomorphism of Lagrangian submanifolds
$$
\xymatrix{
R_n^- \cap U_n^\circ \ar[r]^-\sim & \ft^* \cap \fU_n^\circ
}
$$ 
 further restricting  to an isomorphism of closed simplices
$$
\xymatrix{
\Delta_n(\ell) \ar[r]^-\sim & \Xi_n(\chi)
}
$$ 
and compatible with the above local coisotropic foliations.

Next, for proper $I \subset [n+1]$, choose  a small open neighborhood
$$
\xymatrix{
U_I^\circ \subset U_n^\circ 
}
$$
around the subsimplex $\Delta_I(\ell) \subset \Delta_n(\ell)$,
and consider the corresponding  small open neighborhood
$$
\xymatrix{
\fU_I^\circ = \mathfrak j^\circ(U_I^\circ) \subset \fU_n^\circ 
}
$$
around the subsimplex $\Xi_I(\ell) \subset \Xi_n(\chi)$.

Introduce the open neighborhoods
$$
\xymatrix{
U_n = \bigcup_I T^I \cdot U_I^\circ \subset Q_n &
\fU_n = \bigcup_I \BT^I \cdot \fU_I^\circ\subset T^*\BT^n
}
$$
of the respective Lagrangian subvarieties
$$
\xymatrix{
L_n = \bigcup_I T^I \cdot \Delta_I(\ell)  \subset Q_n &
\fL_n = \bigcup_I \BT^I \cdot \Xi_I(\chi) \subset T^*\BT^n
}
$$

Since the matched local coisotropic foliations are the moment maps for the respective subtorus actions,
the symplectomorphism $\mathfrak j^\circ$ canonically extends to a symplectomorphism
$$
\xymatrix{
\mathfrak j:U_n \ar[r]^-\sim & \fU_n
}
$$
satisfying the requirements of the theorem.
\end{proof}


\subsection{Contactification and symplectification}
By Theorem~\ref{thm: symplecto}, the symplectic geometry of a neighborhood 
$U_n\subset Q_n$  of the skeleton
$L_n \subset Q_n$ is equivalent to that of a neighborhood $\fU_n\subset T^*\BT^n$ 
of the Lagrangian subvariety $\fL_n \subset T^*\BT^n$.

To keep track of the exact symplectic geometry,
let us  introduce the  Liouville form  $\beta_n$ on the neighborhood $\fU_n\subset T^*\BT^n$  
obtained by transporting the Liouville form $\beta_{Q_n}$ restricted to
the neighborhood 
$U_n\subset Q_n$. Thus $\beta_n$  provides a primitive
to the restriction of the canonical symplectic form $\omega_{T^*\BT^n}|_{\fU_n} =  d\beta_n$,
and also  the Lagrangian subvariety $\fL_n \subset T^*\BT^n$ is conic with respect to its associated
Liouville vector field.

Our aim in this section is to give a microlocal interpretation of the exact symplectic geometry 
of the neighborhood $\fU_n\subset T^*\BT^n$ with Liouville form $\beta_n$
and symplectic form $\omega_{T^*\BT^n}|_{\fU_n}  = d\beta_n$.

To begin, let us recall some general notions for $M$ an exact symplectic manifold with  
Liouville form  $\alpha_M$ and 
symplectic form $\omega_M = d\alpha_M$.

\begin{defn}

1) The {\em circular contactification} of $M$
is the contact manifold $N = M\times T^1$, with contact form $\lambda_N = dt + \alpha_M$,
and contact structure $\xi_N = \ker (\lambda_N)$. 

2)   The {\em contactification} of $M$
is the contact manifold $\tilde N = M\times \BR$, with contact form $\lambda_{\tilde N} = dt + \alpha_M$,
and contact structure $\xi_{\tilde N} = \ker (\lambda_{\tilde N})$. 

Here and in what follows, we often write $t$ for a coordinate on $T^1$ or $\BR$.

\end{defn}

\begin{remark}
Note the natural contact $\BZ$-cover $\tilde N \to N$ induced by the $\BZ$-cover $\BR\to T^1$.
\end{remark}

\begin{remark}\label{rem: compare contact}
Suppose given two Liouville forms $d\alpha_M = \omega_M$, $d\alpha_M'= \omega_M$.

1) 
 If the difference $\alpha_M - \alpha'_M$ is integral, then any primitive $ f:M\to T^1$, $df = \alpha_M - \alpha'_M$ provides a diffeomorphism 
 $$
 \xymatrix{
 F: N  \ar[r]^-\sim &   N
 &
 F(m, t) = (m, t + f(m))
 }
 $$ 
 intertwining the respective contact forms $F^*(dt+\alpha'_M) = dt + \alpha_M$. 

2) Similarly, 
 if the difference $\alpha_M - \alpha'_M$ is exact, then any primitive $\tilde f:M\to \BR$, $d\tilde f = \alpha_M - \alpha'_M$ provides a diffeomorphism 
 $$
 \xymatrix{
 F:\tilde N  \ar[r]^-\sim &  \tilde N
 &
 F(m, t) = (m, t + \tilde f(m))
 }
 $$ 
 intertwining the respective contact forms $F^*(dt+\alpha'_M) = dt + \alpha_M$. 
\end{remark}

\begin{defn}

1) A Lagrangian subvariety $L\subset M$ is {\em integral} if there is a continuous function
$f:L\to S^1$, called the {\em integral structure}, 
such that
the restriction of $f$ to any submanifold contained within $L$ is differentiable and a primitive for the restriction of 
the Liouville form $\alpha_{M}$.

2) A Lagrangian subvariety $L\subset M$ is {\em exact} if in addition
there exists a lift of $f:L\to S^1$ to a continuous function $\tilde f:L\to \BR$, called the  {\em exact structure}.

\end{defn}


%
%
%
%

\begin{remark}\label{rem: leg lift}

1) A Lagrangian subvariety $L \subset M$ is  integral if and only if it admits a Legendrian lift $\cL \subset N$.
We can construct the lift $\cL \subset N$ as the graph
$$
\xymatrix{
\Gamma_{L, -f} =\{(x, -f(x)) \, | \, x\in L\}  \subset M \times T^1 = N
}
$$
of the negative of an integral structure $f:L\to T^1$,
or conversely, construct the integral structure $f:L\to T^1$
by regarding an isotropic lift $\cL \subset N$ as the negative of its graph.

2) Similarly, a Lagrangian subvariety $L \subset M$ is  exact if and only if it admits  a
Legendrian lift $\tilde\cL \subset \tilde N$.
%
\end{remark}

Now let us return to 
 the neighborhood $\fU_n\subset T^*\BT^n$ with symplectic form $\omega_{T^*\BT^n}|_{\fU_n}$,
 and focus on the two Liouville forms
   $\beta_n$
and $\alpha_{T^*\BT^n}|_{\fU_n}$.
On the one hand, the Lagrangian subvariety $\fL_n \subset \fU_n$ is conic with respect 
 to the 
Liouville vector field associated to $\beta_n$, and thus exact with respect to $\beta_n$ as exhibited by the trivial exact structure.
On the other hand, if we  construct $\fL_n \subset \fU_n$
using  $\chi>0$ with $\hat \chi  =\chi/(n+1)$ integral, then the function 
 $$
 \xymatrix{
 \displaystyle
\tilde  f:\mu_\Delta^{-1}(\chi) \cap \Lambda_{n+1} \ar[r] & T^1 &
 \displaystyle
\tilde  f  =\sum_{a=1}^{n+1} (\xi_a - \hat \chi)\theta_a 
 }
 $$ 
is invariant under $T^1_\Delta$-translations, and hence descends to a function $f:\fL_n\to T^1$. Moreover, 
a straightforward computation shows $f$ provides an integral structure  with respect to $\alpha_{T^*\BT^n}$.

Let us continue from here with  
  $\chi>0$ such that $\hat \chi  =\chi/(n+1)$ is integral.

  Consider the contactifications $(N_n, \lambda_n)$, $(N_n, \lambda_n')$
  where we set
  $$
  \xymatrix{
  N_n = \fU_n \times T^1
  &
   \lambda_n = dt + \alpha_{T^*\BT^n}|_{U_n}
   &
   \lambda_n' = dt + \beta_n
   }
   $$
   Following  Remark~\ref{rem: leg lift}, consider the respective Legendrian lifts
$$
\xymatrix{
\cL_n = \Gamma_{\fL_n, -f} & \cL_n' = \fL_n \times \{0\} 
}
$$

  Consider the difference $\gamma_n = \alpha_{T^*\BT^n}|_{\fU_n} - \beta_n$ and note that $d\gamma_n = 0$.
   Observe that $\gamma_n$ is integral since the inclusion $\fL_n \subset \fU_n$ is a homotopy equivalence,  and 
  the restriction $\gamma_n|_{\fL_n}$ is equal to the restriction $\alpha_{T^*\BT^n}|_{\fL_n}$ which is integral.
  Thus there is a unique function $g:\fU_n \to T^1$ such that $dg = \gamma_n$  with the normalization $g|_{\fL_n} = f$.
  Following Remark~\ref{rem: compare contact}, we obtain a contactomorphism
   $$
 \xymatrix{
 G: (N_n, \lambda_n)  \ar[r]^-\sim &   (N_n, \lambda_n')
 &
 G(m, t) = (m, t + g(m))
 }
 $$ 
  Note further that $G(\cL_n) = \cL_n'$ since  we have $g|_{\fL_n} = f$. We conclude that the contact geometry of $(N_n, \lambda_n)$
  near  $\cL_n$ is equivalent to that of  $(N_n, \lambda_n')$  near  $\cL_n'$. 
  Thus to give a microlocal interpretation of $(N_n, \lambda_n')$
  near  $\cL_n'$, it suffices to do the same
 for $(N_n, \lambda_n)$
  near $\cL_n$.

Now let us further specialize to 
  $\chi = n+1$ so that $\hat \chi  =\chi/(n+1) = 1$.

Introduce the conic open subspace 
and its spherical projectivization
$$
\xymatrix{
\Omega_{n+1} = \mu^{-1}_\Delta(\BR_{>0}) 
\subset T^* T^{n+1}
&
\Omega^\oo_{n+1} =\Omega_{n+1}/\BR_{>0}
\subset S^\oo T^{n+1}
}
$$
Note  the natural  projection gives an isomorphism of contact manifolds
$$
\xymatrix{
\mu^{-1}_\Delta(\chi) \ar[r]^-\sim &  \Omega_{n+1}^\oo
}
$$

 Recall the map $p_\chi:\mu^{-1}_\Delta(\chi) \to T^* \BT^n$ given by the translated projection
$$
\xymatrix{
p_\chi(( \theta_1, \ldots, \theta_{n+1}), ( \xi_1, \ldots, \xi_{n+1})) = ([ \theta_1, \ldots, \theta_{n+1}],
(\xi_1- \hat\chi, \ldots, \xi_{n+1}- \hat\chi))
}
$$

  \begin{lemma}\label{lem: contact cover}
For $\chi = n+1$, so that $\hat\chi = 1$,   we have a finite contact cover
  $$
  \xymatrix{
 \fp_\chi: \Omega^\oo_{n+1} \simeq \mu^{-1}_\Delta(\chi) \ar[r] & T^*\BT^n \times T^1    &
\fp_\chi =  p_\chi \times \delta
}
  $$
 where $\delta:T^{n+1}\to T^1$ is the diagonal character. 
 
 The cover is trivializable over the neighborhood $N_n\subset T^*\BT^n \times T^1$ of 
  the Legendrian $\cL_n \subset N_n$ with a canonical section $s: N_n \to \Omega^\oo_{n+1}$
  such that $s(\cL_n) = \Lambda^\oo_{n+1}$.
  \end{lemma}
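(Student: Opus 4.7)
The plan is to verify the lemma's three claims in sequence: that $\fp_\chi$ is a smooth finite covering, that it intertwines the contact forms, and that the section $s$ exists on a neighborhood $N_n$ of $\cL_n$.

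On the $\theta$-factor $\fp_\chi$ reduces to the group homomorphism $T^{n+1} \to \BT^n \times T^1$, $\theta \mapsto ([\theta], \delta(\theta))$. Its kernel is the intersection of the kernels of the two projections: the first is the diagonal torus $T^1_\Delta$, and inside it the condition $\delta(s,\ldots,s) = (n+1)s \equiv 0 \bmod 2\pi$ cuts out exactly the $(n+1)$-th roots of unity embedded diagonally. A dimension count shows surjectivity, and combined with the affine identity on the $\xi$-factor after the shift by $-\mathbf{1}$, the total map $\fp_\chi : \mu^{-1}_\Delta(\chi) \to T^*\BT^n \times T^1$ is a smooth degree-$(n+1)$ covering. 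To verify it intertwines the contact forms, I would compute $\fp_\chi^*(dt + \alpha_{T^*\BT^n})$ directly: the pullback of $dt$ under $\delta$ is $\sum_a d\theta_a$, while the pullback of $\alpha_{T^*\BT^n}$ under $p_\chi$ is $\sum_a (\xi_a - 1)\, d\theta_a$, which descends from $\mu^{-1}_\Delta(\chi)$ precisely because for $\chi = n+1$ one has $\sum(\xi_a - 1) = 0$, making it $T^1_\Delta$-invariant and annihilating the orbit direction $\sum\partial_{\theta_a}$. Adding the contributions yields $\sum_a \xi_a\, d\theta_a = \alpha_{n+1}$, whose restriction to $\mu^{-1}_\Delta(\chi)$ is the standard contact form on $\Omega_{n+1}^\oo$.

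For the section, I would first construct $s_0$ on $\cL_n$ via canonical lifts. Given a point $([\theta], \tilde\xi) \in \fL_n$, any lift $(\theta^0, \tilde\xi + \mathbf{1}) \in \Lambda_{n+1}$ must satisfy $\theta^0_a = 0$ whenever $\tilde\xi_a + 1 > 0$, by definition of $\Lambda_1$; on each proper nonempty stratum this pins down a unique representative $\theta^0 \in T^{n+1}$ of $[\theta]$ and produces a continuous map $s_0 : \cL_n \to \Lambda_{n+1}^\oo \cap \Omega_{n+1}^\oo$. The key identity is that on $\Lambda_{n+1}$ we have
$$
\tilde f(\theta^0, \xi) = \sum_a (\xi_a - 1)\theta^0_a = \sum_a \xi_a\theta^0_a - \delta(\theta^0) = -\delta(\theta^0),
$$
using the coordinatewise orthogonality $\xi_a \theta^0_a = 0$. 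This shows $\fp_\chi(\theta^0, \tilde\xi + \mathbf{1}) = ([\theta], \tilde\xi, -f([\theta], \tilde\xi))$, so $\fp_\chi \circ s_0 = \id_{\cL_n}$ and $s_0(\cL_n) = \Lambda_{n+1}^\oo \cap \Omega_{n+1}^\oo$.

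Finally, the existence of the continuous section $s_0$ implies the pullback covering $\fp_\chi^{-1}(\cL_n) \to \cL_n$ is trivial, splitting as a disjoint union of sheets with $s_0(\cL_n)$ one of them. Choosing $N_n$ as a tubular neighborhood deformation retracting onto $\cL_n$, the monodromy of $\pi_1(N_n)$ into the deck group $\mu_{n+1}$ agrees with that of $\pi_1(\cL_n)$ and hence vanishes; the cover thus trivializes over $N_n$, and extending $s_0$ along the sheet through $s_0(\cL_n)$ produces the required section $s : N_n \to \Omega_{n+1}^\oo$ with $s(\cL_n) = \Lambda_{n+1}^\oo$. The main bookkeeping obstacle in this plan is the integral-structure identity $\tilde f = -\delta$ on $\Lambda_{n+1} \cap \mu^{-1}_\Delta(\chi)$, which underlies both the specialization $\chi = n+1$ (i.e.\ $\hat\chi = 1$) in the contact-form computation and the verification that $s_0$ lands on $\cL_n$ as a graph of $-f$.
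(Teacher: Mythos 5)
Your proof is correct and follows the same approach as the paper: verify the covering and contact-form pullback by direct computation, construct the section over $\cL_n$ from the canonical lift constrained by $\theta_a = 0$ whenever $\xi_a + \hat\chi > 0$, and then extend to a neighborhood by a monodromy argument. Your identity $\tilde f = -\delta$ on the Legendrian, derived from the coordinatewise orthogonality $\xi_a\theta_a = 0$ on $\Lambda_{n+1}$, is exactly the computation the paper uses to check that the lift sits over the graph of $-f$.

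One place where you are actually more careful than the paper deserves mention. The paper's last sentence asserts that the inclusion $\cL_n \subset N_n = \fU_n \times T^1$ is a homotopy equivalence and extends the section on that basis. This is not quite right as stated: $\fU_n$ retracts onto $\fL_n$, so $N_n$ retracts onto $\fL_n \times T^1$, and the graph $\cL_n = \Gamma_{\fL_n, -f}$ is a proper subspace whose inclusion is not surjective on $\pi_1$ (the circle factor survives). In fact the cover genuinely fails to trivialize over $\fU_n \times T^1$: the monodromy of $\fp_\chi$ around a loop in the $T^1$ factor is the generator $e^{2\pi i/(n+1)}\cdot\mathbf{1}$ of the deck group $\mu_{n+1}$. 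Your choice of $N_n$ as a tubular neighborhood that deformation retracts onto $\cL_n$ itself (so that $\pi_1(\cL_n) \to \pi_1(N_n)$ is an isomorphism, and the monodromy on $N_n$ is detected already on $\cL_n$, where your explicit section kills it) is the correct way to close this step, and it is what the paper needs for the argument that follows the lemma. So the proposal is not merely consonant with the paper's proof but quietly repairs it.
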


\begin{proof}
Altogether, the proposed map takes the form
 $$
\xymatrix{
\fp_\chi(( \theta_1, \ldots, \theta_{n+1}), ( \xi_1, \ldots, \xi_{n+1})) = ([ \theta_1, \ldots, \theta_{n+1}],
(\xi_1- \hat\chi, \ldots, \xi_{n+1}- \hat\chi), \sum_{a=1}^{n+1} \theta_a)
}
$$
Note this is a finite $\BZ/(n+1)\BZ$-cover.

The contact structure on $T^*\BT^n \times T^1$ is the kernel of the one-form 
$$
\xymatrix{
\lambda = dt + \sum_{a=1}^{n+1} \xi_a d\theta_a
}
$$
Since $\chi = n+1$, so $\hat\chi = 1$, the pullback of $\lambda$ takes the form 
$$
\xymatrix{
\fp_\chi^*\lambda = \sum_{a=1}^{n+1} d\theta_a + \sum_{a=1}^{n+1} (\xi_a -\hat\chi) d\theta_a
= \sum_{a=1}^{n+1} \xi_a d\theta_a
}
$$
and thus its kernel defines the contact structure on $\Omega_{n+1}^\oo$. 

Finally, recall that $
\cL_n = \Gamma_{\fL_n, -f}$, with $f:\fL_n\to T^1$ represented by
 $$
 \xymatrix{
\tilde  f:\mu_\Delta^{-1}(\chi) \cap \Lambda_{n+1} \ar[r] & T^1 &
\tilde  f  =\sum_{a=1}^{n+1} (\xi_a - \hat \chi)\theta_a 
 }
 $$ 
 Define the section $s:\cL_n  \to \Omega^\oo_{n+1}$ by the formula
 $$
\xymatrix{
s ([ \theta_1, \ldots, \theta_{n+1}],
(\xi_1, \ldots, \xi_{n+1}),  -f)
=(( \theta_1, \ldots, \theta_{n+1}), [\xi_1 +\hat\chi, \ldots, \xi_{n+1}+\hat\chi]) 
}
$$
where we require $\theta_a = 0$ whenever $\xi_a + \hat \chi >0$, for $a\in [n+1]$. Note 
this arises for some $a\in [n+1]$, and thus constrains the homogeneous expression. Moreover,
since for all $a\in [n+1]$, either
$\theta_a = 0$ or $\xi_a = \hat \chi$ along $\fL_n$, the function $-f$, with $\hat\chi = 1$, is equal to $\sum_{a=1}^{n+1} \theta_a$
 along $\fL_n$. Thus we indeed have a section,
and since the inclusion $\fL_n \subset N_n$ is a homotopy equivalence,
the section extends from $\cL_n$ to $N_n$.
\end{proof}

By the lemma, the contact geometry of the circular contactification $N_n \subset T^* \BT^n \times T^1$ near 
the Legendrian lift $\cL_n$ is equivalent
to that of the open subspace $\Omega^\oo_{n+1} \subset S^\oo T^{n+1}$ near
the Legendrian subvariety $\Lambda^\oo_{n+1}$.
This is the microlocal interpretation we were seeking, but
for compatibility with standard refererences, which often adopt the  setting of exact symplectic  rather than contact geometry, it is useful to go one step further.

\begin{defn}
Let $N$ be a co-oriented contact manifold with contact form $\lambda$.
The {\em symplectification} of $N$ is the exact symplectic manifold $M  = N \times \BR$
with Liouville form $\alpha = e^t\lambda$.
\end{defn}

\begin{remark}
The contact geometry of $N$ is equivalent to the conic symplectic geometry of $M$. 
In particular, taking the inverse-image under the projection $M \to N$ induces a bijection from subspaces of $N$ to conic subspaces of $M$.
\end{remark}

\begin{remark}
We will only invoke the above definition in the following concrete situation.
Given an open subspace $\Omega^\oo \subset T^\oo Z$,
its symplectification
is the open  cone 
$$
\xymatrix{
\Omega = \{(z, \xi)  \, |\, \xi \not = 0,  (z, [\xi]) \in \Omega^\oo\} \subset T^* Z \setminus Z
}$$
with the restricted canonical Liouville form $\alpha_Z|_\Omega$
and  symplectic form $\omega_Z|_{\Omega} = d\alpha_Z|_\Omega$.
\end{remark}

Now we  can summarize the result of the above discussion in a final form.
Introduce the 
 circular contactification $Q_n \times T^1$, and then its symplectification 
$\tilde Q_n = Q_n \times T^1 \times \BR$,
with their natural projections
 $$
 \xymatrix{
\tilde Q_n = Q_n \times T^1 \times \BR \ar[r]^-s &   Q_n \times S^1 \ar[r]^-c &  Q_n
 }
 $$
 The skeleton $L_n \subset Q_n$ lifts under $c$ to the Legendrian subvariety $L_n \times\{0\}\subset  Q_n\times S^1$, and we can take its inverse-image under $s$ to obtain a conic Lagrangian subvariety
  $$
  \xymatrix{
  \tilde L_n = s^{-1}(L_n \times\{0\}) \subset\tilde Q_n
  }
  $$
 
 Then we have constructed the following lift of Theorem~\ref{thm: symplecto}.

\begin{thm}\label{thm: contacto}
Fix $\chi=n+1$.
There is a conic open neighborhood $\tilde U_n \subset \tilde Q_n$ of
the  Lagrangian subvariety  $\tilde L_n  \subset \tilde Q_n$,
 a conic open neigborhood $\Upsilon_{n+1} \subset \Omega_{n+1}$
 of
the  intersection  $\Lambda_{n+1} \cap \Omega_{n+1}$,
 and an exact symplectomorphism
$$
\xymatrix{
 \tilde{\mathfrak \jmath}:\tilde U_n\ar[r]^-\sim & \Upsilon_{n+1} 
 }
$$
restricting to an isomorphism
$$
\xymatrix{
 \tilde{\mathfrak \jmath} |_{\tilde L_n}:\tilde L_n \ar[r]^-\sim & \Lambda_{n+1} \cap \Omega_{n+1}
}
$$
\end{thm}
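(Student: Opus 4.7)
The plan is to combine Theorem~\ref{thm: symplecto} with the contact-geometric comparison developed in the discussion preceding Lemma~\ref{lem: contact cover}, and then symplectify. First I would take the open symplectomorphism $\mathfrak j: U_n \to \fU_n$ from Theorem~\ref{thm: symplecto}, which identifies $L_n$ with $\fL_n$ and intertwines the symplectic forms. The two natural Liouville primitives on $\fU_n$ for this symplectic form, namely the transported $\beta_n$ (for which $\fL_n$ is conic, hence trivially exact) and the canonical $\alpha_{T^*\BT^n}|_{\fU_n}$ (for which, with $\chi = n+1$, the explicit $f:\fL_n \to T^1$ exhibited in the excerpt is an integral primitive of its restriction to $\fL_n$), differ by a closed one-form $\gamma_n$. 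Since $\fL_n \hookrightarrow \fU_n$ is a homotopy equivalence and $\gamma_n|_{\fL_n}$ is integral, there is a unique primitive $g:\fU_n \to T^1$ of $\gamma_n$ with $g|_{\fL_n} = f$.

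By Remark~\ref{rem: compare contact}, the function $g$ then yields a contactomorphism $G: (N_n, \lambda_n) \to (N_n, \lambda_n')$ between the two circular contactifications of $\fU_n$, and a direct check shows that $G$ carries the graph $\cL_n = \Gamma_{\fL_n, -f}$ onto $\cL_n' = \fL_n \times \{0\}$. Combining this with the contactification $\mathfrak j \times \id_{T^1}$ of Theorem~\ref{thm: symplecto} produces a contactomorphism from a neighborhood of $L_n \times \{0\}$ inside the circular contactification $Q_n \times T^1$ onto $(N_n, \lambda_n)$ near $\cL_n$. At this point I would invoke Lemma~\ref{lem: contact cover}: for $\chi = n+1$ the canonical section $s: N_n \to \Omega_{n+1}^\oo$ is a contact embedding whose image realizes a neighborhood of $\Lambda_{n+1}^\oo \cap \Omega_{n+1}^\oo$ and satisfies $s(\cL_n) = \Lambda_{n+1}^\oo \cap \Omega_{n+1}^\oo$. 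Composing everything gives a contact embedding from a neighborhood of $L_n \times \{0\}$ in $Q_n \times T^1$ into $\Omega_{n+1}^\oo$, matching Legendrians as required.

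Finally, I would symplectify. A contactomorphism of co-oriented contact manifolds canonically lifts to an $\BR_{>0}$-equivariant exact symplectomorphism of their symplectifications, intertwining the scaling Liouville forms $e^t\lambda$. Applying this to the composite contact embedding built above, and taking $\Upsilon_{n+1}$ to be the image conic open subspace, produces the desired exact symplectomorphism $\tilde{\mathfrak \jmath}: \tilde U_n \to \Upsilon_{n+1}$ whose restriction to the conic lifts is the isomorphism $\tilde L_n \cong \Lambda_{n+1} \cap \Omega_{n+1}$. The main subtlety I anticipate is the bookkeeping around Lemma~\ref{lem: contact cover}: the map $\fp_\chi$ is a nontrivial $\BZ/(n+1)$-cover, so one must verify both that the section $s$ is well-defined on the full neighborhood $N_n$ (not only along $\cL_n$) and that the sheet it picks out is consistent with our chosen $f$. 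Granting the lemma, the remaining steps — comparing Liouville primitives, building $G$, and lifting to the symplectification — are essentially formal.
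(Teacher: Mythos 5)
Your proposal reconstructs the paper's argument: it assembles Theorem~\ref{thm: symplecto}, the comparison of the two primitives $\beta_n$ and $\alpha_{T^*\BT^n}|_{\fU_n}$ via the integral primitive $g$ of $\gamma_n$ normalized by $g|_{\fL_n}=f$ (Remark~\ref{rem: compare contact}), Lemma~\ref{lem: contact cover} for $\chi=n+1$, and then symplectifies the resulting composite contact embedding. This is exactly the route the paper takes in the discussion that precedes the theorem statement; you even correctly flag the one point (well-definedness of the section $s$ over all of $N_n$ and consistency of the chosen sheet) that the paper handles inside the proof of Lemma~\ref{lem: contact cover} via the homotopy equivalence $\fL_n\hookrightarrow N_n$.
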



%
%
%
%
%


\subsection{Mirror symmetry}


Our aim in this section is to calculate wrapped microlocal sheaves on the tailored pair of pants  $Q_n$
supported along the skeleton $L_n$. 

Recall the conic Lagrangian subvariety
$$
\xymatrix{
\Lambda_{1} = \{(\theta, 0) \, | \, \theta\in T^1\} \cup \{(0, \xi) \, |\, \xi \in \BR_{\geq 0}\} \subset T^1 \times \BR \simeq
T^*T^1
}$$
and the product conic Lagrangian subvariety
$$
\xymatrix{
\Lambda_{n+1} = (\Lambda_1)^{n+1} \subset (T^* T^1)^{n+1} \simeq T^* T^{n+1}
}$$
Recall the conic open subspace
$$
\xymatrix{
\Omega_{n+1} = \{\sum_{a = 1}^{n+1} \xi_a > 0\} \subset T^* T^{n+1}
}$$

The constructions of the preceding section summarized in Theorem~\ref{thm: contacto}
justify the following starting point.

\begin{ansatz}
Set the dg category $\mu\Sh_{L_{n}}(Q_n)$ of wrapped microlocal sheaves on the tailored pair of pants $Q_n$ 
supported along  the skeleton $L_{n}$ to be the dg category
of wrapped microlocal sheaves on $\Omega_{n+1}$ supported along $\Lambda_{n+1}$.
\end{ansatz}

Let us begin with the following variant of Example~\ref{ex:cylinder}.

\begin{lemma}\label{lemma:cylinder}
There are mirror equivalences
$$
\xymatrix{
\Sh_{\Lambda_{n+1}}^\un(T^{n+1}) \simeq \QCoh(\BA^{n+1})
&
 \Sh_{\Lambda_{n+1}}(T^{n+1}) \simeq \Perf_\proper(\BA^{n+1})
}
$$
$$
\xymatrix{
\Sh_{\Lambda_{n+1}}^w(T^{n+1}) \simeq \Coh(\BA^{n+1})
}
$$
\end{lemma}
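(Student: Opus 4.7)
The plan is to reduce to the one-dimensional case and then invoke a Künneth-type product formula. We begin with $n=0$, i.e.\ the computation of the three dg categories associated to $\Lambda_1 \subset T^*T^1$. Let $\cS_1 = \{\{0\},(0,2\pi)\}$ be the stratification appearing in Example~\ref{ex:cylinder}. Since $\Lambda_1 \subset T^*_{\cS_1}T^1$, there are full embeddings
\[
\Sh^\un_{\Lambda_1}(T^1) \subset \Sh^\un_{\cS_1}(T^1),
\quad
\Sh_{\Lambda_1}(T^1) \subset \Sh_{\cS_1}(T^1),
\]
and under the mirror equivalences $\Sh^\un_{\cS_1}(T^1) \simeq \QCoh(\BP^1)$, $\Sh_{\cS_1}(T^1) \simeq \Perf(\BP^1)$ of Example~\ref{ex:cylinder} I would identify these subcategories. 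A sheaf lies in $\Sh^\un_{\Lambda_1}(T^1)$ precisely when its microstalk at the missing Legendrian point $(0,-1) \in T^*T^1$ vanishes; by the description of the hyperbolic restrictions $\eta_{\pm}$ recalled in Example~\ref{ex:cylinder}, this is the condition that the quiver morphism $x$ is invertible, equivalently that the corresponding quasi-coherent sheaf on $\BP^1$ is supported on the affine open $\BA^1 = \BP^1 \setminus \{x=0\}$. This gives
\[
\Sh^\un_{\Lambda_1}(T^1) \simeq \QCoh(\BA^1),
\qquad
\Sh_{\Lambda_1}(T^1) \simeq \Perf_\proper(\BA^1).
\]
Passing to compact objects in the first equivalence, and using that $\BA^1$ is smooth so that $\Perf(\BA^1) = \Coh(\BA^1)$, gives $\Sh^w_{\Lambda_1}(T^1) \simeq \Coh(\BA^1)$.

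For the general case, the idea is that $\Lambda_{n+1}$ and the stratification by coordinate tori decompose as $(n+1)$-fold products, so one should have a Künneth equivalence
\[
\Sh^\un_{\Lambda_{n+1}}(T^{n+1}) \;\simeq\; \Sh^\un_{\Lambda_1}(T^1)^{\otimes (n+1)},
\]
with analogous statements for the traditional and wrapped variants (using the appropriate tensor product of small stable dg categories and of presentable ones). Given such a Künneth formula and the one-dimensional calculation, the three asserted equivalences follow from the standard identifications
\[
\QCoh(\BA^1)^{\otimes(n+1)} \simeq \QCoh(\BA^{n+1}),
\quad
\Perf_\proper(\BA^1)^{\otimes(n+1)} \simeq \Perf_\proper(\BA^{n+1}),
\]
\[
\Coh(\BA^1)^{\otimes(n+1)} \simeq \Coh(\BA^{n+1}),
\]
where the last two use that $\BA^{n+1}$ is smooth affine.

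The main obstacle is to justify the Künneth equivalence at the level of (large) microlocal sheaves with the prescribed singular support. The natural approach is to use the external tensor product $\boxtimes$ on constructible sheaves, note that it respects singular support in the product sense $\ssupp(\cF\boxtimes\cG) \subset \ssupp(\cF)\times\ssupp(\cG)$, and check that the resulting functor $\Sh^\un_{\Lambda_1}(T^1)^{\otimes(n+1)} \to \Sh^\un_{\Lambda_{n+1}}(T^{n+1})$ is an equivalence by verifying it on a set of compact generators (for instance the microlocal skyscrapers produced by Lemma~\ref{lem:splitgen}) and by matching microstalks coordinate-by-coordinate. The compatibility with the passage to compact objects, respectively to objects with perfect microstalks, then transfers Künneth to the wrapped and traditional categories, completing the proof.
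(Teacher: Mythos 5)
Your argument is essentially the same as the paper's: reduce to the $n=0$ case via the Kronecker-quiver description from Example~\ref{ex:cylinder} (identifying the $\Lambda_1$-microsupported objects as the quasi-coherent sheaves on the affine open $\BA^1 \subset \BP^1$), then pass to products, and obtain the wrapped and traditional variants by taking compact objects and restricting to $\Perf_k$-valued stalks, respectively. The only real difference is the treatment of the product step: you frame it as an abstract K\"unneth theorem for (microlocal) sheaf categories and flag it as the ``main obstacle,'' whereas the paper sidesteps any general K\"unneth argument by observing that $\Sh^\un_{\cS_1 \times \cdots \times \cS_1}(T^{n+1})$ is literally modules over the product quiver category $K_1^{\times(n+1)}$, where the K\"unneth identity $\Mod_k(K_1)^{\otimes(n+1)} \simeq \Mod_k(K_1^{\times(n+1)})$ is immediate and the $\Lambda_{n+1}$-support condition is simply invertibility of each $x_a$. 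If you take the concrete exit-path/quiver model as the paper does, the ``obstacle'' you identify dissolves, and you don't need the auxiliary input of external tensor products and microstalk-matching.
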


\begin{proof}

Following Example~\ref{ex:cylinder}, recall the Kronecker quiver category $K_1$
with two objects $a, b$, and two non-identity morphisms $x, y:a\to b$.
Recall
 the equivalences
$$
\xymatrix{
\Sh_{\cS_1}^\un(T^1) \simeq \Mod_k(K_1)\simeq \QCoh(\BP^1)
}
$$

Following Example~\ref{ex:cylinder}, the inclusion $\Lambda_1 \subset T^*_{\cS_1} T^1$ induces a full embedding
$
\Sh_{\Lambda_1}^\un(T^1) \subset \Sh_{\cS_1}^\un(T^1)
$
with image those modules such that the quiver map $x$ is invertible.
Thus for the inclusion $j:\BA^1= \Spec k[y/x] = \{x\not =0\} \to \BP^1$, we have the identification of full  subcategories
$$
\xymatrix{
\Sh_{\Lambda_{1}}^\un(T^1) \simeq j_*(\QCoh(\BA^1))
}$$

Taking products over the product quiver category, we obtain the first asserted equivalence. The third immediately follows by taking compact objects. For the second, one can repeat the above argument working with quiver modules with values in perfect $k$-modules.
\end{proof}

Fix a subset $I \subset [n+1]$, with complement $I^c = [n+1] \setminus I$.

 Let $T^I \subset T^{n+1}$ be the subtorus defined by $\theta_a = 0$, for $a \in I^c$.
 Let $\Lambda_I = (\Lambda_1)^I \subset (T^* T^1)^I \simeq T^* T^I$ be the product conic Lagrangian subvariety.

Consider the hyperbolic restriction 
$$
\xymatrix{
\eta_I:\Sh_{\Lambda_{n+1}}^\un(T^n)\ar[r] &  \Sh_{\Lambda_I}^\un(T^I)
& \eta_I(\cF) = p_* q^!\cF
}
$$
built from the correspondence
 $$
 \xymatrix{
 T^I & \ar[l]_-p T^I \times [0, 1/2)^{I^c} \ar[r]^-{q} & T^{n+1}
 }
 $$
 where $p$ is the evident projection and $q$ the evident inclusion.
  
  To verify $\eta_I$ indeed respects the singular support condition specified, note that it is  simply the product of copies of the hyperbolic restriction $\eta_+$
   introduced in Example~\ref{ex:cylinder}
    in the coordinate directions indexed by $I^c$,  and the identity    in the coordinate directions indexed by $I$. 


Let $f:\BA^I  = \Spec k[t_a \, |\, a\in I\}  \to \BA^n = \Spec k[t_1, \ldots, t_n]$ be the affine subspace defined by $t_a = 0$, for $a \not \in I$. 

\begin{lemma}\label{lemma:restriction}
The first and second equivalences of Lemma~\ref{lemma:cylinder} canonically extend to commutative diagrams
$$
\xymatrix{
\ar[d]_-{\eta}\Sh_{\Lambda_{n+1}}^\un(T^{n+1}) \ar[r]^-\sim &  \QCoh(\BA^{n+1})\ar[d]^-{f^*}
&
\ar[d]_-{\eta}\Sh_{\Lambda_{n+1}}(T^{n+1}) \ar[r]^-\sim &  \Perf_\proper(\BA^{n+1})\ar[d]^-{f^*}\\
\Sh_{\Lambda_{I}}(T^{I}) \ar[r]^-\sim &  \QCoh(\BA^{I})&
\Sh_{\Lambda_{I}}(T^{I}) \ar[r]^-\sim &  \Perf_\proper(\BA^{I})
}
$$
\end{lemma}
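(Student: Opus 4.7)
The plan is to reduce the statement to the one-dimensional case $n+1 = 1$, $I = \emptyset$, which is essentially settled by Example~\ref{ex:cylinder}, and then obtain the general assertion by a Künneth-type compatibility. Recall from the proof of Lemma~\ref{lemma:cylinder} that the equivalence $\Sh_{\Lambda_{n+1}}^\un(T^{n+1}) \simeq \QCoh(\BA^{n+1})$ is built as an external product of $(n+1)$ copies of the one-dimensional equivalence $\Sh_{\Lambda_1}^\un(T^1) \simeq \QCoh(\BA^1)$. The hyperbolic restriction $\eta_I$ decomposes as the external product of the one-dimensional functor $\eta_+$ in each direction $a\in I^c$ with the identity in each direction $a\in I$. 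On the algebraic side, the inclusion $f:\BA^I\to \BA^{n+1}$ factors as the product of $\{0\}\hookrightarrow \BA^1$ in the directions $a\in I^c$ with the identity on $\BA^1$ in the directions $a\in I$, so $f^*$ decomposes correspondingly. Granting compatibility of the equivalence of Lemma~\ref{lemma:cylinder} with these external product structures, the general commutativity will follow at once from the one-dimensional case.

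For that one-dimensional case, I would extract the required identification directly from Example~\ref{ex:cylinder}. Under the equivalence $\Sh_{\cS_1}^\un(T^1) \simeq \QCoh(\BP^1)$ recorded there, the hyperbolic restriction $\eta_+$ corresponds to the $*$-pullback to the pole $\{y = 0\}\subset \BP^1$. The full subcategory $\Sh_{\Lambda_1}^\un(T^1) \subset \Sh_{\cS_1}^\un(T^1)$ is identified in the proof of Lemma~\ref{lemma:cylinder} with the image of $j_*:\QCoh(\BA^1)\to \QCoh(\BP^1)$ for the affine chart $j:\BA^1 = \{x\neq 0\} = \Spec k[y/x]\hookrightarrow \BP^1$; in this chart the pole $\{y = 0\}$ is the origin $\{y/x = 0\}$. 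Hence $\eta_+$ goes over to $*$-pullback along $\Spec k \to \BA^1$ at the origin, which is exactly the required $f^*$.

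Finally, for the second diagram, involving $\Sh_{\Lambda_{n+1}}(T^{n+1})$ and $\Perf_\proper(\BA^{n+1})$, the same argument applies once each vertical arrow is checked to restrict to the asserted subcategory. On the sheaf side, $\eta_I$ preserves constructibility because it is assembled from $!$-restriction along a locally closed inclusion and pushforward along a product projection, both of which respect the stratified geometry governing $\Lambda_{n+1}$ and $\Lambda_I$; the microstalk interpretation further shows the output is $\Perf_k$-valued when the input lies in $\Sh_{\Lambda_{n+1}}(T^{n+1})$. On the algebraic side, $f$ is a regular closed embedding of smooth affine schemes, so $f^*$ preserves perfect complexes, and $\supp(f^*\cF) = \supp(\cF)\cap \BA^I$ stays proper whenever $\supp(\cF)$ does. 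The main technical point to nail down will be the one-dimensional identification of $\eta_+$ with pullback to the origin of the correct affine chart; once this is set, the higher-dimensional statement is a formal consequence of compatibility with external products.
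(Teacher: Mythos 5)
Your proposal matches the paper's proof in essence: both reduce to the one-dimensional case via the external product structure of the equivalence of Lemma~\ref{lemma:cylinder}, identify $\eta_+$ with $*$-restriction $\QCoh(\BP^1) \to \Mod_k$ at the pole $\{y=0\}$ viewed inside the affine chart $\BA^1 = \{x\neq 0\}$, and then note that the second diagram follows once constructibility is preserved. The one place you are slightly imprecise is in justifying that $\eta_I$ preserves constructibility: the projection $p$ has non-compact fibers $[0,1/2)^{I^c}$, so its pushforward does not preserve constructibility in general, and one must invoke the singular support constraint (as you partly do via microstalks) to see that $p_*q^!$ behaves like a hyperbolic localization with finite-dimensional output on constructible inputs.
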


\begin{proof}
Since $\eta_I$ preserves constructible sheaves, it suffices to construct the first commutative diagram.
 Recall that the hyperbolic restriction 
 $
 \eta_+:\Sh_{\cS_1}^\un(T^1)\to  \Mod_k
 $
  introduced in Example~\ref{ex:cylinder} corresponds to the $*$-restriction $\QCoh(\BP^1) \to \Mod_k$ to the point of projective space $\{y = 0\} \subset  \BP^1$. Hence under the 
 first equivalence of Lemma~\ref{lemma:cylinder}, its restriction to the full subcategory $\Sh_{\Lambda_1}^\un(T^1) \subset \Sh_{\cS_1}^\un(T^1)$
corresponds to the 
  $*$-restriction $\QCoh(\BA^1) \to \Mod_k$ to the same point regarded in the affine space $\BA^1  = \Spec k[y/x] = \{x \not = 0\} \subset \BP^1$. 
   Now recall that $\eta_I$ is  the product of copies of the hyperbolic restriction $\eta_+$
    in the coordinate directions indexed by $I^c$,  and the identity    in the coordinate directions indexed by $I$. 
   Thus taking products over the coordinate directions, we obtain the assertion.
\end{proof}

Now we arrive at our main calculation. 
Introduce 
 the union of coordinate hyperplanes
$$
\xymatrix{
X_{n} = \{ t_1 \cdots t_{n+1} = 0\} \subset \BA^{n+1}
}
$$

\begin{thm}\label{thm:main calc}
There are mirror  equivalences
$$
\xymatrix{
\mu\Sh_{\Lambda_{n+1}}^\un(\Omega_{n+1}) \simeq \Ind \Coh(X_{n})
&
\mu\Sh_{\Lambda_{n+1}}(\Omega_{n+1}) \simeq \Perf_\proper(X_{n})
}
$$
$$
\xymatrix{
\mu\Sh_{\Lambda_{n+1}}^w(\Omega_{n+1}) \simeq \Coh(X_{n})
}
$$
\end{thm}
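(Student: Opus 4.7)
The plan is to realize microlocal sheaves on $\Omega_{n+1}$ supported along $\Lambda_{n+1}$ as an explicit Verdier quotient of sheaves on $T^{n+1}$, and then transport the quotient across the mirror equivalences of Lemma~\ref{lemma:cylinder}.

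First I would unwind the defining localization. Applying Case 2) of Section~\ref{s:microsh} to our setup, and observing that any sheaf in $\Sh^\un_{\Lambda_{n+1}}(T^{n+1})$ already has singular support contained in $\Lambda_{n+1}$, the kernel of the localization $\Sh^\un_{\Lambda_{n+1}}(T^{n+1}) \to \mu\Sh^\un_{\Lambda_{n+1}}(\Omega_{n+1})$ consists of sheaves with singular support disjoint from $\Omega_{n+1}$. Since each factor of $\Lambda_1$ satisfies $\xi_a \geq 0$, the constraint $\sum \xi_a \leq 0$ forces all $\xi_a = 0$, so $\Lambda_{n+1} \cap (T^* T^{n+1} \setminus \Omega_{n+1})$ is precisely the zero section. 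Hence this kernel equals $\Loc^\un(T^{n+1})$, giving
\[ \mu\Sh^\un_{\Lambda_{n+1}}(\Omega_{n+1}) \simeq \Sh^\un_{\Lambda_{n+1}}(T^{n+1})/\Loc^\un(T^{n+1}), \]
with the analogous expression for the traditional and wrapped versions.

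Second, by Lemma~\ref{lemma:cylinder} the ambient category is identified with $\QCoh(\BA^{n+1})$, and I would match the subcategory $\Loc^\un(T^{n+1})$ with $j_*\QCoh(\G_m^{n+1}) \subset \QCoh(\BA^{n+1})$, where $j:\G_m^{n+1}\hookrightarrow \BA^{n+1}$ is the open embedding of the complement of the coordinate hyperplanes. This can be verified factor by factor from Example~\ref{ex:cylinder}: a local system corresponds to a Kronecker quiver module where both maps $x$ and $y$ are invertible, which in the affine chart $\BA^1 = \on{Spec} k[t]$ with $t = y/x$ becomes exactly the condition that $t$ is invertible. The analogous statement for constructible coefficients identifies $\Loc(T^{n+1}) \subset \Sh_{\Lambda_{n+1}}(T^{n+1})$ with $j_*\Perf_\proper(\G_m^{n+1}) \subset \Perf_\proper(\BA^{n+1})$, since finite-rank local systems on $T^{n+1}$ are exactly finite-dimensional modules over $k[\BZ^{n+1}] \simeq k[t_1^{\pm 1}, \ldots, t_{n+1}^{\pm 1}]$.

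Third, I would invoke the recollement for the closed lci hypersurface $X_n \subset \BA^{n+1}$ with open complement $\G_m^{n+1}$. Since $\BA^{n+1}$ is smooth, so $\QCoh(\BA^{n+1}) \simeq \Ind\Coh(\BA^{n+1})$, the ind-coherent $!$-pullback produces a fiber sequence
\[ j_* \QCoh(\G_m^{n+1}) \to \QCoh(\BA^{n+1}) \xrightarrow{i^!} \Ind\Coh(X_n), \]
identifying the Verdier quotient with $\Ind\Coh(X_n)$ and yielding the first asserted equivalence. Restricting the same argument to perfect complexes with proper support gives the second equivalence with $\Perf_\proper(X_n)$. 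The third equivalence then follows by passing to compact objects in the first: $\mu\Sh^w_{\Lambda_{n+1}}(\Omega_{n+1}) = \kappa(\mu\Sh^\un_{\Lambda_{n+1}}(\Omega_{n+1})) \simeq \kappa(\Ind\Coh(X_n)) = \Coh(X_n)$. The main obstacle is this third step: ensuring that the cofiber of $j_*\QCoh(\G_m^{n+1}) \hookrightarrow \QCoh(\BA^{n+1})$ is $\Ind\Coh(X_n)$ rather than $\QCoh(X_n)$, which differs for singular $X_n$; this hinges on the recollement interpretation of $i^!$ in the ind-coherent setting for closed lci embeddings into smooth ambient schemes. A purely descent-theoretic alternative would use Proposition~\ref{prop:descent} to present $X_n$ as a colimit of its smooth coordinate subspaces $X_I$ for proper $I\subset [n+1]$, match this with a cosheaf decomposition of $\mu\Sh^w_{\Lambda_{n+1}}(\Omega_{n+1})$ obtained by stratifying $\Lambda_{n+1}\cap\Omega_{n+1}$ by nonempty subsets $I$, and reduce each local stalk to an instance of Lemma~\ref{lemma:cylinder} on a lower-dimensional torus.
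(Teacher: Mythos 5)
Your reduction to a global Verdier quotient breaks down, and the case $n=0$ shows it concretely. On the microlocal side, $\Lambda_1 \cap \Omega_1$ is a single smooth ray, so $\mu\Sh^\un_{\Lambda_1}(\Omega_1)$ is the microlocal stalk at a smooth Lagrangian point, namely $\Mod_k$, agreeing with $\Ind\Coh(X_0) = \Ind\Coh(\pt) = \Mod_k$. On the other hand, the quotient $\QCoh(\BA^1)/j_*\QCoh(\G_m)$ you propose is not $\Mod_k$: since the inclusion $j_*\QCoh(\G_m) \hookrightarrow \QCoh(\BA^1)$ admits the left adjoint $j_*j^*$, the Verdier quotient is identified with its left orthogonal $\QCoh_{\{0\}}(\BA^1)$, the full subcategory of $t$-torsion quasicoherent complexes, equivalently modules over the derived endomorphism algebra $\End_{k[t]}(k)$ (which has cohomology in degrees $0$ and $1$). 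So your first step is already false. The Case 2) description in Section~\ref{s:microsh} identifies $\mu\Sh^\un_\Lambda(\Omega)$ with a Verdier quotient of $\Sh^\un_\Lambda(B)$ only for $\Omega$ the cone over a \emph{small} ball; over a large conic open like $\Omega_{n+1}$, $\mu\Sh^\un_\Lambda$ is the sheaf glued from those local models, and its global sections strictly differ from the naive quotient of $\Sh^\un_{\Lambda_{n+1}}(T^{n+1})$.

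The same example pins down the error in your third step: the cofiber of $j_*\QCoh(\G_m^{n+1}) \hookrightarrow \QCoh(\BA^{n+1})$ is $\Ind\Coh_{X_n}(\BA^{n+1})$, the category of ind-coherent complexes on $\BA^{n+1}$ set-theoretically supported on $X_n$ (equivalently, ind-coherent sheaves on the formal neighborhood of $X_n$), and the pushforward $i_*:\Ind\Coh(X_n)\to\Ind\Coh_{X_n}(\BA^{n+1})$ is neither fully faithful nor essentially surjective. The obstacle you flag is real, but it is not $\Ind\Coh(X_n)$ versus $\QCoh(X_n)$; it is $\Ind\Coh(X_n)$ versus ind-coherent sheaves on the formal completion, and no recollement bridges the two. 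The descent-theoretic alternative you mention at the end is in fact the paper's proof and is not an interchangeable shortcut: one covers $\Omega_{n+1}$ by the conic opens $\Omega_I=\Omega_{n+1}\cap\{\xi_a\neq 0 \text{ for } a\notin I\}$ indexed by proper $I\subset[n+1]$, uses the sheaf property to write $\mu\Sh^\un_{\Lambda_{n+1}}(\Omega_{n+1})\simeq\lim_I\mu\Sh^\un_{\Lambda_{n+1}}(\Omega_I)$, identifies each term with $\Sh^\un_{\Lambda_I}(T^I)\simeq\Ind\Coh(\BA^I)$ via hyperbolic restriction (with inverse the pushforward along $T^I\hookrightarrow T^{n+1}$), and matches the resulting limit diagram with the one computing $\Ind\Coh(X_n)$ via Proposition~\ref{prop:descent}. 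It is this use of the sheaf property, not a single quotient presentation, that produces $\Ind\Coh(X_n)$ rather than ind-coherent sheaves on the formal neighborhood.
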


\begin{proof}
Let us recall the setup of the descent of  Proposition~\ref{prop:descent}.

Let $\cI_{n+1}$ be the category with objects subsets $I \subset [n+1]$, and morphisms $I\to I'$  inclusions $I\subset I'$. Let $\cI^\circ_{n+1}\subset \cI_{n+1}$ denote the full subcategory of proper subsets $I \not = [n+1]$.

 Let $\fX_k$ denote the category of affine lci $k$-schemes and closed embeddings. 
 
 Define a functor $A:\cI_{n+1}^\circ \to \fX_k$ as follows.
For an object $I\in\cI_{n+1}^\circ$,  take the affine space $A(I) = \BA^I = \Spec k[t_a \, |\, a\in I]$, 
and  for a morphism $I\subset I'$,
  take the inclusion $A(I, I'):\BA^I = \Spec k[t_a \, |\, a\in I] \to \Spec k[t_a \, |\, a\in I'] =  \BA^{I'}$, given by setting $t_a = 0$, for each $a\in I'\setminus I$,
  
 Recall the functor $\Ind\Coh^*: \fX_k^{op} \to \dgSt_k$ that assigns to a scheme its ind-coherent sheaves and to a proper morphism of schemes its $*$-pullback.
 Recall also the full subfunctor $\Perf^*_\proper : \fX_k^{op} \to \dgst_k$ of perfect complexes with proper support.
 
 Consider the composite functor  
 $\Ind\Coh^* \circ A:(\cI_{n+1}^\circ)^{op} \to \dgSt_k$, and subfunctor $\Perf^*_\proper \circ A:(\cI_{n+1}^\circ)^{op} \to \dgst_k$. By the descent of Proposition~\ref{prop:descent}, the canonical maps are equivalences
 $$
 \xymatrix{
 \Ind\Coh(X_{n}) \ar[r]^-\sim & \lim_{(\cI_{n+1}^\circ)^{op}} \Ind\Coh (\BA^I)
&
 \Perf_\proper(X_{n}) \ar[r]^-\sim & \lim_{(\cI_{n+1}^\circ)^{op}} \Perf_\proper (\BA^I)
 }
 $$
  $$
 \xymatrix{
 \Coh(X_{n}) & \ar[l]_-\sim \colim_{\cI^\circ_{n+1}} \Coh (\BA^I)
}
$$

To prove the theorem, we will similarly identify $\mu\Sh_{\Lambda_{n+1}}^\un(\Omega_{n+1})$ as the limit of a functor
 $$
 \xymatrix{
 \mu\Sh^\un:(\cI_{n+1}^\circ)^{op}\ar[r] &  \St_k
 }
 $$
 and  then  provide an equivalence of functors $\mu\Sh^\un \simeq \Ind\Coh^* \circ A$. This will immediately provide the first and third asserted equivalences. 
 
 For the second, 
 we will observe that 
 $\mu\Sh_{\Lambda_{n+1}}(\Omega_{n+1})$ is the limit of a full subfunctor
 $
 \mu\Sh\subset \mu\Sh^\un
 $
equivalent to the subfunctor  $\Perf^*_\proper \circ A \subset \Ind\Coh^*\circ A$.
  
  \medskip
  
 Now to each $I\in \cI_{n+1}^\circ$,  introduce the conic open subspace $\Omega_I \subset \Omega_{n+1}$ cut out by the additional requirement  $\xi_a \not = 0,$ for $a \not \in I$. Thus for $I\subset I'$, we have the open inclusion $\Omega_I \subset \Omega_{I'}$, and for $I = [n+1]$, we have $\Omega_I = \Omega_{n+1}$. 
 Note that the collection $\{\Omega_I\}_{I\in \cI_{n+1}^\circ}$ forms a conic open cover of $\Omega_{n+1}$
 with the property  $\Omega_{I\cap I'} = \Omega_I\cap \Omega_{I'}$.
 
 Define the functor 
 $$
 \xymatrix{
 \mu\Sh^\un:(\cI_{n+1}^\circ)^{op} \ar[r] & \dgSt_k
 &
 \mu\Sh^\un(I) = \mu\Sh^\un_{\Lambda_{n+1}}(\Omega_I)
 }
 $$ 
  with inclusions $I\subset I'$ taken to the restriction maps along the inclusions $\Omega_I \subset \Omega_{I'}$.
 
 Define the full subfunctor
  $
 \mu\Sh \subset \mu\Sh^\un
 $
 by taking
 $
 \mu\Sh(I) = \mu\Sh_{\Lambda_{n+1}}(\Omega_I)
 $

 Since $\mu\Sh_{\Lambda_{n+1}}^\un$ forms a sheaf, $ \mu\Sh_{\Lambda_{n+1}} \subset \mu\Sh^\un_{\Lambda_{n+1}}$ a full subsheaf,  and $\{\Omega_I\}_{I\in \cI_{n+1}^\circ}$ an open conic cover of $\Omega_{n+1}$, the canonical maps  are equivalences
 $$
 \xymatrix{
 \mu\Sh^\un_{\Lambda_{n+1}}(\Omega_{n+1}) \ar[r]^-\sim & \lim_{(\cI_{n+1}^\circ)^{op}} \mu\Sh^\un_{\Lambda_{n+1}}(\Omega_I)
 }
 $$
 $$
 \xymatrix{
 \mu\Sh_{\Lambda_{n+1}}(\Omega_{n+1}) \ar[r]^-\sim & \lim_{(\cI_{n+1}^\circ)^{op}} \mu\Sh_{\Lambda_{n+1}}(\Omega_I)
 }
 $$

Next let us define an  additional functor to interpolate between $\Ind \Coh\circ A$ and $ \mu\Sh^\un$.
 
 For $I\in \cI_{n+1}^\circ$,  recall the subtorus  $T^I =(T^1)^I \subset T^n$  defined by $\theta_a = 0$, for $a\in I^c$,
 and  the corresponding product  conic Lagrangian
 subvariety $\Lambda_I = (\Lambda_1)^I \subset (T^* T^1)^I \simeq  T^* T^I$.

 Define the functor
  $$
 \xymatrix{
 \Sh^\un:(\cI_{n+1}^\circ)^{op} \ar[r] & \dgSt_k
 &
 \Sh^\un(I) = \Sh^\un_{\Lambda_I}(T^I)
 }
 $$ 
 with inclusions $I\subset I'$ taken to the hyperbolic restrictions
 $$
 \xymatrix{
 \eta_{I \subset I'}:\Sh^\un_{\Lambda_{I'}}(T^{I'})\ar[r] &  \Sh^\un_{\Lambda_{I}}(T^{I})   &
\eta_{I \subset I'}(\cF) =  p_*q^!\cF
 }
 $$ 
 built from the correspondences
 $$
 \xymatrix{
 T^I & \ar[l]_-p T^I  \times [0, 1/2)^{I' \setminus I} \ar[r]^-q & T^{I'}
 }
 $$
 where $p$ is the evident projection and $q$ the evident inclusion. 
 
 Note that $ \eta_{I \subset I'}$ is simply the product of  hyperbolic restrictions in the coordinate directions indexed by $I'\setminus I$, and the identity in the coordinate directions indexed by $ I$.
In particular,   it satisfies evident compatibilities providing $ \Sh^\un$ with the structure of a functor.
  
  Define the full subfunctor
   $
 \Sh \subset \Sh^\un
 $ 
  by taking
 $
 \Sh(I) = \Sh_{\Lambda_I}(T^I).
 $ 
 
 Now Lemmas~\ref{lemma:cylinder} and~\ref{lemma:restriction}, along with evident compatibilities, 
 confirm we have equivalences 
 $$
 \xymatrix{
  \Sh^\un \simeq \Ind\Coh^*\circ A
 &
  \Sh \simeq \Perf^*_\proper\circ A
  }
 $$
 
 It remains to establish equivalences of functors
 $$
 \xymatrix{
  \Sh^\un \simeq \mu\Sh^\un
 &
  \Sh \simeq \mu\Sh
  }
 $$

For any $I\in \cI_{n+1}^\circ$, let us return to the hyperbolic restriction
 $$
 \xymatrix{
 \eta_I:\Sh^\un_{\Lambda_{n+1}}(T^{n+1})\ar[r] &  \Sh^\un_{\Lambda_{I}}(T^{I})   &
\eta_I(\cF) =  p_*i^!\cF
 }
 $$ 
 built from the correspondence
 $$
 \xymatrix{
 T^I & \ar[l]_-p T^I  \times [0, 1/2)^{I^c} \ar[r]^-q & T^{n+1}
 }
 $$
 
 First, observe that $\eta$ factors through the microlocalization
 $$
 \xymatrix{
 \Sh^\un_{\Lambda_{n+1}}(T^{n+1}) 
 \ar[r] & \mu\Sh^\un_{\Lambda_I}(\Omega_I)\ar[r]^-{\tilde\eta_I} &  \Sh^\un_{\Lambda_{I}}(T^{I})   &
 }
 $$ 
since the hyperbolic restriction in the coordinate direction indexed by $a\in I^c$ vanishes on sheaves whose singular support does not intersect the locus $\{\xi_a >0\} \subset T^* T^{n+1}$.

 Next, for $I \subset I'$, observe that the induced functors extend to natural commutative diagrams
 $$
 \xymatrix{
 \ar[d]_{\rho_{I \subset I'}}\mu\Sh^\un_{\Lambda_{I'}}(\Omega_{I'})\ar[r]^-{\tilde \eta_{I'}} &  \Sh^\un_{\Lambda_{{I'}}}(T^{I'})  \ar[d]^{\eta_{I \subset I'}}  \\
 \mu\Sh^\un_{\Lambda_I}(\Omega_I)\ar[r]^-{\tilde \eta_{I}} &  \Sh^\un_{\Lambda_{I}}(T^{I})   
 }
 $$ 
following the identities of Example~\ref{ex cod 1}.

Thus we have a map of functors $\tilde \eta:\mu\Sh^\un\to \Sh^\un$, restricting to a map of subfunctors $\mu\Sh\to \Sh$. It remains to show  $\tilde \eta:\mu\Sh^\un\to \Sh^\un$  is an equivalence. 
 For this, it suffices to  fix $I\in \cI_{n+1}^\circ$ and show that
 $$
 \xymatrix{
\tilde \eta_I :\mu\Sh^\un_{\Lambda_I}(\Omega_I)\ar[r] &  \Sh^\un_{\Lambda_I}(T^I)
 }
 $$ 
 is an equivalence. But it admits an inverse induced by the pushforward
 $$
 \xymatrix{
 j_{I*}:\Sh^\un_{\Lambda_I}(T^I) \ar[r] &  \Sh^\un_{\Lambda_{n+1}}(T^{n+1})
 }
 $$ 
 along the natural inclusion $j_I: T^I \to T^{n+1}$.  To see this, 
 note that $ j_{I}$ is simply the product of  the inclusions in the coordinate directions indexed by $I^c$,
 and the identity in the coordinate directions indexed by $I$.

 This concludes the proof of the theorem.
\end{proof}

Composing the last equivalence of Theorem~\ref{thm:main calc} with the equivalence of Proposition~\ref{prop:mfcohequiv}, we obtain the following. Recall that to a dg category $\cC$, we assign its folding $\cC_{\BZ/2}$ by forgetting structure to obtain a $\BZ/2$-dg category.

\begin{corollary}\label{cor:mirrorsym}
There is an  equivalence of $\BZ/2$-dg categories
$$
\xymatrix{
\mu\Sh_{\Lambda_{n+1}}^w(\Omega_{n+1})_{\BZ/2} \simeq \MF(\BA^{n+2}, W_{n+2})
}
$$
\end{corollary}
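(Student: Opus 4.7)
The plan is to obtain the result as the composition of two equivalences already established earlier in the paper, being careful only about the dimension indexing. First, Theorem~\ref{thm:main calc} supplies the equivalence of dg categories
$$
\mu\Sh_{\Lambda_{n+1}}^w(\Omega_{n+1}) \simeq \Coh(X_n),
$$
where $X_n = \{t_1\cdots t_{n+1} = 0\} \subset \BA^{n+1}$ is the union of coordinate hyperplanes. Since the folding operation $(-)_{\BZ/2}: \dgst_k \to \BZ/2\on{-}\dgst_k$ is functorial in the $\oo$-categorical sense, this descends directly to an equivalence of $\BZ/2$-dg categories $\mu\Sh_{\Lambda_{n+1}}^w(\Omega_{n+1})_{\BZ/2} \simeq \Coh(X_n)_{\BZ/2}$.

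Next, I would invoke Proposition~\ref{prop:mfcohequiv}, but with its internal index $n$ replaced by $n+1$. With this shift the hyperplane arrangement appearing there becomes $X_n \subset \BA^{n+1}$, matching exactly the one produced by Theorem~\ref{thm:main calc}, while the superpotential becomes $W_{n+2} = z_1 \cdots z_{n+2}$ on $\BA^{n+2}$. The resulting equivalence of $\BZ/2$-dg categories
$$
\Coh(X_n)_{\BZ/2} \simeq \MF(\BA^{n+2}, W_{n+2})
$$
is implemented, as in the proof of Proposition~\ref{prop:mfcohequiv}, by pullback along the coordinate projection that forgets $z_{n+2}$ together with Orlov's identification of matrix factorizations with the dg quotient category of singularities of $\{W_{n+2} = 0\}$.

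Chaining these two equivalences yields the corollary. There is essentially no obstacle here: the entire content sits in Theorem~\ref{thm:main calc} and Proposition~\ref{prop:mfcohequiv}, and the only step worth a moment's attention is the indexing match, namely that both the target of Theorem~\ref{thm:main calc} and the source of the shifted Proposition~\ref{prop:mfcohequiv} are $\Coh(X_n)_{\BZ/2}$ for the same hyperplane arrangement in $\BA^{n+1}$. Once this bookkeeping is noted, the composition is immediate.
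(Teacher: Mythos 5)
Your proposal is correct and matches the paper's argument exactly: the paper also obtains the corollary by composing the wrapped-microlocal-to-coherent equivalence of Theorem~\ref{thm:main calc} with Proposition~\ref{prop:mfcohequiv} applied at the shifted index. You have merely spelled out the index bookkeeping that the paper leaves implicit.
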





\end{document}